\documentclass[a4paper,oneside,11pt]{amsart}

\usepackage{amssymb,amsfonts,amsmath,amsxtra,mathtools,
mathrsfs,placeins,graphicx,verbatim,stmaryrd,hyperref,cite,color,amsthm}
\usepackage[all]{xy}
\usepackage[T1]{fontenc}
\usepackage{lmodern}

\xyoption{line}

\usepackage{cite}
\usepackage{accents}
\usepackage{amsmath,stackengine}

\makeatletter
\@namedef{subjclassname@2020}{
	\textup{2020} Mathematics Subject Classification}
\makeatother

\newtheorem{theorem}{Theorem}[section]
\newtheorem{cor}[theorem]{Corollary}
\newtheorem{lem}[theorem]{Lemma}
\newtheorem{prop}[theorem]{Proposition}

\theoremstyle{definition}

\newtheorem{defi}[theorem]{Definition}
\newtheorem{rem}[theorem]{Remark}

\numberwithin{equation}{section}

\DeclareMathOperator{\End}{End}

\DeclareMathOperator{\Hom}{Hom}
\DeclareMathOperator{\uHom}{\underline{Hom}}
\DeclareMathOperator{\oHom}{\overline{Hom}}

\DeclareMathOperator{\Map}{Map}
\DeclareMathOperator{\Ndg}{N_{dg}}
\DeclareMathOperator{\Ncoh}{N_{coh}}

\DeclareMathOperator{\Ob}{Ob}

\newcommand{\MCd}{\mathsf {MC_{dg}}}
\newcommand{\MC}{\mathsf{MC}}

\newcommand{\Ba}{\mathrm{B}}

\newcommand{\dgCat}{\mathsf{dgCat}} 
\newcommand{\dgCatM}{\mathsf{dgCat}'_{\mathrm{Mor}}}

\newcommand{\ptdco}{{\mathsf{ptdCoa}^*}}
\newcommand{\ptdcoM}{{\mathsf{ptdCoa}^*_{\mathrm{Mor}}}}
\newcommand{\ptdcono}{{\mathsf{ptdCoa}}}
\newcommand{\strptdco}{{\mathsf{ptdCoa}^{\mathrm{str}}}}
\newcommand{\grqui}{{\mathsf{grQuiv}}}

\newcommand{\sSet}{\mathsf{sSet}}
\newcommand{\qCat}{\mathsf{qCat}}
\newcommand{\sCat}{\mathsf{sCat}}

\newcommand{\Ch}{\mathsf{dgVect}}
\newcommand{\op}{^{\operatorname{op}}}
\newcommand{\MCp}[2]{\overline{\mathsf {MC}}\{#1, #2\}}

\newcommand{\HHH}{C^*_{\mathsf{HH}}}
\newcommand{\HH}{\mathsf{HH}}

\newcommand{\utilde}[1]{\widetilde{#1}}

\def\ground{\mathbf{k}}

\def\id{\operatorname{id}}

\setlength{\textwidth}{\paperwidth}
\addtolength{\textwidth}{-2in}
\calclayout

\theoremstyle{theorem}
 
\newenvironment{customthm}[1]
{\innercustomthm}
{\endinnercustomthm}

\newenvironment{customcor}[1]
{\innercustomcor}
{\endinnercustomcor}

\keywords{DG categories, coalgebras, monoidal categories, model categories, enriched categories, bar-construction, cobar-construction}
\subjclass[2020]{18N40, 18D20, 18M70}
\begin{document}
	\bibliographystyle{../hsiam2}
	\begin{abstract}
	It is well-known that the category of small dg categories $\dgCat$, though it is monoidal, does not form a monoidal model category. In this paper we construct
	a monoidal model structure on the category of pointed curved coalgebras $\ptdco$ and show that the Quillen equivalence relating it to $\dgCat$ is monoidal.  
We also show that $\dgCat$ is a $\ptdco$-enriched model category.
As a consequence, the homotopy category of $\dgCat$ is closed monoidal and is equivalent as a closed monoidal category to the homotopy category of $\ptdco$. 
	In particular, this gives a conceptual construction of a derived internal hom in $\dgCat$. 
	As an application we obtain a new description of simplicial mapping spaces in $\dgCat$ and a calculation of their homotopy groups in terms of Hochschild cohomology groups, reproducing and slightly generalizing well-known results of To\"en. Comparing our approach to To\"en's, we also obtain a description of the core of Lurie's dg nerve in terms of the ordinary nerve of a discrete category.
	\end{abstract}
	\title[Enriched Koszul duality for dg categories]{Enriched Koszul duality for dg categories}
	\author{J. Holstein}
	\address{Department of Mathematics\\
		Universit\"at Hamburg\\
		20146 Hamburg\\
		Germany	
	}
	\email{julian.holstein@uni-hamburg.de}
	
	\author{A.~Lazarev}
	\thanks{This work was partially supported by EPSRC grant EP/T029455/1}	
	\address{Department of Mathematics and Statistics\\
		Lancaster University\\
		Lancaster LA1 4YF\\United Kingdom}
	\email{a.lazarev@lancaster.ac.uk}
	\maketitle
	\tableofcontents

\section{Introduction}
The category $\dgCat$ of small differential graded (dg) categories has an internal homotopy theory, underpinned by a Quillen model structure constructed by Tabuada \cite{Tabu05} whose weak equivalences are quasi-equivalences. 
There is also a closed monoidal structure on $\dgCat$, however since the tensor product of two dg categories is not a Quillen bifunctor, e.g. the tensor product of two cofibrant dg categories is not itself cofibrant, the two structures are not compatible.
Thus, the internal hom in $\dgCat$ does not determine an internal hom in the homotopy category $Ho(\dgCat)$, even though the tensor product does lift to $Ho(\dgCat)$. Nevertheless, To\"en showed in \cite{Toen06} that $Ho(\dgCat)$ does have a closed monoidal structure, cf. also \cite{Stel15} for an alternative approach. The resulting derived internal hom  is constructed using bimodules of special kind (`quasi-representable functors'). Borrowing the terminology of algebraic geometry, this result shows that all functors between dg-categories are of Fourier-Mukai type, cf. \cite{Stel12} regarding this point of view.
For the Morita model structure on $\dgCat$ Tabuada gave a construction of internal homs by considering a category of localizing pairs \cite{Tabuada10a}.

Looking at this issue from a different angle, Kontsevich suggested that the category of unital $A_\infty$-functors  between two dg categories can be taken as the derived internal hom between them. A complete proof of this statement was obtained only recently, in \cite{Canonaco19}.

The purpose of this paper is to provide another, more structured version of the derived hom for dg categories. Our starting point is categorical Koszul duality developed in \cite{Holstein6} and which states, roughly, that the category of (small) dg categories is Quillen equivalent to the category of coalgebras of a special kind. At the same time it is known that the category of coalgebras (unlike that of algebras) does possess an internal hom making it a closed monoidal category (this point of view and its various ramifications are explained in \cite{Anel13}). One can hope, therefore, that the internal hom in coalgebras is homotopically better behaved than that in dg categories, and admits a straightforward lift to homotopy categories. Provided that Koszul duality is compatible with monoidal structures on coalgebras, this would give a derived internal hom for dg categories. 

The programme thus outlined is carried out in the present paper.
More precisely, it establishes a closed monoidal structure on the category $\ptdco$ of pointed curved coalgebras and shows that this structure is compatible with the model structure on $\ptdco$ and the Koszul adjunction to dg categories. Kontsevich's characterization of the derived internal hom in $\dgCat$ is an immediate consequence of these results. 

Note that there is a close analogy between dg-categories and $\infty$-categories: just like in dg-categories, simplicial categories do not possess a well-behaved internal hom, and this problem is resolved by replacing simplicial categories by simplicial sets with the Joyal model structure which does possess a monoidal model structure and thus, a well-behaved internal hom. In fact, this is more than an analogy; there exists a direct relationship between the differential graded and simplicial pictures, it is explained in Remark \ref{rem:diagram} below.

Recall that a coalgebra $C$ is pointed if its coradical is a direct sum of copies of the ground field $\ground$ that we fix throughout the paper. 
A pointed curved coalgebra is a curved coalgebra that is pointed and has a splitting of the coradical satisfying some compatibilities which we recall below, see Definition \ref{def:semicoalgebraworking}.
We denote by $\ptdco$ the category of pointed curved coalgebras equipped with a final object.

According to \cite{Holstein6} there is a natural model structure on $\ptdco$ and a Quillen equivalence $\Omega: \ptdco \rightleftarrows \dgCat': \Ba$ between  pointed curved coalgebras and dg categories (with the Dwyer-Kan model structure).  Here $\dgCat'$ is an equivalent model for $\dgCat$ that will be defined below. $\Omega$ and $\Ba$ are suitable versions of the cobar and bar construction.

In this setting we prove the following results:

\begin{customthm}{\ref{thm:coalgmonoidal}}
	There is a closed monoidal model structure on $\ptdco$.
\end{customthm}

Moreover the cobar construction is quasi-strong monoidal in the sense that it induces a strong monoidal functor on homotopy categories (Lemma \ref{lem:omegamonoidal}), and it induces an equivalence of monoidal category $Ho(\ptdco) \cong Ho(\dgCat)$ (Corollary \ref{cor:monoidalequivalence}).

The compatibility between $\ptdco$ and $\dgCat$ goes beyond the homotopy categories. 
While $\dgCat$ is not a monoidal model category, it is enriched, tensored and cotensored (powered) over $\ptdco$ and this structure is compatible with the model structures of $\dgCat'$ and $\ptdco$.
We say for short $\dgCat$ is a $\ptdco$-enriched model category.
\begin{customthm}{\ref{thm:cotensor}}
	The category $\dgCat'$ is a $\ptdco$-enriched model category.
\end{customthm}

Our results have immediate consequences for describing internal homs in $Ho(\dgCat)$.
\begin{customcor}{\ref{cor:dgcathom}}
	The internal hom in $Ho(\dgCat)$ may be computed as (a small modification of) the Maurer-Cartan category of a convolution category $R\uHom(D, D') \simeq \MCp {BD} {D'}$.
\end{customcor}
Unravelling definitions, this internal hom is identified with the category of unital $A_\infty$-functors as proposed by Kontsevich.

We also obtain a new description of mapping spaces in $\dgCat$, that differs from the classical description in \cite{Toen06}.

\begin{customthm}{\ref{thm:dgmapping}}
	Given two dg categories $D, D'$ the mapping space $\Map(D, D')$ is weakly equivalent to the core of $\Ndg R\uHom(D, D')$ where $\Ndg$ denotes Lurie's dg nerve.
\end{customthm}

Combining this with Toen's characterization of the mapping space, we obtain a new description of the core of the dg nerve $\Ndg$ in terms of the \emph{ordinary} nerve of the category of weak equivalences (Corollary \ref{cor:nervecore} below).

By defining a Morita model structure on $\ptdco$ we may apply our techniques also to the Morita model structure on $\dgCat$ and compute internal homs similarly.

As an application, we compute the homotopy groups of simplicial mapping spaces between dg categories in terms of Hochschild cohomology, reproducing and slightly generalizing the well-known result of To\"en \cite{Toen06}.

\begin{customthm}{\ref{thm:hhcomputation}}
	Let $G: D \to D'$ be a functor of dg categories. 
	We then have $\HH^0(D,D')^\times \cong \pi_1(\Map(D,D'), G)$ and 
	$\HH^{i}(D, D') \cong \pi_{1-i} (\Map(D,D'), G)$ for $i < 0$.
\end{customthm}

Finally, we mention that analogous results hold  for ordinary dg Koszul duality (i.e.\ for the Quillen equivalence between augmented dg algebras and conilpotent dg coalgebras). In this context, the monoidal structure on coalgebras is given by a smash-product (as opposed to a tensor product) and it lacks a monoidal unit, which brings about specific subtleties. This theory is being developed in \cite{bjor}.

\subsection{Outline}
We first recall some concepts and definitions as well as the main result of \cite{Holstein6} in the remainder of this section.
Then Section \ref{sec:resolutions} provides some auxiliary results on resolving dg categories by free dg categories and pointed curved coalgebras by cofree ones.
In Section \ref{sec:convolution} we construct the convolution category structure on homs from a pointed curved coalgebra to a dg category, see Definition \ref{defi:gradedconvolutionmonoid}. This is the main ingredient in constructing the MC category $\MCp C D$ (Definition \ref{defi:mcp}) and the closed monoidal structure on pointed curved coalgebras in Section \ref{sec:closedcoalgebras}.
The compatibility with the model structure on $\ptdco$ is provided in Section \ref{sec:monoidalmodel}.
Section \ref{sec:coalgebrasandcategories} then compares the monoidal structures on $\ptdco$ and $\dgCat'$ and shows that dg categories form a $\ptdco$-enriched model category. 
We show the same is true for the Morita model structure on $\dgCat'$ if we adjust the model structure on $\ptdco$.
We conclude with applications to Hochschild cohomology in Section \ref{sec:Hochschild}.

\subsection{Definitions, notation and conventions}
We refer to \cite{Holstein6} for a more detailed overview of the background material described below. The symbol  $\ground$ stands for a fixed ground field; $\ground$-linear Hom sets and Hom complexes will be denoted by $\Hom$ while internal hom objects will be denoted by $\uHom$. The modifier `$\ground$-linear' will usually be omitted later on as no other ground rings or fields will be considered (e.g. an `algebra' will stand for a `$\ground$-algebra' etc.) Mapping spaces in model categories will be denoted as $\Map(-,-)$.

Let $\dgCat$ be the category of \emph{differential graded (dg) categories}, considered as a model category with its Dwyer-Kan(-Bergner-Tabuada) model structure where weak equivalences are given by quasi-equivalences \cite{Tabu05}. (When considering the Morita model structure on $\dgCat$ at the end of Section \ref{sec:coalgebrasandcategories} we will make this explicit.)

We will mainly consider $\dgCat'$, the full subcategory of $\dgCat$ consisting of dg categories without zero objects together with the dg category $\bold 0$ with one object and a single zero morphism. The reason for this minor modification is that $\dgCat'$ fits better with categorical Koszul duality of \cite{Holstein6} than $\dgCat$.
The category $\dgCat'$ inherits the Dwyer-Kan model structure and the associated $\infty$-categories of $\dgCat$ and $\dgCat'$ are equivalent.

There is a natural right Quillen functor from $\dgCat$ to $\infty$-categories. We write $\qCat$ for simplicial sets with the Joyal model structure, then the \emph{dg nerve} $\Ndg: \dgCat \to \qCat$ is constructed \cite[Section 1.3.1]{Lurie11}; it was further analyzed in \cite[Section 4]{Rivera19} and \cite[Section 4]{Holstein6}. It may be restricted to $\dgCat'$.

The monoidal structure on $\dgCat$ may easily be extended to $\dgCat'$ with one adjustment: We need to define $\bold 0 \otimes D$ to be $\bold 0$ for all dg categories $D$ (the standard definition would be a dg category with the same objects as $D$ but only zero morphisms and this is not an object of $\dgCat'$).
The incluson $\dgCat' \to \dgCat$ is lax monoidal and induces a strong monoidal functor on homotopy categories. We call such functors \emph{quasi-strong monoidal}.

A \emph{curved category} is a graded category equipped with a degree 1 map $d$ on all morphism spaces satisfying the Leibniz rule, and a degree 2 curvature endomorphism $h_X$ for each object $X$ such that for $f: X \to Y$ we have $d^2f = h_Y f - f h_X$.

Let $D$ be a dg category with the set of objects $\Ob D$ and consider the coalgebra $D_0:=\ground[\Ob D]$ spanned by grouplike elements, one for every object in $D$. Then $D$ can be viewed it as a monoid in bicomodules over the cosemisimple coalgebra $D_0$. This is an example of a \emph{semialgebra}.

In particular the space of morphisms between two objects $d, d'$ is given by the cotensor product $\ground_d \boxempty_{D_0} D \boxempty_{D_0} \ground_{d'}$ where $\ground_d$ is the 1-dimensional comodule whose coaction map is induced by the inclusion of $d$ into $\Ob D$.

Let $C$ be a coalgebra. We denote by $C_0$ is its coradical, i.e.\ its maximal cosemisimple subcoalgebra and set $\overline{C}:= C/C_0$. 
We say $C$ is a \emph{pointed coalgebra} if $C_0$ is a direct sum of copies of the ground field. Such coalgebras are also called  cocomplete augmented cocategories.
We then denote the set of grouplike elements by $\Ob C$ so that $C_0 \cong \ground[\Ob C]$.

A pointed coalgebra $C$ is \emph{split} if it is equipped with a section $\epsilon: C\to C_0$ of the inclusion $C_0 \to C$.

We will now consider curved coalgebras.

As in \cite{Holstein6} many statements become easier when moving from the category of coalgebras to the opposite category of \emph{pseudocompact algebras} by taking continuous duals. Recall that a pseudocompact algebra is a topological algebra that is the projective limit of (discrete) finite-dimensional algebras.

Recall the following definition, analogous to \cite[Section 3.1]{Positselski11}.

\begin{defi}\label{def:curvedalg}
	A \emph{curved  pseudocompact algebra} $A=(A,d,h)$ is a graded  pseudocompact  algebra supplied with a derivation $d:A\to A$ (a differential) of degree 1 and an element $h\in A^2$ called the \emph{curvature} of $A$, such that $d^2(x)=[h,x]$ and $d(h)=0$ for any $x\in A$.
	
	A \emph{curved morphism} between two curved pseudocompact  algebras $A\to B$ is a pair $(f,b)$ where $f:A\to B$ is a map of graded  algebras of degree zero and $b\in B^1$ so that:
	\begin{enumerate}
		\item $f(d_Ax)=d_Bf(x)+[b,f(x)]$;
		\item $f(h_A)=h_B+d_B(b)+b^2$.
	\end{enumerate}
	Two such morphisms $(f,b)$ and $(g,c)$ are composed as $(g,c) \circ (f,b) =(g\circ f, c+g(b))$.
	In particular, every map
	$(f,b)$ can be decomposed $(f,b)=(\id,b)\circ(f,0)$.
\end{defi}

\begin{defi}\label{def:curvedcoa}
	A \emph{curved coalgebra} is a coalgebra $C$ equipped with an odd coderivation $d$ and a homogeneous linear function $h: C \to \ground$ of degree 2, called the \emph{curvature}, such that its dual $(C^*, d^*, h^*)$ is a curved pseudocompact algebra.
	
	A morphism of curved dg coalgebras from $(C, d_{C}, h_{C})$ to $(D, d_{D}, h_{D})$ is given by the data 
	$(f,a)$ where $f: C \to D$ is a morphism of graded coalgebras and $a: C \to \ground$ is a linear map of degree 1,
	such that $(f^*, a^*)$ is a curved morphisms $D^* \to C^*$.
	The composition rule is $(g,b)\circ(f,a) = (g\circ f, b \circ f + a)$.
\end{defi}

\begin{defi}\label{def:semicoalgebraworking}
	A \emph{pointed curved coalgebra} is a tuple $ (C, \Delta_{\ground}, \epsilon_{\ground}, d, h_\ground, \epsilon_C)$ such that
	\begin{itemize}
		\item $(C, \Delta_{\ground}, \epsilon_{\ground}, d, h_\ground)$ is a curved coalgebra (over $\ground$) 
		\item the restriction of $d$ to the coradical $C_0 \hookrightarrow C$ is zero,
		\item $\epsilon_C: C \to C_0$ is a a coalgebra map compatible with the differential $d$, which is left inverse to $i: C_0 \hookrightarrow C$.
	\end{itemize}
	We will often write simply $C$ or $(C, \epsilon_C)$ for $ (C, \Delta_{\ground}, \epsilon_{\ground}, d, h_\ground, \epsilon_C)$ when it does not cause confusion. 
\end{defi}
The map $\epsilon$ together with the comultiplication induce the structure of a $C_0$-bicomodule on $C$.
It follows from coassociativity that $\Delta_{\ground}$ factors as $C \xrightarrow{\Delta} C \boxempty_{C_0} C \to C \otimes C$.
Thus, $\Delta:C\to C \boxempty_{C_0} C$ and $\epsilon$ exhibit $C$ as a comonoid in $C_0$-bicomodules.
The inclusion $C_0\hookrightarrow C$ provides a coaugmentation of this comonoid.
The differential $d$ is compatible with the $C_0$-bicomodule structure and the comonoid structure given by $\Delta$ and $\epsilon$.
Note also that there is automatically a curvature $h$ with values in $C_0$, obtained by factorizing the curvature $h_{\ground}: C \to \ground$ as $\epsilon_\ground \circ (h_{\ground} \otimes \id_{C_0}) \circ \rho_{C}: C \to C \otimes C_0 \to \ground \otimes C_0 \to \ground$, where $\rho_{C}$ is the right coaction.
We define $h$ as $(h_{\ground} \otimes \id_{C_0}) \circ \rho_{C}$.

We note that in particular the zero vector space $0$ with coradical $0$ and all defining maps equal to the zero map is a pointed curved coalgebra which we will also denote by $0$.

\begin{defi}\label{def:curvedmorphism}
	A morphism $(f, a): (C, \epsilon) \to (D, \delta)$ of pointed curved coalgebras consists of
	\begin{itemize}
		\item a morphism $(f,a_\ground)$ of curved coalgebras 
		\item a factorization of $a_\ground$ as the composition $C \xrightarrow{a} D_0 \to \ground$
	\end{itemize}
	such that
	\begin{itemize}
		\item $\delta \circ f = f \circ \epsilon$,
		\item $f$ and $a$ are $D_0$-bicomodule maps,
	\end{itemize}
	where the $D_0$-bicomodule structure on $C$ is induced by the map $f: C_0\to D_0$ on coradicals induced by $f$ (this is a slight abuse of notation, but it will be clear from context what the domain of $f$ is).
	
	The composition is then defined as  
	\[
	(g, b)\circ(f,a) = (g\circ f,  b \circ f + g \circ a).
	\]
	
	The category of pointed curved coalgebras will be denoted by $\ptdcono$. 
	The same category together with a final object $*$ will be denoted by $\ptdco$.
\end{defi}

Given a pair of grouplike elements $x,y$ in a $C \in \ptdco$ 
we write $C(x,y)$ for the complex $\ground_x \boxempty_{C_0} C \boxempty_{C_0} \ground_y$, analogous to the hom space in a category.

\subsection{Recalling categorical Koszul duality}

In \cite{Holstein6} we proved that $\ptdco$ and $\dgCat'$ are Quillen equivalent using the bar and cobar constructions.

Given $C \in \ptdco$ the cobar construction $\Omega C$ is defined to have underlying graded category given by the tensor monoid $T_{C_0} \bar C[-1]$ in $C_0$-bicomodules and differential induced by differential, comultiplication and curvature.

Similarly, given a  dg category $D$ viewed as a monoid in $D_0$-bicomodules the bar construction is defined as the tensor coalgebra $T_{D_0} \bar D[1]$ where $D \cong D_0 \oplus \bar D$ is a choice of splitting of $D$ as a bicomodule. The differential and curvature are induced by differential and composition in $D$. 
Note that the splitting is not canonical and this is a source of substantial  technical difficulties (also present in the one-object situation), for the details we refer to Section 3 of \cite{Holstein6}.

The adjunction is naturally written as
\[\Hom(C, \Ba D) \cong \MC(\{\overline C, D\}) \cong \Hom(\Omega C, D)\]
where the middle term is the set of Maurer-Cartan (MC) elements in the reduced convolution category $\{\overline C, D\}$ which will be discussed in detail below, see Definitions \ref{defi:gradedconvolutionmonoid} and \ref{defi:mcp}.

We note the following special cases. Let $\bold 0$ denote the dg category with one object and only the zero morphism and $\bold \emptyset$ the empty dg category. Recall furthermore  the inital object $0$ and the final object $*$ in $\ptdco$.
Then $\Ba \bold \emptyset = 0$ and $\Omega 0 = \bold \emptyset$ by unravelling the definitions.
Furthermore we define $\Ba \bold 0 = *$ and $\Omega * = \bold 0$.

\section{Quivers, bicomodules and resolutions}\label{sec:resolutions}

We recall the category of \emph{graded $\ground$-quivers} following Keller \cite{Keller05}.
A graded $\ground$-quiver $V$ consists of a set of vertices or \emph{objects} $\Ob V$ and a graded $\ground$-module $V(x,y)$ of \emph{arrows} for every pair of objects $x,y$.

A morphism $V \to W$ consists of a map on objects $f: \Ob V \to \Ob W$ and for each pair $x,y \in \Ob V$ a morphism of arrows $V(x,y) \to W(fx, fy)$.

The category of graded  quivers will be denoted by $\grqui$.

One sees that the data of a graded quiver is equivalent to a pair $(N, C)$ where $C$ is a coalgebra of the form $\oplus \ground$ and $N$ is a graded bicomodule $N$ over it. 
One defines the correspondence by $N = \oplus_{x,y} V(x,y)$ and $C = \ground[\Ob V]$.

\begin{lem}\label{lem:closedquivers}
	There is a closed monoidal structure on $\grqui$, with $\Ob(V \otimes W) = \Ob(V) \times \Ob(W)$ and $(V \otimes W)(x,y) = V(x) \otimes V(y)$.
The internal hom is given by $\Ob \uHom(V,W) = \Hom(\Ob V, \Ob W)$ and $\Hom(f,g) = \oplus_{x,y \in \Ob V} \Hom(V(x,y), W(fx,gy))$.

\end{lem}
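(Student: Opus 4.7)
The plan is to proceed by direct verification, pivoting on the observation that once the object sets are handled by taking Cartesian products (for $\otimes$) and function sets (for $\uHom$), all remaining coherence data lives on the graded $\ground$-module components and can be deduced from the corresponding closed symmetric monoidal structure on $\Vect$.

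First I would fix the unit: the quiver $I$ with $\Ob I = \{*\}$ and $I(*,*) = \ground$ placed in degree zero. The associator and unitors for $\otimes$ are then built component-wise: on objects they are the canonical associator/unitor of Cartesian product of sets, and on the hom $\ground$-modules $V(x,y)\otimes W(x',y')$ they are the associator/unitor of the tensor product of graded vector spaces. The pentagon and triangle axioms then reduce, on each hom component and each fixed tuple of objects, to the corresponding axioms in $\Vect$, which hold; functoriality of $\otimes$ in $\grqui$ is equally immediate from functoriality of $\times$ on sets and of $\otimes$ on graded modules.

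Next I would produce the natural bijection $\Hom_{\grqui}(V\otimes W, Z)\cong \Hom_{\grqui}(V, \uHom(W,Z))$. A morphism on the left consists of a set map $\phi\colon \Ob V\times \Ob W\to \Ob Z$ together with, for each $x,y\in\Ob V$ and $x',y'\in\Ob W$, a graded map $V(x,y)\otimes W(x',y')\to Z(\phi(x,x'),\phi(y,y'))$. Currying $\phi$ on object sets produces $\tilde\phi\colon \Ob V\to \Hom(\Ob W,\Ob Z)=\Ob\uHom(W,Z)$, and the tensor-hom adjunction in $\Vect$ rewrites each graded map as $V(x,y)\to \Hom(W(x',y'),Z(\tilde\phi(x)(x'),\tilde\phi(y)(y')))$. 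Taking the direct sum over $x',y'\in \Ob W$ assembles these into a map $V(x,y)\to \uHom(W,Z)(\tilde\phi(x),\tilde\phi(y))$, i.e.\ the morphism-component data of a map $V\to \uHom(W,Z)$. Inverting this recipe by uncurrying on objects and by the inverse tensor-hom adjunction on each summand gives the inverse bijection, and naturality in $V$ and $Z$ is inherited from naturality of currying and of tensor-hom in $\Vect$.

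The argument is essentially a routine bookkeeping exercise, and I do not expect any real obstacle. The only point that requires a bit of care is aligning the two layers of adjunction simultaneously: the set-theoretic currying $\Ob V\times \Ob W\to \Ob Z\leftrightarrow \Ob V\to \Hom(\Ob W,\Ob Z)$ on object sets and the tensor-hom adjunction on the $\ground$-module hom components, together with the direct sum over $\Ob W$ used in the definition of $\uHom$. Once these are tracked in parallel, the coherence and naturality checks are formal.
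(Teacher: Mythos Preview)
Your proposal is correct and is exactly the direct verification that the paper alludes to; the paper's own proof is the single sentence ``Immediate from unravelling definitions.'' You have simply spelled out what that unravelling consists of.
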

\begin{proof}
	Immediate from unravelling definitions.
\end{proof}

We also consider the category of \emph{augmented graded $\ground$-quivers} $\grqui^{\mathrm{aug}}$ whose objects are graded quivers $V$ together with a factorization $\ground[\Ob V] \xrightarrow{\eta} V \xrightarrow{\epsilon} \ground[\Ob V]$ of the identity on $\ground[\Ob V]$, which is defined as the quiver with $V(x,y) = \ground \delta_{x,y}$.
Morphisms of augmented quivers are morphisms of quivers compatible with the augmentation maps $\eta$ and $\epsilon$.

\begin{rem}
The tensor product defined as above of two augmented quivers is naturally augmented and $\grqui^{\mathrm{aug}}$ is a closed monoidal category. The internal hom is somewhat delicate, it is described in detail in Section Section 5.1 of \cite{Keller05}.
\end{rem}

We now show that aribtrary objects in $\ptdco$ can be resolved by cofree coalgebras, and in fact by bar constructions of dg categories. This will be important later.

Let $\mathsf{ptdgrCo}$ denote the category of \emph{pointed graded coalgebras}, i.e.\ graded coalgebras $C$ with coradical $C_0$ of the form $\oplus \ground$ that are coaugmented comonoids over $C_0$.
\begin{lem}\label{lem:cofree}
	There is a forgetful-cofree adjunction
$U: \mathsf{ptdgrCo} \rightleftarrows \grqui^{\mathrm{aug}}: G$, where the cofree functor sends a quiver $Q$ to  the tensor coalgebra over $Q_0 = \ground [\Ob Q]$.
\end{lem}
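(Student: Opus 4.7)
The plan is to exhibit both functors explicitly and then establish the natural bijection of hom-sets via the standard ``projection to cogenerators'' trick for tensor coalgebras.

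First I would pin down the two functors. The forgetful functor $U$ sends a pointed graded coalgebra $C$ (decomposed as $C = C_0 \oplus \bar C$ via the implicit splitting that comes with its bicomodule structure over $C_0$) to the underlying augmented graded quiver, whose vertices are the grouplike elements and whose arrows from $x$ to $y$ are $\ground_x \boxempty_{C_0} C \boxempty_{C_0} \ground_y$; the augmentation maps are $\eta: C_0 \hookrightarrow C$ and $\epsilon: C \twoheadrightarrow C_0$. The cofree functor $G$ sends an augmented quiver $Q = Q_0 \oplus \bar Q$ to
\[
G(Q) \;=\; T_{Q_0}(\bar Q) \;=\; \bigoplus_{n \ge 0} \bar Q^{\boxempty_{Q_0} n},
\]
equipped with deconcatenation comultiplication. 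A short check shows $G(Q)$ is a pointed graded coalgebra with coradical exactly $Q_0$ (the $n=0$ summand), coaugmentation given by the inclusion, and splitting given by projection to $n = 0$.

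To establish the adjunction I would construct the natural bijection
\[
\Hom_{\mathsf{ptdgrCo}}\!\bigl(C, G(Q)\bigr) \;\cong\; \Hom_{\grqui^{\mathrm{aug}}}\!\bigl(U(C), Q\bigr)
\]
directly. In one direction, given a coalgebra map $\phi: C \to G(Q)$, its components in degrees $n = 0$ and $n = 1$ together give an augmented quiver map $U(C) \to Q_0 \oplus \bar Q = Q$ (compatibility with the augmentations $\eta,\epsilon$ is forced by the requirement that $\phi$ preserves the coradical). In the other direction, given an augmented quiver map $\psi: U(C) \to Q$, decompose it as $\psi_0: C_0 \to Q_0$ (a map of bases) and $\bar\psi: C \to \bar Q$, and define a candidate coalgebra map
\[
\tilde\psi \;=\; \sum_{n \ge 0} \bar\psi^{\boxempty_{Q_0} n} \circ \bar\Delta^{(n-1)}: C \longrightarrow G(Q),
\]
where $\bar\Delta^{(n-1)}$ denotes the $(n-1)$-fold iterated reduced comultiplication (and $n = 0$ contributes the object-level map $\psi_0 \circ \epsilon_C$). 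Verifying that $\tilde\psi$ is a coalgebra map is an essentially formal coassociativity computation, matching the fact that the deconcatenation comultiplication on $T_{Q_0}(\bar Q)$ is built from splitting tensors. The two assignments $\phi \mapsto (\text{projection to degrees }0,1)$ and $\psi \mapsto \tilde\psi$ are mutually inverse on the nose.

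The one technical point worth flagging is well-definedness of the infinite sum defining $\tilde\psi$: for each $c \in C$ one needs $\bar\Delta^{(n-1)}(c) = 0$ for $n$ sufficiently large, so that $\tilde\psi(c)$ lies in $G(Q)$ (not a completion). This is where the pointedness hypothesis does real work, via Sweedler's theorem that a pointed coalgebra over a field has exhaustive coradical filtration; equivalently, the reduced iterated comultiplication is locally nilpotent. Once this is noted, everything else — naturality of the bijection in both $C$ and $Q$, and the fact that unit and counit satisfy the triangle identities — is routine and I would relegate it to ``straightforward verification''.
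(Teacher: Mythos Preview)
Your proof is correct. The paper takes a slightly different, more compressed route: rather than constructing the bijection explicitly via the ``sum of iterated reduced comultiplications'' formula, it first decomposes both hom-sets as disjoint unions indexed by maps $f: C_0 \to Q_0$ on coradicals, and then for each fixed $f$ invokes the standard forgetful--cofree adjunction between conilpotent coalgebras and bicomodules over the fixed cosemisimple coalgebra $Q_0$. In other words, the paper reduces to the one-coradical case and treats the tensor-coalgebra adjunction there as known, whereas you unwind that adjunction by hand (including the key conilpotence check via the coradical filtration, which the paper leaves implicit in the word ``conilpotent''). Your version is more self-contained and makes explicit where pointedness is actually used; the paper's version is shorter but assumes the reader is already comfortable with the cofree property of the tensor coalgebra in bicomodules.
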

\begin{proof}
	For any fixed pair of objects $C, Q$ we have the cofree-forgetful adjunction of coalgebras in $Q_0$-bicomodules, thus we find
	$\Hom(UC, Q) = \oplus_{f: C_0 \to Q_0} \Hom_{Q_0}(f_*UC, Q)$ but as $f$ commutes with $U$ this is $$\oplus_{f: C_0 \to Q_0} \Hom_{Q_0}(Uf_*C, Q) \cong \oplus_{f: C_0 \to Q_0} \Hom_{Q_0}(f_*C, T_{Q_0}Q) \cong \Hom(C, GQ)$$ and this bijection is natural.
\end{proof}

Let $\strptdco$ denote the category of strict pointed curved coalgebras, with the same objects as $\ptdco$ but morphisms only the strict, i.e. uncurved, morphisms $f: (C, d, h) \to (C', d', h')$ given by $f: C \to C'$ compatible with differential and curvature.

\begin{lem}\label{lem:freecurvature} 
	There is a comonadic adjunction $V: \strptdco \rightleftarrows \mathsf{ptdgrCo}: H$ with left adjoint $V$ given by the functor forgetting differential and curvature. 
\end{lem}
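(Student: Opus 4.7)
The plan is to produce the right adjoint $H$ by an adjoint functor theorem argument, and then verify comonadicity using the dual of the crude form of Beck's theorem. First I would check that $V$ preserves small colimits. Given a diagram in $\strptdco$ whose underlying diagram in $\mathsf{ptdgrCo}$ has colimit $C$, the coderivations $d_i$ and curvatures $h_i$ on the pieces assemble into a unique coderivation $d$ and degree-$2$ functional $h$ on $C$, since both kinds of data transfer along compatible families of graded coalgebra maps; the axioms $d^2 = [h,-]_\Delta$ and $d(h) = 0$ (as well as the pointedness condition $d|_{C_0} = 0$) are inherited from the pieces. Both categories are locally presentable, because every pointed (graded, resp.\ curved) coalgebra is the filtered union of its finite-dimensional sub-objects stable under the relevant structure, so the special adjoint functor theorem supplies a right adjoint $H$.

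If one wants an explicit description of $H$, one can dualize to pseudocompact algebras: there $V^{\op}$ becomes the forgetful functor from strict curved pseudocompact algebras to pointed graded pseudocompact algebras, and its left adjoint freely adjoins a derivation $d$ of degree $1$ and an element $h$ of degree $2$ subject to $d^2 = [h,-]$ and $d(h) = 0$, suitably completed in the pseudocompact topology; dualizing then yields an explicit formula for $H$.

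For comonadicity, by the dual crude form of Beck's theorem it suffices to verify that $V$ is conservative and preserves equalizers of coreflexive pairs. Conservativity is immediate: a strict curved morphism whose underlying graded coalgebra map is invertible has an inverse which automatically commutes with $d$ and $h$, hence is strict curved. For equalizers, given strict curved morphisms $f,g : D \rightrightarrows D'$, let $E \subseteq VD$ be the equalizer in $\mathsf{ptdgrCo}$, i.e.\ the maximal pointed graded subcoalgebra on which $f = g$. The strictness identities $f d_D = d_{D'} f$ and $g d_D = d_{D'} g$ give $(f-g)d_D = d_{D'}(f-g)$, so $d_D(E) \subseteq \ker(f-g)$; together with the coderivation formula for $\Delta$, this forces the subcoalgebra generated by $d_D(E)$ to still equalize $f$ and $g$, hence to lie in $E$ by maximality. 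Thus $E$ is $d_D$-stable, and equipping it with the restrictions of $d_D$ and $h_D$ realizes it as the equalizer in $\strptdco$, which $V$ sends back to $E$.

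The main obstacle is really the first step: constructing $H$ or, more precisely, justifying the hypotheses of the adjoint functor theorem (colimit preservation and local presentability). Once that is in hand, comonadicity is a routine verification, the only nontrivial observation being the stability of the equalizer subcoalgebra under the coderivation, which is handled by the intertwining identity above.
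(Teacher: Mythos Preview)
Your proof is essentially correct, and your explicit dual description of $H$ (freely adjoining $d$ and $h$ to a pseudocompact algebra) is exactly what the paper does. The main difference is one of economy: the paper dualizes \emph{entirely} to the pseudocompact algebra side and stays there. On that side the free construction is immediate, and the Barr--Beck hypothesis is the one-line observation that coequalizers of strict curved pseudocompact algebras are computed as graded-algebra coequalizers equipped with the induced $d$ and $h$; dualizing then gives comonadicity of $V$. Your direct coalgebra-side argument for equalizer preservation---showing that the maximal equalizing subcoalgebra $E$ is $d$-stable via the coderivation identity and maximality---is a perfectly valid alternative, and arguably more transparent, but it is more work than the dual statement.

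One point to tighten in your AFT route: the claim that every pointed curved coalgebra is a filtered union of \emph{finite-dimensional} curved subcoalgebras is not quite right as stated. Starting from a finite-dimensional subcoalgebra $E_0$ and iterating $E_{n+1}=E_n+d(E_n)$ does produce $d$-stable subcoalgebras, but there is no reason the chain stabilizes in finite dimension. You do get countable-dimensional $d$-stable subcoalgebras, which suffices for local $\aleph_1$-presentability, so the AFT still applies; but the sentence as written needs adjustment. Since you already have the explicit construction in hand, this is moot for the existence of $H$, and the paper simply bypasses the issue by building $H$ directly.
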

\begin{proof}
	For simplicity we formulate the proof for pseudocompact algebras, which is contravariantly equivalent by dualizing.
	Thus we claim that there is a monadic adjunction $H^*: \mathsf{ptdgrpcAlg} \rightleftarrows  \mathsf{ptdcupcAlg^{\textrm{str}}}: V^*$ with the right adjoint $V^*$ forgetting differential and curvature.
	
	Given a pointed graded pseudocompact algebra $A$ we define $H^*A$ to be freely generated by an element $h$ in degree $2$ as well as differentials $da$ in degree $|a|+1$ for every $a \in A$ satisfying the rules for derivations and compatibility with the bimodule structure. Then $d^2(a) \coloneqq [h,a]$ and this is easily seen to be a left adjoint of $V^*$.
	
	It is clear that $V^*$ reflects isomorphisms, thus the adjunction is monadic by the Barr-Beck theorem if we can show $V^*$ preserves certain coequalizers. 
	In fact, $V^*$ preserves all coequalizers as coequalizers in the strict curved category are given by coequalizers in the graded category equipped with the induced differential and curvature. 
	
	Dualizing we see that $V \dashv H$ is comonadic.
\end{proof}

\begin{lem}\label{lem:coalgebracofree}
	Any pointed curved coalgebra $C$ is the equalizer of a diagram of cofree pointed curved coalgebras, i.e.\ pointed curved coalgebras of the form $HGV$ for a graded augmented quiver $V$. 
\end{lem}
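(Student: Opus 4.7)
The plan is to resolve $C$ via the two adjunctions from Lemmas \ref{lem:cofree} and \ref{lem:freecurvature} in succession. By Lemma \ref{lem:freecurvature} the adjunction $V \dashv H$ is comonadic, so the standard equalizer form of the canonical cobar resolution of a comonadic adjunction expresses any strict pointed curved coalgebra as
$$C \;=\; \operatorname{eq}\bigl(HV(C) \rightrightarrows HVHV(C)\bigr),$$
with the two parallel arrows being $\eta_{HV(C)}$ and $HV(\eta_C)$ for $\eta$ the unit of $V \dashv H$.

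Next, for each pointed graded coalgebra $A$ occurring in this resolution (namely $A = V(C)$ and $A = VHV(C)$), I would apply the analogous equalizer presentation coming from the adjunction $U \dashv G$ of Lemma \ref{lem:cofree}. That is, one verifies that $U \dashv G$ is also comonadic: the forgetful functor $U$ to augmented graded quivers reflects isomorphisms, and by the crude form of Beck's theorem one checks that $U$ creates equalizers of $U$-split parallel pairs, using that pointed graded coalgebras embed into cofree ones with the image cut out by the comultiplication. This yields $A = \operatorname{eq}(GU(A) \rightrightarrows GUGU(A))$. Since $H$ is a right adjoint it preserves equalizers, so applying $H$ presents both $HV(C)$ and $HVHV(C)$ as equalizers of objects of the form $HG(W)$.

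Splicing the two resolutions exhibits $C$ as the limit of a finite diagram in $\strptdco$ all of whose vertices are of the form $HG(W)$ for an augmented graded quiver $W$. Equivalently, and more concisely, one may argue in one stroke by observing that the composite adjunction $UV \dashv HG : \grqui^{\mathrm{aug}} \rightleftarrows \strptdco$ is itself comonadic, whereupon its canonical Beck resolution gives the cleaner presentation
$$C \;=\; \operatorname{eq}\bigl(HGUV(C) \rightrightarrows HGUV\,HGUV(C)\bigr)$$
with both terms already manifestly cofree.

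The main obstacle I expect is verifying comonadicity for $U \dashv G$ (and then, if one wishes, for the composite): running Beck's criterion for the cofree tensor coalgebra is standard but technically delicate, since equalizers in $\mathsf{ptdgrCo}$ do not coincide with equalizers of underlying vector spaces and one has to identify the correct $U$-split pairs. A secondary subtlety is that the resolution naturally lives in $\strptdco$, whereas the lemma concerns $\ptdco$; this is handled using the canonical decomposition $(f,a) = (\id, a)\circ(f,0)$ of every curved morphism into a change-of-curving composed with a strict map, which lets the universal property in $\strptdco$ be upgraded to one in $\ptdco$.
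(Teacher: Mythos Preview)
Your proposal is correct and follows essentially the same route as the paper: resolve $C$ via the comonad $K = HGUV$ coming from the composite adjunction, so that $C = \operatorname{eq}(KC \rightrightarrows KKC)$. Two small differences in execution are worth noting. First, rather than verifying comonadicity of $U \dashv G$ via Beck and then arguing separately for the composite, the paper appeals directly to a criterion for composites (Theorem~3.5.1 of Barr--Wells): a composite of comonadic adjunctions is comonadic provided the first left adjoint satisfies the \emph{crude} comonadicity theorem, which holds here since $V$ preserves all equalizers. Second, for the passage from $\strptdco$ to $\ptdco$ the paper does not use the decomposition $(f,a) = (\id,a)\circ(f,0)$ you propose, but instead invokes the explicit description of equalizers in curved coalgebras from \cite[Lemma~3.30]{Holstein6}; your decomposition argument would need more care, since $(f,0)$ lands in a twisted curved structure on the target rather than the original one.
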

\begin{proof}
	We consider $C$ as an object in $\strptdco$. From the composition of the comonadic adjunctions $\strptdco \rightleftarrows \mathsf{ptdgrCo}$ from Lemma \ref{lem:freecurvature} and $ \mathsf{ptdgrCo} \rightleftarrows \grqui$ from Lemma \ref{lem:cofree} we obtain a comonad $K = HGUV$ on $\strptdco$ and $C = \operatorname{eq}(KC \rightrightarrows KKC)$.
	
	Note that a composition of comonadic functors is not necessarily comonadic, but it is if the first functor ($V$ in our case) satisfies the crude monadicity theorem (which is true as $V$ preserves all equalizers), see Theorem 3.5.1 in \cite{Barr00}.
	We conclude by noting that this diagram is also an equalizer diagram in $\ptdco$, which follows from the construction of equalizers in curved coalgebras (or equivalently coequalizers in curved pseudocompact algebras) in the proof of Lemma 3.30 in \cite{Holstein6}.
\end{proof}
\begin{lem}\label{lem:cofreebar}
	Any cofree coalgebra arises as the bar construction of a dg category.
\end{lem}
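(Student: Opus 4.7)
The plan is to construct, for each graded augmented quiver $V$, an explicit dg category $D_V$ with $\Ba D_V \cong HGV$. First I would identify the underlying graded coalgebra of $HGV$: working dually via the contravariant equivalence with pseudocompact algebras and the description of $H^*$ in Lemma \ref{lem:freecurvature}, the underlying graded PC algebra of $H^*(GV)^*$ is freely generated over $V_0^*$ by three kinds of generators: the arrow duals $(\bar V)^*$ coming from $(GV)^*$, the differential generators $da^*$ of degree $|a^*|+1$ (one per $a^*\in (\bar V)^*$), and the curvature generators $h_x$ of degree $2$ (one per vertex $x$). Dualizing, $HGV$ has underlying graded coalgebra $T^c_{V_0}(W)$ where
\[ W \;=\; \bar V \;\oplus\; \bar V[1] \;\oplus\; \ground[\Ob V][2] \]
(with $[n]$ denoting the shift decreasing degrees by $n$), and the cofree coderivation and curvature on $HGV$ are determined by the identities $d(a^*) = da^*$, $d(da^*) = [h, a^*]$ and $d(h) = 0$ in $H^*(GV)^*$.

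Next I would define $D_V$ by $\Ob D_V = \Ob V$, $D_0 = V_0$ and $\bar D_V = W[-1]$; I call the three summands of $\bar D_V$ the \emph{arrow}, \emph{differential} and \emph{curvature} parts. The internal differential and composition of $D_V$ are prescribed by matching with the bar construction: the internal differential sends the differential part to the arrow part via the tautological identification (encoding $d(a^*) = da^*$), and the curvature part to $D_0$ via the vertex-identities (encoding the element $h$); the composition pairs the curvature part with the arrow part, and the arrow part with the curvature part with an opposite sign, into the differential part (encoding $d(da^*) = [h, a^*]$), while all other compositions vanish.

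I would then verify the dg category axioms for $D_V$: $d^2 = 0$ follows from $d(h) = 0$ and the derivation identity, the Leibniz rule is automatic from the design, and associativity holds because almost all triple products are zero by degree constraints. Finally I would compare $\Ba D_V$ with $HGV$ via the bar construction formulas. The underlying graded coalgebras agree tautologically from the choice of $\bar D_V$, and the bar differential and bar curvature (which, as a coderivation and a functional, are determined by their cogenerator components) reproduce by construction the length-one and length-two components of the cofree coderivation and the length-one curvature on $HGV$.

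The main obstacle is identifying the correct auxiliary generators in $\bar D_V$ beyond $\bar V[-1]$: the cofree coderivation of $HGV$ has a length-two component coming from $d(da^*) = [h, a^*]$ which cannot be realised as the shifted composition of a dg category whose reduced part is merely $\bar V[-1]$, for degree reasons; the differential and curvature parts in the enlarged $\bar D_V$ are precisely the additional data needed to accommodate this length-two component of the bar differential.
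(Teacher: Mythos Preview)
Your outline is correct, and in fact it repairs an oversight in the paper's own argument. The paper constructs the same underlying graded quiver $V' = V_0 \oplus \bar V[-1] \oplus \bar V \oplus V_0[1]$ with the same differential (identities $V_0[1]\to V_0$ and $\bar V\to\bar V[-1]$), but declares \emph{all} compositions among elements of $\bar D$ to be zero. That simpler choice is not a dg category: for $\eta\in V_0[1](x,x)$ and $\alpha\in\bar V[-1](x,y)$ one has $d(\eta\cdot\alpha)=d(0)=0$ while $(d\eta)\cdot\alpha-\eta\cdot(d\alpha)=1_x\cdot\alpha=\alpha$, so the Leibniz rule fails. Correspondingly, the bar construction of the paper's $D$ has vanishing length-two component in its coderivation, whereas $HGV$ carries a nonzero one, dual to the relation $d(da^*)=[h,a^*]$ in $H^*(GV)^*$ --- exactly the piece you flag as the main obstacle. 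Your nontrivial composition (curvature$\,\cdot\,$arrow $=$ differential, and its reverse) is precisely what supplies this $b_2$ and simultaneously restores Leibniz.

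One refinement: the ``opposite sign'' in $\alpha\cdot\eta$ cannot be a fixed $-1$. The Leibniz checks for the pairs $(\alpha,\eta)$ and $(\delta,\eta)$ force a degree-dependent Koszul sign, e.g.\ $\eta\cdot\alpha=\delta$ and $\alpha\cdot\eta=(-1)^{|\alpha|}\delta$. With that adjustment all remaining axioms are immediate (every triple product of elements of $\bar D_V$ vanishes, so associativity is trivial), and the identification $\Ba D_V\cong HGV$ follows as you indicate by matching the length $\le 2$ corestrictions of coderivation and curvature.
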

	\begin{proof}
		Let $V$ be a graded augmented quiver and write $V_0 = \ground[\Ob V]$ for the quiver given by a copy of the ground field at every object and $\bar V = V/V_0$ where $\Ob \bar V = \Ob V$ and the quotient is taken for each pair of objcts.
		
		Then we define a differential graded quiver $V' = V_0 \oplus \bar V[-1] \oplus \bar V \oplus V_0[1]$ with differential given by the identity on $V_0[1]$ and $\bar V$ respectively.
		We note that there is a map of quivers $V_0 \to V'$, but the natural projection is not compatible with the differential, so this is not an augmentation.
		
		Then define the dg category $D$ by setting all composition zero except that each $V_0(x,x) \cong \ground$ is the unit at $x \in \Ob V'$.
		
		It now follows from the definitions that $\Ba D \cong HGV$.
	\end{proof}

\begin{cor}\label{cor:coalgebraresolution}
	Every object of $\ptdco$ is of the form $\operatorname{eq}(\Ba D_1 \rightrightarrows \Ba D_2)$
	for suitable dg categories $D_1$ and $D_2$.
\end{cor}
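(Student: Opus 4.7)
The proof is essentially an immediate concatenation of the two preceding lemmas, so my plan is to spell out this combination carefully. Given an arbitrary $C \in \ptdco$, I would first apply Lemma \ref{lem:coalgebracofree} to express $C$ as an equalizer $\operatorname{eq}(HGV_1 \rightrightarrows HGV_2)$ in $\ptdco$, where $V_1 = UVC$ and $V_2 = UVHGUVC$ (or, more concretely, the underlying graded augmented quivers coming from the comonadic resolution $K = HGUV$ applied to $C$). Then I would invoke Lemma \ref{lem:cofreebar} to rewrite each $HGV_i$ as $\Ba D_i$ for the dg category $D_i$ produced in the proof of that lemma from the quiver $V_i$.

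The two parallel arrows in the equalizer diagram come from the comonad structure and are morphisms in $\ptdco$; importantly, I would note that we do \emph{not} need them to arise from dg functors $D_1 \rightrightarrows D_2$ via the bar construction, since the corollary only asks for an equalizer of cofree coalgebras considered as objects of $\ptdco$. This dispenses with what might otherwise look like the main obstacle, namely the potential non-fullness of $\Ba$.

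Thus the proof plan is simply:
\begin{enumerate}
\item Apply Lemma \ref{lem:coalgebracofree} to obtain $C \cong \operatorname{eq}(HGV_1 \rightrightarrows HGV_2)$ with $V_1, V_2 \in \grqui^{\mathrm{aug}}$.
\item Apply Lemma \ref{lem:cofreebar} to find dg categories $D_1, D_2$ with $\Ba D_i \cong HGV_i$ in $\ptdco$.
\item Transport the two parallel arrows across these isomorphisms to obtain the desired equalizer presentation $C \cong \operatorname{eq}(\Ba D_1 \rightrightarrows \Ba D_2)$ in $\ptdco$.
\end{enumerate}
Since both ingredients are already established, the only step requiring any thought is the cosmetic observation in step 3 about transporting the diagram, which is automatic. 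I do not foresee any technical obstacle beyond being careful that the equalizer in Lemma \ref{lem:coalgebracofree} is genuinely computed in $\ptdco$ (not merely in $\strptdco$), but this is precisely the content of the last sentence of that lemma's proof, which cites \cite{Holstein6} for the construction of equalizers in $\ptdco$.
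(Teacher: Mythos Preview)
Your approach is essentially identical to the paper's: combine Lemma \ref{lem:coalgebracofree} with Lemma \ref{lem:cofreebar} and transport the diagram along the resulting isomorphisms. The one omission is that $\ptdco$ contains, by definition, a formal final object $*$ which is \emph{not} a pointed curved coalgebra in the sense of Definition \ref{def:semicoalgebraworking}, so Lemma \ref{lem:coalgebracofree} does not apply to it; the paper handles this case separately by recalling that $* = \Ba \bold 0$ by definition, whence the trivial diagram $\Ba \bold 0 \rightrightarrows \Ba \bold 0$ suffices.
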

\begin{proof}
	For a pointed curved coalgebra  this is immediate from combining Lemma \ref{lem:coalgebracofree} and Lemma \ref{lem:cofreebar}. The final object $*$ is by definition the bar construction of the dg category with one object and only the zero morphism.
\end{proof}

A similar result holds for dg categories:
\begin{lem}\label{lem:freecategories}
	Any small dg category $D$ is of the form $\operatorname{coeq}(\Omega C_1 \rightrightarrows \Omega C_2)$ for pointed curved coalgebras $C_1, C_2$.
\end{lem}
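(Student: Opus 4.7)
The plan is to dualize the proof of Corollary~\ref{cor:coalgebraresolution}: rather than resolving a pointed curved coalgebra by cofree ones, I will resolve a dg category $D$ by \emph{free} dg categories and then observe that every free dg category is itself a cobar construction. The key building block is that for an augmented dg quiver $Q = Q_0 \oplus \bar Q$ over $Q_0 = \ground[S]$, one can form the pointed curved coalgebra $C(Q) := Q_0 \oplus \bar Q[1]$ with trivial reduced comultiplication, vanishing curvature, and differential induced by $d_Q$. Direct unravelling of the formula $\Omega C = T_{C_0} \bar C[-1]$ shows that $\Omega C(Q) = T_{Q_0} \bar Q$ with the derivation extending $d_Q$; this is precisely the free dg category on $Q$. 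Thus every free dg category lies in the image of $\Omega$.

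Next consider the forgetful functor $U$ from $\dgCat'$ to the category of augmented dg quivers, together with its left adjoint $F$, the free construction just described. I would verify via Beck's monadicity theorem that this adjunction is monadic: $U$ reflects isomorphisms (a dg functor is invertible iff it is bijective on objects and morphisms), and it creates the reflexive coequalizers needed by the theorem, as is standard for a forgetful functor from (enriched) monoids to their underlying objects, provided one is careful about how the base coradical $D_0$ varies with $D$. Monadicity immediately yields the canonical Beck presentation
\[
FUFUD \;\rightrightarrows\; FUD \;\xrightarrow{\epsilon_D}\; D,
\]
a coequalizer in $\dgCat'$ whose two parallel arrows are $\epsilon_{FUD}$ and $FU\epsilon_D$.

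Assembling the pieces, both $FUD$ and $FUFUD$ are free dg categories, so by the first step they equal $\Omega C_2$ and $\Omega C_1$ respectively for $C_2 = C(UD)$ and $C_1 = C(UFUD)$ in $\ptdco$, giving the required presentation of $D$. The distinguished dg category $\bold 0 \in \dgCat'$ is handled separately via $\bold 0 = \Omega *$, which is itself a trivial coequalizer. The main technical obstacle is the monadicity verification: because $\dgCat'$ is not literally a category of monoids in a single fixed monoidal category — the coradical $D_0$ varies from object to object — one must either argue fiberwise over the category of sets (tracking objects separately from morphisms) or else construct the relevant coequalizers in $\dgCat'$ by hand, paying particular attention to the ad hoc treatment of the zero dg category $\bold 0$ built into the definition of $\dgCat'$.
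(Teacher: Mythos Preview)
Your approach is essentially the paper's: exhibit free dg categories as cobar constructions, then use monadicity to present $D$ as a Beck coequalizer of free ones. The one substantive difference is that the paper factors the forgetful functor into two steps---first forgetting the differential to land in graded categories, then forgetting composition to land in graded \emph{reflexive} quivers---and invokes Wolff's theorem for the monadicity of the second step together with crude monadicity for the first; you instead pass in a single step to dg quivers. Your route is a bit more direct and makes the coalgebra $C(Q)$ correspondingly simpler, since the differential on $Q$ is already present rather than being freely adjoined by the left adjoint.

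There is, however, a genuine slip: your forgetful functor lands in \emph{augmented} dg quivers, i.e.\ you presuppose a functorial dg splitting $D \cong D_0 \oplus \bar D$. No such natural splitting exists---this non-canonicity is exactly what forces curvature into the bar construction elsewhere in the paper---so $U$ as you describe it is not a functor. The paper avoids this by working with \emph{reflexive} quivers (only the unit $V_0 \to V$, no retraction). Your argument repairs easily along the same lines: replace $\bar Q$ by the quotient $Q/Q_0$, set $C(Q) := Q_0 \oplus (Q/Q_0)[1]$ with trivial reduced comultiplication and curvature and differential induced from $d_Q$, and observe that the free dg category on the reflexive dg quiver $Q$ is $T_{Q_0}(Q/Q_0) \cong \Omega C(Q)$. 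With that correction the monadicity you need is the $\Ch$-enriched version of Wolff's result, and the rest of your outline goes through.
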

\begin{proof}
	We again construct a composition of monadic adjunctions.
	Forgetting the differential provides an adjunction $U: \dgCat \rightleftarrows \mathsf{grCat}: D$, which satisfies the crude monadicity theorem, as $U$ preserves all coequalizers. (Coequalizers of differential graded categories can be computed on the underlying graded category and then equipped with a suitable differential.)
	
	There is also a monadic adjunction from graded categories to graded reflexive quivers (quivers $V$ equipped with a unit $\ground[\Ob V] \to V$), see \cite{Wolff74}.
	
	Together these give a monad $T$ on $\dgCat$ and thus any dg category is the coequalizer of free dg categories.
	
	It remains to observe that any free dg category on a reflexive quiver $V$ is the cobar construction of a coalgebra $C$ obtained by equipping $V$ with the zero comultiplication on $V/V_0$. 
\end{proof}

\section{Convolution}\label{sec:convolution}

We now introduce a convolution structure on the maps between a pointed curved coalgebra and a dg category.
We will need the construction for not necessarily counital coalgebras, so let us define a \emph{noncounital pointed graded coalgebra} to be a pair $(C_0, C)$ where $C_0$ is a coalgebra of the form $\oplus \ground$ and $C$ is a graded $C_0$ bicomodule which has a coassociative comultiplication $C \to C \boxempty_{C_0} C$, which is not necessarily counital.

\begin{defi}
	A \emph{noncounital pointed curved coalgebra} is a noncounital pointed graded coalgebra that moreover has a differential $d$ and a curvature $h: C\to C_0$ such that the square of the differential is given by the coaction of the curvature.
\end{defi}

In particular any pointed curved coalgebra $C$ is noncounital pointed curved coalgebra by forgetting the counit, and the quotient $C/C_0$ is also a noncounital pointed curved coalgebra.

Given a dg category $D$ and a possibly noncounital pointed curved coalgebra $C$ we will now construct a convolution category $\{C, D\}$ as follows.
Note that unless $C$ is counital this is a non-unital category, i.e.\ a category without units.

The set of objects of $\{C, D\}$ is given by $\Hom(C_0, D_0)  \cong \Hom_{\mathbf{Set}}(\Ob C, \Ob D)$.
Given two objects $f, g$ in $\Hom(D_0, C_0)$ we make $C$ into a $D_0$-bicomodule $(f, g)_*C$ via the composition 
\[C \to C_0 \otimes C \otimes C_0 \xrightarrow{f \otimes \id_C \otimes g} D_0 \otimes C \otimes D_0.\]
Note that for $(f,f)_* C$ we will write $f_*C$.

Then we define
\[\Hom_{\{C, D\}}(f, g) = \Hom_{D_0}((f, g)_*{C}, D).\]
We note this agrees with the internal hom of quivers from Lemma \ref{lem:closedquivers}.

	To define the composition note that for fixed $f,h$ the comultiplication $\Delta: C \to C \boxempty_{C_0} C$ induces a map $(f,h)_*C \to (f,g)_*C \boxempty_{D_0} (g,h)_* C$ of $D_0$-bicomodules for each $g$, put differently $(f,h)_*(C \boxempty_{C_0} C) \subset (f,g)_*C \boxempty_{D_0} (g,h)_*C$ (by definition of the cotensor product).

The composition $\Hom_{\{C, D\}}(f, g) \otimes \Hom_{\{C, D\}}(g, h) \to \Hom_{\{C, D\}}(f, h)$, i.e.\
$ \Hom_{D_0}((f, g)_*C, D) \otimes \Hom_{D_0}((g, h)_*C, D) \to \Hom_{D_0}((f, h)_*C, D)$ is then defined by convolution:

\[\phi \circ \psi: C \xrightarrow{\Delta} C \boxempty_{C_0} C \hookrightarrow C \boxempty_{D_0} C \xrightarrow{\phi \otimes \psi} D \boxempty_{D_0} D \xrightarrow{\mu} D
\]
or, spelling out the $D_0$-bicomodule structures:
\[(f,h)_*C \xrightarrow{\Delta} (f,h)_*(C \boxempty_{C_0} C) \hookrightarrow (f,g)_*C \boxempty_{D_0} (g,h)_* C \xrightarrow{\phi \otimes \psi} D \boxempty_{D_0} D \xrightarrow{\mu} D
\]

The composition is associative as $\Delta$ and $\mu$ are (co)associative.
Thus $\{C,D\}$ is a (nonunital) monoid in $\ground[\Hom(C_0, D_0)]$-bicomodules. 

The differential on $C$ and $D$ induces a natural differential on $\{C, D\}$.
Let $C$ have curvature $h_C: C \to C_0$ and let $\eta: D_0 \to D$ be the unit map of $D$.
Then at an object $f \mapsto \eta \circ f \circ h_C: \Hom(C_0, D_0) \to \Hom(C, D)$ defines a curvature on the convolution category, compatible with the differential. 

\begin{defi}\label{defi:gradedconvolutionmonoid}
	Given a dg category $D$ and a possibly noncounital pointed curved coalgebra $C$ the \emph{convolution category}  $\{C, D\}$ is the possibly non-unital curved category of maps from $C$ to $D$, with objects $\Hom(C_0, D_0)$,  morphisms $\Hom_{\{C, D\}}(f, g) = \Hom_{D_0}((f, g)_*{C}, D)$ and composition given by the convolution product as defined above.
\end{defi}

If $C$ has a counit $\epsilon: C \to C_0$ and $D$ has a unit $D_0 \to D$ then every object $f$ has a unit $\eta \circ f \circ \epsilon$ and $\{C, D\}$ is a (unital) curved category.

We note that we may also define a curved convolution monoid if $D$ is a curved category as long as $C$ is counital, then the curvature will aquire an additional summand $f \mapsto h_D \circ f \circ \epsilon_C$.

\begin{lem}\label{lem:convolutionmonoid}
Let $C, C'$ 
be	 (possibly noncounital) pointed graded coalgebras and $D$ a graded category.
	Then there is an isomorphism of (possibly nonunital) graded convolution categories $\{C, \{C', D\}\} \cong \{C \otimes C', D\}$.
\end{lem}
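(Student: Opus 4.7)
The plan is to exhibit the isomorphism as an instance of a currying/tensor--hom adjunction, viewing $\{C',D\}$ as an internal hom for the tensor product of pointed coalgebras. I will first identify the object sets of $\{C,\{C',D\}\}$ and $\{C\otimes C',D\}$, then give an explicit bijection between morphism spaces, and finally verify compatibility with composition.

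For objects, I use that $\{C',D\}_0 = \ground[\Hom(C'_0,D_0)]$ is again of the form $\ground[S]$, so a pointed-coalgebra map $f:C_0 \to \{C',D\}_0$ is determined by a set function $\Ob C \to \Hom(C'_0,D_0)$. Currying, such data is exactly a set function $\Ob C \times \Ob C' \to \Ob D$, i.e.\ a map $(C\otimes C')_0 \to D_0$; write $\tilde f$ for the corresponding object of $\{C\otimes C',D\}$. For morphisms, fix $f,g:C_0 \to \{C',D\}_0$. The $\{C',D\}_0$-bicomodule $\{C',D\}$ decomposes as $\bigoplus_{\phi,\psi}\Hom_{D_0}((\phi,\psi)_*C',D)$, so bicolinearity forces any map $(f,g)_*C \to \{C',D\}$ to send the piece of $C$ from $c$ to $c'$ into $\Hom_{D_0}((f(c),g(c'))_*C',D)$. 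Unpacking this against the analogous bicomodule decomposition of $C\otimes C'$ shows that both sides record the same data: for all $c,c' \in \Ob C$ and $x,y \in \Ob C'$, a map $C(c,c') \otimes C'(x,y) \to D(f(c)(x), g(c')(y))$. Explicitly, the bijection is $\phi \longleftrightarrow \tilde\phi$ with $\tilde\phi(c\otimes c') = \phi(c)(c')$, and the source/target bicomodule conditions match by construction.

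Finally I will check that the bijection intertwines composition. Using Sweedler notation, composition in $\{C,\{C',D\}\}$ first applies $\Delta_C$ and then composes in $\{C',D\}$, which in turn applies $\Delta_{C'}$ and multiplies in $D$, producing $(\phi\circ\psi)(c)(c') = \phi(c_{(1)})(c'_{(1)})\cdot\psi(c_{(2)})(c'_{(2)})$. On the other side, the comultiplication $\Delta_{C\otimes C'}$ is $(\Delta_C\otimes\Delta_{C'})$ postcomposed with the middle shuffle, yielding $(\tilde\phi\cdot\tilde\psi)(c\otimes c') = \tilde\phi(c_{(1)}\otimes c'_{(1)})\cdot\tilde\psi(c_{(2)}\otimes c'_{(2)})$. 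Under the identification $\tilde\phi(c\otimes c')=\phi(c)(c')$ these two formulas agree on the nose. The main obstacle I anticipate is the careful bookkeeping of the various bicomodule structures, in particular the middle shuffle defining $\Delta_{C\otimes C'}$ and the fact that the multiplication in $\{C',D\}$ is itself a convolution; once the structures are aligned, the identification of compositions is automatic. Since counits played no role in the construction of the bijection or the composition check, exactly the same argument handles the possibly noncounital (hence possibly nonunital) case.
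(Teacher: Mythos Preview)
Your proof is correct and follows essentially the same approach as the paper: the paper invokes Lemma~\ref{lem:closedquivers} for the underlying bicomodule/quiver identification (which you spell out by hand), and then checks compatibility of the convolution products via the identical Sweedler-notation computation $\phi(c_{(1)})(c'_{(1)})\cdot\psi(c_{(2)})(c'_{(2)}) = \tilde\phi(c_{(1)}\otimes c'_{(1)})\cdot\tilde\psi(c_{(2)}\otimes c'_{(2)})$.
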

\begin{proof}
	The statement is true on the underlying bicomodules by Lemma \ref{lem:closedquivers}. 
	Next we compare the convolution products, which essentially follows by naturality of the coproduct: 
	If $f^\#, g^\#: C \to \Hom(C', D)$ are adjoint to $f, g: C \otimes C' \to D$ we apply the two compositions to $c \otimes c'$.
	We obtain $f^\#(c^{(1)})(c'^{(1)}).g^\#(c^{(2)})(c'^{(2)})$ and   $f(c^{(1)} \otimes c'^{(1)}).g(c^{(2)} \otimes c'^{(2)})$ in Sweedler notation, which of course agree.
\end{proof}

The curved version of the lemma needs an extra assumption as $\{C', D\}$ will be curved in general.

\begin{cor}\label{lem:convolutionmonoid}
	Let $C, C'$ 
	be	 (possibly noncounital) pointed curved coalgebras and $D$ a curved category.
	Then there is an isomorphism of (possibly nonunital) curved convolution categories $\{C, \{C', D\}\} \cong \{C \otimes C', D\}$ in either of the following three cases:
	\begin{itemize}
		\item  $C$ is counital, $D$  has no curvature,
		 \item $C'$ and $D$ have no curvature,
		\item $C$ and $C'$ are both counital.
	\end{itemize}
\end{cor}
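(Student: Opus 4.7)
The plan is to build on the preceding graded version of the lemma, which already provides the isomorphism $\Phi\colon \{C, \{C', D\}\} \to \{C \otimes C', D\}$ at the level of underlying graded bicomodules, objects, and composition. It therefore remains to check that $\Phi$ is compatible with differentials and with curvatures in each of the three cases.

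First I would verify that $\Phi$ is a chain map. The differential on a convolution category $\{E, T\}$ at a morphism $\varphi\colon (f,g)_*E \to T$ has the form $d\varphi = d_T \circ \varphi - (-1)^{|\varphi|} \varphi \circ d_E$. The differential of $\{C, \{C', D\}\}$ nests two such expressions, while on the other side one has the single differential induced by $d_{C \otimes C'} = d_C \otimes \mathrm{id} + \mathrm{id} \otimes d_{C'}$ (with Koszul signs). Unravelling in Sweedler notation following the template of the graded case shows the two formulas agree; this step is formal and case-independent.

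Second, I would verify curvature compatibility. Recall that the curvature of $\{E, T\}$ at an object $F$ is $\eta_T \circ F \circ h_E$, plus an additional summand $h_T \circ F \circ \epsilon_E$ when $T$ is itself curved (which requires $E$ to be counital). For the iterated convolution $\{C, \{C', D\}\}$ these contributions assemble with $E = C$, $T = \{C', D\}$, where $\{C', D\}$ carries its own curvature coming from $h_{C'}$ and, if $D$ is curved, also $h_D$. On the other side, the tensor product of pointed curved coalgebras has curvature
\begin{equation*}
h_{C \otimes C'} = h_C \otimes \epsilon_{C'} + \epsilon_C \otimes h_{C'},
\end{equation*}
each summand requiring the corresponding counit.

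The three cases listed in the statement are exactly those under which the relevant counits and units exist so that both the iterated convolution curvature and the tensor product curvature are well-defined, and their respective summands correspond under $\Phi$. Concretely: in case (i) the $h_C$-summand of the outer curvature transports to $\eta_D \circ F \circ (h_C \otimes \epsilon_{C'})$ and the summand coming from $h_{\{C', D\}} = \eta_D \circ (-) \circ h_{C'}$ transports to $\eta_D \circ F \circ (\epsilon_C \otimes h_{C'})$; their sum is the desired $\eta_D \circ F \circ h_{C \otimes C'}$. Case (ii) is degenerate in that $\{C', D\}$ is entirely uncurved, so only the single $h_C$-term survives on each side. Case (iii) admits both counits and is a direct Sweedler computation including the additional $h_D$-contribution $h_D \circ F \circ (\epsilon_C \otimes \epsilon_{C'}) = h_D \circ F \circ \epsilon_{C \otimes C'}$. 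The main obstacle is the bookkeeping: tracking which counits, units, and curvatures are available and ensuring each summand transports to its image under $\Phi$. Each of the three side conditions is tailored precisely so that no mismatched or undefined term arises, after which the verification reduces to the same pattern of Sweedler calculation used in the graded case.
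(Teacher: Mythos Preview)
Your approach is correct and essentially identical to the paper's: both build on the preceding graded lemma (which already gives the isomorphism of underlying graded categories) and then verify that the differential and curvature match, noting that the three listed hypotheses are precisely what is needed for the curvatures on both sides to be defined. The paper's own proof is a two-sentence sketch (``The conditions of the corollary are exactly those needed to define the curvature on both sides. It is then straightforward to check\ldots''), and your write-up simply unpacks this sketch case by case.
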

\begin{proof}
The conditions of the corollary are exactly those needed to define the curvature on both sides.
It is then straightforward to check that the induced differential and curvature are  compatible with the adjunction and agree on both sides.
\end{proof}

\section{Closed monoidal structure on coalgebras}\label{sec:closedcoalgebras}

In order to define a closed monoidal structure on coalgebras we need to introduce the Maurer-Cartan (MC) category of a curved category.

\begin{defi}
	Given a curved category $D$ we construct its \emph{dg MC category} $\MCd(D)$ whose objects are pairs $(X, \xi)$ with $X$ an object of $D$ and $\xi$ an MC element in $\End(X)$, i.e\ $\xi$ satisfies $d\xi + \xi^2 + h_X = 0$ where $h_X$ is the curvature of $D$ at $X$.
	
The hom spaces are given by the complex of morphisms between twisted elements: 
\[
\Hom_{MC}((X, \xi), (X', \xi')) = \Hom_{D}(X, X')^{[\xi, \xi']} \coloneqq (\Hom_D(X, X'), d^{[\xi, \xi']})
\]
 where $d^{[\xi, \xi']}(f) = df + \xi'f -(-1)^{|f|} f \xi$.
\end{defi}

We also write $\End_D(X)^\xi$ for $\Hom_D(X)^{[\xi, \xi]}$.

Note that even if $D$ is curved $\MCd(D)$ is naturally a dg category.
We denote by $\MC(D)$ the set of objects of $\MCd(D)$.

\begin{defi}\label{defi:mcp}
	Given $C \in \ptdco$ and $D \in \dgCat$ we define the Maurer-Cartan category $\MCp C D$ to be the full subcategory of $\MCd(\{C, D\})$ whose objects are the objects of the nonunital dg category $\MCd(\{\overline{C}, D\})$. 
\end{defi}

Thus, an object of $\MCp C D$ consists of a map $f: \Ob C \to \Ob D$ together with a Maurer-Cartan element in $\xi \in \End_{\MCd(\{\overline{C}, D\})}(f) = \Hom_{D_0}(f_* \overline{C}, D)$.

\begin{rem}
Given a pointed curved coalgebra $C$ and a dg category $D$ definition \ref{defi:mcp} categorifies the set of MC elements in maps from $\overline{C}$ to $D$ which mediates the Koszul adjunction $\Omega \dashv \Ba$.
This leads naturally to consider the reduced convolution category $\{\overline {C}, D\}$, however as this is a non-unital category we need to consider morphisms coming from the larger (unital) category $\{C, D\}$. 
\end{rem}

The category of pointed curved coalgebras admits a symmetric monoidal structure given by the ordinary tensor product. 
Indeed, if $C, C'$ are objects in $\ptdco$ with curvature functions  $h:C\to C_0$ and $h':C'\to C'_0$ and counits $\epsilon:C\to C_0$ and $\epsilon':C'\to C'_0$ then $C\otimes  C'$ is likewise pointed curved with the coradical $\ground[\Ob C]\otimes\ground[\Ob C']$ and curvature $h\otimes \epsilon'+\epsilon\otimes h': C \otimes C' \to C_0 \otimes C_0'$.
	Furthermore we define $C \otimes * = * \otimes C \coloneqq *$.
	
We are now ready for our first main result.
\begin{theorem}\label{thm:ptdco_closed}
		The tensor product defines a closed monoidal structure on
	 $\ptdco$ where the internal hom $\uHom(C, C')$ is defined as $\uHom(C, \Ba D) = \Ba \MCp C D$ whenever $C' = \Ba D$.
	 We  also define $\uHom(C, *) = *$ and $\uHom(*, C) = 0$ if $C \neq *$.
\end{theorem}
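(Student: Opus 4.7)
The plan is to proceed in three stages: establish the symmetric monoidal structure, prove the adjunction for target $C' = \Ba D$, and extend to arbitrary $C'$ via the resolution of Corollary \ref{cor:coalgebraresolution}.

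First I would verify that the tensor product of pointed curved coalgebras, as defined above, really assembles into a symmetric monoidal structure on $\ptdco$. The monoidal unit is the ground field $\ground$ viewed as a pointed curved coalgebra with a single grouplike element and vanishing differential and curvature. Associativity, unitality, and symmetry isomorphisms are inherited from those in vector spaces, and one checks directly that the formulas for coradical, comultiplication, differential, curvature, and splitting on a tensor product are compatible under these isomorphisms. The stipulations $C \otimes * = * \otimes C = *$, $\uHom(C, *) = *$, and $\uHom(*, C) = 0$ for $C \neq *$ extend the closed monoidal structure to the final object, and the adjunction holds trivially in these degenerate cases.

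The core step is the natural adjunction
\[
\Hom_{\ptdco}(A \otimes C, \Ba D) \cong \Hom_{\ptdco}(A, \Ba \MCp C D).
\]
Via Koszul duality $\Omega \dashv \Ba$, the left-hand side becomes $\MC(\{\overline{A \otimes C}, D\})$, while the right-hand side unwraps to $\MC(\{\overline A, \MCp C D\})$, equivalently to the set of dg functors $\Omega A \to \MCp C D$. Using the splitting
\[
\overline{A \otimes C} \cong \overline A \otimes C_0 \,\oplus\, A_0 \otimes \overline C \,\oplus\, \overline A \otimes \overline C,
\]
I would match the three components of an MC element $\overline{A \otimes C} \to D$ with the data of such a dg functor: the $A_0 \otimes \overline C$ summand records a family of MC elements $\xi_a \in \{\overline C, D\}$ at $f_a$, i.e.\ the object map $a \mapsto (f_a, \xi_a) \in \Ob \MCp C D$; the remaining $\overline A \otimes C$ summands encode a bicomodule map $\overline A \to \{C, D\}$ which, after passing through the twisted differentials on morphism spaces of $\MCp C D$, becomes the functorial action of $\Omega A$. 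The MC equation on the total element then splits into the object-level MC equations, the differential compatibility of the morphism assignment, and the convolution/composition constraint. Lemma \ref{lem:convolutionmonoid} and its Corollary furnish the underlying isomorphism $\{A \otimes C, D\} \cong \{A, \{C, D\}\}$ of convolution structures, and passing to MC elements refines this into the desired bijection, naturally in $A$ and $D$.

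To handle an arbitrary target $C' \in \ptdco$, I would use Corollary \ref{cor:coalgebraresolution} to write $C' = \operatorname{eq}(\Ba D_1 \rightrightarrows \Ba D_2)$ and define
\[
\uHom(C, C') := \operatorname{eq}\bigl(\Ba \MCp C {D_1} \rightrightarrows \Ba \MCp C {D_2}\bigr),
\]
using the equalizer construction in $\ptdco$ from the proof of Lemma 3.30 of \cite{Holstein6}. Applying $\Hom_{\ptdco}(A \otimes C, -)$ and $\Hom_{\ptdco}(A, -)$ to the equalizer and invoking the already-established bar-case adjunction, together with the fact that representable functors preserve limits, produces the general adjunction; independence of the chosen resolution is then automatic by uniqueness of right adjoints. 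The principal difficulty is the middle step: matching the MC equation across the three summands of $\overline{A \otimes C}$ with the object, morphism, and composition data of a dg functor $\Omega A \to \MCp C D$, while keeping the curvature bookkeeping consistent. Here it is essential that $\MCp C D$ is a dg (not curved) category, because the curvature of $C$ has been absorbed into the MC elements labeling its objects; this is precisely what makes the relevant case of Corollary \ref{lem:convolutionmonoid} available and ensures that the curvatures on $\{A \otimes C, D\}$ and $\{A, \{C, D\}\}$ correspond correctly across the adjunction.
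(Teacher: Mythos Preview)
Your proposal is correct and follows essentially the same route as the paper. The paper packages your core step as Lemma~\ref{lem:hombar}, using exactly the decomposition you describe (stated as the two-term version $\overline{A\otimes C}\cong A_0\otimes\overline C\,\oplus\,\overline A\otimes C$ in Lemma~\ref{lem:mcproduct}) together with Corollary~\ref{lem:convolutionmonoid}, and then extends to arbitrary $C'$ via Corollary~\ref{cor:coalgebraresolution} just as you do; your remark that $\MCp{C}{D}$ being genuinely dg is what makes the relevant case of the convolution corollary apply is the implicit point behind the paper's curvature bookkeeping.
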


Note that from this definition $\uHom(C, 0) = \Ba \MCp C \emptyset = \Ba \emptyset = 0$ and $\uHom(0, \Ba D) = \Ba \MCp 0 D = \Ba \bold 0 = *$. From the latter it follows (see the proof of Theorem \ref{thm:coalgmonoidal} below) that $\uHom(0, C) = *$ for all $C$.

\begin{rem}
It is easy to see that the tensor product makes  $\ptdco$ into a monoidal category and one may check that $\otimes$ commutes with all colimits.
As $\ptdco$ is locally presentable, the existence of an internal hom as a right adjoint to the bifunctor $\otimes$ then follows. 
The explicit description that we give is more complicated but turns out to be very fruitful.
\end{rem}
We prove some lemmas before turning to the proof of Theorem \ref{thm:ptdco_closed}.
\begin{lem}\label{lem:mcproduct} 
		
Given $C, C' \in \ptdco$ and $D \in \dgCat$ there 
is a decomposition 
\[\MC(\{\overline{C \otimes C'}, D\}) \cong 
\coprod_{f\in \Hom(C_0 \otimes C'_0, D)} 
\coprod_{\phi \in \MC(\End_{\{C_0 \otimes \overline{C'}, D\} }
	(f))} 
\MC(\End_{\{\overline {C} \otimes C', D\}}(f)^\phi)
\]
where $(f,\phi)$ is an object in $\MC(\{C_0 \otimes \overline{C'}, D\})$
and the twist by $\phi$ is induced by the natural action of $\{C_0 \otimes\overline{ C'}, D\}$ on $\{\overline {C} \otimes C', D\}$.
\end{lem}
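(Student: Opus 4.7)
The proof naturally proceeds by splitting $\overline{C\otimes C'}$ along the obvious direct-sum decomposition and tracking how the MC equation distributes.

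Step 1: Decompose the coalgebra structure. I would first fix the splitting $\overline{C\otimes C'} = A\oplus B$ with $A := C_0\otimes \overline{C'}$ and $B := \overline{C}\otimes C'$, and then carry out a Sweedler-style computation of the reduced comultiplication on $\overline{C\otimes C'}$ coming from the shuffle coproduct on $C\otimes C'$. The outcome is that (i) $\bar\Delta$ sends $A$ into $A\boxempty A$, so $A$ is a noncounital subcoalgebra, and (ii) $\bar\Delta$ sends $B$ into $A\boxempty B + B\boxempty A + B\boxempty B$, so $B$ is simultaneously a coalgebra (via the $B\boxempty B$ component) and a two-sided $A$-bicomodule (via the $A\boxempty B$ and $B\boxempty A$ components). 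A parallel check on the curvature $h_{C\otimes C'}= h_C\otimes \epsilon_{C'}+\epsilon_C\otimes h_{C'}$ shows that its restriction to $A$, resp.\ $B$, is exactly the intrinsic curvature of $A$, resp.\ $B$, viewed as a noncounital pointed curved coalgebra in its own right.

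Step 2: Translate to convolution categories. For each $f\in \Hom(C_0\otimes C_0', D_0)$ the decomposition $\overline{C\otimes C'}=A\oplus B$ induces a splitting
\[
\End_{\{\overline{C\otimes C'},D\}}(f) \;\cong\; \End_{\{A,D\}}(f)\oplus \End_{\{B,D\}}(f)
\]
on the level of the underlying complexes. The convolution product on $\{\overline{C\otimes C'},D\}$ decomposes according to Step 1: it restricts to the intrinsic convolution of $\{A,D\}$, gives the intrinsic convolution of $\{B,D\}$ via the $B\boxempty B$ component, and encodes a two-sided action of $\{A,D\}$ on $\{B,D\}$ via the coactions from $\Delta_{AB}$ and $\Delta_{BA}$.

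Step 3: Decompose the MC equation. Writing $\xi = \phi + \psi$ with $\phi\in\End_{\{A,D\}}(f)$ and $\psi\in\End_{\{B,D\}}(f)$, project the MC equation $d\xi+\xi^2+h_f = 0$ onto $A$ and $B$. The $A$-component reads
\[
d\phi + \phi\ast\phi + h_f|_A = 0,
\]
i.e.\ $\phi\in\MC(\End_{\{A,D\}}(f))$. The $B$-component reads
\[
d\psi + \phi\ast\psi + \psi\ast\phi + \psi\ast\psi + h_f|_B = 0,
\]
and setting $d^\phi\psi := d\psi + \phi\ast\psi - (-1)^{|\psi|}\psi\ast\phi$ this becomes the MC equation $d^\phi\psi+\psi\ast\psi + h_f|_B = 0$ in the $\phi$-twisted convolution category $\End_{\{B,D\}}(f)^\phi$. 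The MC property of $\phi$ from the $A$-component is exactly what is needed to make the twisted curved structure on $\End_{\{B,D\}}(f)^\phi$ well-formed.

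Step 4: Conclude. The assignment $\xi\mapsto (f,\phi,\psi)$ is a bijection with inverse $(f,\phi,\psi)\mapsto \phi+\psi$, and by Step~3 it matches MC elements with triples satisfying the relevant compatibilities, yielding the claimed decomposition.

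The main obstacle will be Step 3: one has to carefully identify each convolution term produced by the components of $\bar\Delta|_B$ with the intrinsic convolution on $B$ and with the two-sided action of $\{A,D\}$ on $\{B,D\}$, and check that the signs in the shuffle coproduct on $C\otimes C'$ line up with the signs in $d^\phi$. Everything else is routine once the coalgebra-level decomposition in Step~1 is in place.
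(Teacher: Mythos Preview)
Your proposal is correct and follows essentially the same approach as the paper. The paper's proof is much terser: it records the same splitting $\overline{C\otimes C'}\cong (C_0\otimes\overline{C'})\oplus(\overline{C}\otimes C')$, phrases your Step~1--2 on the convolution side by observing directly that the product decomposes as $(\phi,\psi).(\phi',\psi')=(\phi.\phi',\,\phi.\psi'+\psi.\phi'+\psi.\psi')$ so that $\{C_0\otimes\overline{C'},D\}$ is a subsemialgebra and $\{\overline{C}\otimes C',D\}$ an ideal, and then reads off your Step~3--4 in one sentence; your more explicit treatment of curvature and signs is absorbed into this formula without comment.
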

\begin{proof}
	The decomposition $\overline{C \otimes C'} \cong C_0 \otimes \overline{C'} \oplus \overline{C} \otimes C'$ induces a bicomodule decomposition of $\{\overline{C \otimes C'}, D\}$ and the product decomposes as $(\phi,\psi).(\phi',\psi') = (\phi.\phi', \phi.\psi' + \psi.\phi' + \psi.\psi')$, i.e. $\{C_0 \otimes \overline{C'}, D\}$ is a subsemialgebra and $\{\overline{C} \otimes C', D\}$ is an ideal of the convolution category.
	In particular the second summand has an action by the first.

Thus an MC element in $\{\overline{C \otimes C'}, D\}$ is given by an object $f \in\Hom(C_0 \otimes C_0', D_0)$ together with MC elements $\phi \in \End_{\{C_0 \otimes \overline{C'}, D\} }(f) $  and $\psi \in \End_{ \{\overline{C} \otimes C', D\} }(f)^\phi$.
\end{proof}
The following lemma is the heart of constructing the closed monoidal structure. The proof looks quite complicated, but it consists mostly in unravelling notations and book-keeping in order to reduce the result to the usual tensor hom adjunction.
\begin{lem}\label{lem:hombar}
	Given $C, C' \in \ptdco$ and $D \in \dgCat$ there is a natural isomorphism in $\ptdco$: 
	\begin{equation}\label{adjunction}\Hom(C \otimes C', \Ba D) \cong \Hom(C, \uHom(C', \Ba D))
		\end{equation}
\end{lem}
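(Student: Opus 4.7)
My plan is to unwind both sides of \eqref{adjunction} using the Koszul bar--cobar adjunction $\Hom(-,\Ba D)\cong \MC(\{\overline{(-)},D\})$ and then match the two descriptions via Lemma \ref{lem:mcproduct} and the tensor--hom adjunction of convolution categories from Lemma \ref{lem:convolutionmonoid}. The degenerate cases involving the final object $*$ or the initial object $0$ are handled directly from the definitions, so I may assume throughout that $C,C'\notin\{*,0\}$.

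First I apply the Koszul adjunction to both sides. On the left
\[
\Hom(C\otimes C',\Ba D)\;\cong\;\MC\bigl(\{\overline{C\otimes C'},D\}\bigr),
\]
and, since $\uHom(C',\Ba D)=\Ba\MCp{C'}{D}$, on the right
\[
\Hom(C,\Ba\MCp{C'}{D})\;\cong\;\MC\bigl(\{\overline{C},\MCp{C'}{D}\}\bigr).
\]
So the entire task is to produce a natural bijection between these two MC-sets.

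Next I apply Lemma \ref{lem:mcproduct} to the left-hand set, obtaining a double disjoint union indexed by pairs $(f,\phi)$, where $f\in\Hom(C_0\otimes C'_0,D_0)$ and $\phi$ is an MC element of $\End_{\{C_0\otimes\overline{C'},D\}}(f)$, together with an MC element in the twisted dg endomorphism complex of $\{\overline{C}\otimes C',D\}$. Now using Lemma \ref{lem:convolutionmonoid} (in the case where the inner coalgebra $C_0$ is counital and $D$ is uncurved, which is fine because the curvature of $C'$ is absorbed by $\overline{C'}\to C'_0$) I identify
\[
\{C_0\otimes\overline{C'},D\}\;\cong\;\{C_0,\{\overline{C'},D\}\}.
\]
An object $f$ of this category is just a map $\Ob C\to\Hom(\Ob C',\Ob D)=\Ob\{\overline{C'},D\}$, and an MC element $\phi$ over $f$ decomposes as a family $\phi=(\phi_c)_{c\in\Ob C}$ with each $\phi_c\in\MC(\End_{\{\overline{C'},D\}}(f(c)))$. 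By definition of $\MCp{C'}{D}$ this is precisely the data of a map
\[
\tilde f\colon\Ob C\longrightarrow \Ob\MCp{C'}{D},\qquad c\mapsto (f(c),\phi_c),
\]
i.e.\ an object of $\{\overline{C},\MCp{C'}{D}\}$.

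It remains to identify, for each such $(f,\phi)\leftrightarrow\tilde f$, the twisted complex $\End_{\{\overline{C}\otimes C',D\}}(f)^{\phi}$ with $\End_{\{\overline{C},\MCp{C'}{D}\}}(\tilde f)$ as dg vector spaces, so that MC elements correspond. Writing out the bicomodule structures, an endomorphism of $f$ in the first category decomposes, as a $D_0$-bicomodule, into pieces $\Hom_{D_0}((f(c),f(c'))_*C',D)$ indexed by pairs $(c,c')$ of objects of $C$, weighted by $\overline{C}(c,c')$. But by the definition of $\MCp{C'}{D}$ these pieces are exactly the hom-complexes
\[
\Hom_{\MCp{C'}{D}}\bigl(\tilde f(c),\tilde f(c')\bigr)=\Hom_{D_0}\bigl((f(c),f(c'))_*C',D\bigr)^{[\phi_c,\phi_{c'}]},
\]
and summing up gives an element of $\End_{\{\overline{C},\MCp{C'}{D}\}}(\tilde f)$. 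The key point is that the differential on the first complex, twisted by $\phi$ through the action of $\{C_0\otimes\overline{C'},D\}$ on $\{\overline{C}\otimes C',D\}$, matches componentwise the twisted differential $d^{[\phi_c,\phi_{c'}]}$ making $\MCp{C'}{D}$ into a dg category. Since the convolution product on the left expression is induced from the comultiplication of $\overline{C}\otimes C'$, while the convolution product on the right is induced from the comultiplication of $\overline{C}$ and the composition in $\MCp{C'}{D}$ (which itself uses the comultiplication of $C'$), these two convolutions agree by the associativity and coassociativity computation already used in Lemma \ref{lem:convolutionmonoid}. Hence the MC equations match and we obtain the desired bijection.

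Naturality in $C$, $C'$ and $D$ follows because every step is natural: the Koszul adjunction, the decomposition of Lemma \ref{lem:mcproduct}, and the identification of iterated convolutions from Lemma \ref{lem:convolutionmonoid}. The main obstacle in the write-up is the book-keeping in the last step: carefully checking that the twist by $\phi$ in the twisted convolution complex is exactly the twist built into the hom-complexes of $\MCp{C'}{D}$, and that the induced convolution products agree term by term in Sweedler notation. Once this is done, the edge cases involving $*$ and $0$ are immediate from the conventions $\uHom(C,*)=*$, $\uHom(*,C)=0$ for $C\neq *$, $C\otimes *=*$, and $\Ba\mathbf 0=*$.
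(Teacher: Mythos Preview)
Your proposal is correct and follows essentially the same strategy as the paper's proof: rewrite both sides via the Koszul adjunction as MC sets, decompose the left-hand MC set using Lemma~\ref{lem:mcproduct}, and then identify the pieces with the right-hand side via the iterated convolution identification of Lemma~\ref{lem:convolutionmonoid}. The paper presents this in two passes (first the one-object case as a warm-up, then the general case with explicit index-by-index formulas), whereas you go directly to the general case in a more conceptual style; the paper also defers full naturality to the proof of Theorem~\ref{thm:ptdco_closed} via Yoneda, while you assert it directly. Two minor remarks: your parenthetical about the curvature of $C'$ being ``absorbed by $\overline{C'}\to C'_0$'' is not quite the right justification for applying the corollary of Lemma~\ref{lem:convolutionmonoid} (the relevant hypothesis is simply that $C_0$ is counital and $D$ is uncurved), and the case $C=0$ or $C'=0$ does not actually require separate treatment since $0$ is a genuine pointed curved coalgebra.
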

\begin{proof}
	To make the argument clearer we first consider the case where $C, C'$ and $D$ have one object. In this case we may discard the object $f: \Ob C \times \Ob C' \to \Ob D$ in our considerations.
	
By the Koszul adjunction the left hand side of (\ref{adjunction}) is the set of MC elements in the convolution algebra $\{\overline{C \otimes C'}, D\}$.
There is a decomposition of vector spaces $\{\overline{C \otimes C'}, D\} \cong \{\overline C \otimes C', D\} \times \{\overline {\ground \otimes C'}, D\}$ with $ \{\overline {\ground \otimes C'}, D\}$ a subalgebra. 
From Lemma \ref{lem:mcproduct} we obtain
\begin{eqnarray*}
	\MC(\{\overline{C \otimes C'}, D\}) 
	&\cong& \coprod_{\phi \in \MC(\{\ground \otimes \overline{C'}, D\})} \MC(\{\overline {C} \otimes C', D\}^\phi) \\
	&\cong &  \coprod_{\phi \in \MC(\{\ground \otimes \overline{C'}, D\})} 
	\MC(\{\overline {C}, \{C', D\}^{\phi'}\})
\end{eqnarray*}
where the second line follows from Lemma \ref{lem:convolutionmonoid}. 
To be precise, Lemma \ref{lem:convolutionmonoid} only identifies the underlying graded algebras. 
It remains to compare the differential and curvature on both sides. 
Here $\phi'$ is defined as the image of $\phi$ in $\{C', D\}$ under the map induced by $C' \to \overline{C'}$. Then $\phi'$ is MC and thus the twisted convolution algebra $\{C', D\}^{\phi'}$ is a dg algebra and $\{\overline {C}, \{C', D\}^{\phi'}\}$ is a well-defined curved algebra. 
Unravelling definitions matches up differentials and curvatures in the two different convolution algebras and thus the MC elements agree.

Thus we write an element of the left hand side of (\ref{adjunction}) as a pair $(\phi, \psi)$ with $\phi \in \MC(\{\ground \otimes \overline {C'}, D\})$ and  $\psi \in \MC(\{\overline C, \{C', D\}^{\phi'})$.

On the right hand side of (\ref{adjunction}) we have MC elements in $\{\overline{C}, \MCp {C'} {D}\}$, i.e.\ pairs $(X, \eta)$ where $X \in \Ob\{\overline{C}, \MCp {C'} {D})\}$ and $\eta \in \End(X)$ is MC.
Thus $X$ is an MC element of $\{ \overline{C'}, D\}$, equivalent to $\phi$ on the LHS.
The MC element $\eta$ lives in $\End(X) \cong \Hom(\overline{C}, \uHom(C', D)^{X'})$, writing $X'$ for the image of $X$. Thus after identifying $X$ and $\phi$ we may identify $\eta$ with $\psi$.

We consider arbitrary objects next. 
Again we decompose the data of an element of the left hand side of (\ref{adjunction}) first.
An object of $\uHom(\overline{C \otimes C'}, BD)$ is an MC element of $\{\overline{C \otimes C'}, D\}$, so it consists of an object $f$, given by a map $f: \Ob C \times \Ob C' \to \Ob D$, and a MC element in the endomorphism algebra of $f$, i.e.\ $\Phi \in \Hom_{D_0}(f_*\overline{C \otimes C'}, D)$ satisfying $d\Phi + \Phi^2 + h = 0$ where $h$ is the curvature induced by the curvatures of $C$ and $C'$.

As above we use Lemma \ref{lem:mcproduct} to identify $\Phi$ with a pair
\[
\phi \in \MC(\Hom_{D_0}(f_* (C_0 \otimes \overline{C'}), D), \quad
\psi \in \MC(\Hom_{D_0}(f_*({\overline C} \otimes C'), D)^\phi)
		\]
		
and consider $\phi$ and $\psi$ separately. Here we spelled out the endomorphisms of $f$ as bicomodule maps, $\End_{\{C_0 \otimes \overline{C'} ,D\}}(f) \coloneqq \Hom_{D_0}(f_* (C_0 \otimes \overline{C'}), D)$.

Using the definition of comodule maps we find:
\begin{eqnarray*}
	\phi & \in & \Hom_{D_0}(f_* (C_0 \otimes \overline{C'}), D) \\
&\cong& \bigoplus_{d_1, d_2 \in \Ob D} \bigoplus_{\substack{c_1, c_1' \\ f(c_1, c'_1) = d_1}} \bigoplus_{\substack{c_2, c'_2\\ f(c_2, c_2') = d_2}} \Hom(C_0(c_1, c_2) \otimes \overline{C'}(c_1', c_2'), D(d_1, d_2))
\\
&\cong& \bigoplus_{c \in \Ob C} \bigoplus_{d_1, d_2 \in \Ob D} \bigoplus_{\substack{c_1', c_2' \in \Ob C'\\ f(c, c'_1) = d_1, f(c, c_2') = d_2}} \Hom(\overline{C'}(c_1', c_2'), D(d_1, d_2))
\end{eqnarray*}
where we used $C_0(c_1, c_2) = \ground.\delta_{c_1, c_2}$.
The MC condition says that each $\phi(c)$ is an MC element in $\Hom_{D_0}(f^{\sharp}(c)_* \overline{C'}, D)$.
We also have
\begin{eqnarray*}
	\psi &\in& \Hom_{D_0}(f_*({\overline C} \otimes C'), D)^\phi \\
	&=& 
 \bigoplus_{d_1, d_2 \in \Ob D} \bigoplus_ {\substack{c_1, c_2 \in \Ob C, \ c_1', c_2' \in \Ob C' \\ f(c_1, c'_1) = d_1, \ f(c_2, c_2') = d_2}} 
  \Hom(\overline{C}(c_1, c_2) \otimes C'(c_1', c_2'), D(d_1, d_2))^\phi \\
  	&=& 
  \bigoplus_{d_1, d_2 \in \Ob D} \bigoplus_ {\substack{c_1, c_2 \in \Ob C, \ c_1', c_2' \in \Ob C' \\ f(c_1, c'_1) = d_1, \ f(c_2, c_2') = d_2}} 
  \Hom\left(\overline{C}(c_1, c_2),  \Hom( C'(c_1', c_2'), D(d_1, d_2))^{\phi(c_1), \phi(c_2)}\right)
\end{eqnarray*}
where we used Lemmas \ref{lem:convolutionmonoid} and \ref{lem:mcproduct} again to identify the last two lines as graded algebras and then observe that differential and curvature also match up.

On the right hand side of (\ref{adjunction}) we unravel similarly. A MC element of $\uHom(C, \Ba \MCp {C'} D))$ is given by an MC element of the convolution category $\{C,  \MCp {C'} D)\}$, i.e.\ an object $\Xi: \Ob C \to \Ob(\MCp {C'} D)$ with an MC element $\eta$ in $\End(\Xi) = \Hom_{M_0}(\Xi_* C, \MCp {C'} D)$ where $M_0$ is the coalgebra $\ground[\MCp {C'} D]$.

Thus $\Xi$ sends any $c \in \Ob C$ to a pair $(f^\sharp(c), \xi(c))$ with $f^\sharp(c): \Ob C' \to \Ob D$ an object and $\xi(c) $ an MC element 
\[\xi(c) \in \End_{\MCp {C'} D}(f^\sharp) \cong \Hom_{D_0}({f^\sharp}_* \overline{C'}, D)
\]
Here we write $f^\sharp: \Ob C \to \Hom(\Ob C', \Ob D)$ as it may be seen as the adjoint of $f: \Ob C \times \Ob C' \to \Ob D$ from the LHS.

Summing over $c$ we find that $\xi$ is equivalent to a MC element in 
\begin{eqnarray*}
\bigoplus_{c \in \Ob C} \End_{\{\overline{C'}, D\}}(f^\sharp(c)) 
&\cong& \bigoplus_{c \in \Ob C}\Hom_{D_0}((f^\sharp)_* \overline{C'}, D)\\
&\cong & 
\bigoplus_{c \in \Ob C} \bigoplus_{d_1, d_2 \in \Ob D} \bigoplus_{\substack{c_1', c_2' \in \Ob C \\ f^\sharp(c)(c'_i) = d_i'}} \Hom(\overline{C'}(c'_1, c_2'), D(d_1, d_2))
\end{eqnarray*}
which is exactly the same curved algebra that $\phi$ lives in.

For the final step we need to match up $\eta$ with $\psi$.
By definition $\eta$ is a MC element in 
\begin{eqnarray*}
	\Hom_{D_0}(\Xi_* \overline {C}, \MCp {C'} D) 
	& \cong & \bigoplus_{\zeta_1, \zeta_2 \in \Ob \MCp {C'} D} 
	\bigoplus_{\substack{c_1, c_2 \in \Ob C \\ \Xi(c_i) = \zeta_i}} 
	\Hom(\overline C(c_1, c_2), \MCp{C'} D(\zeta_1, \zeta_2)) \\
	& \cong &  
	\bigoplus_{c_1, c_2 \in \Ob C} 
	\Hom( \overline C(c_1, c_2), \Hom_{\{C', D\} } (f^\sharp(c_1), f^\sharp(c_2))^{\xi(c_1), \xi(c_2)}) \\
	& \cong & 
		\bigoplus_{c_1, c_2 \in \Ob C} 
	\Hom( \overline C(c_1, c_2), \Hom_{D_0 } ((f^\sharp(c_1), f^\sharp(c_2))_* {C'}, D)^{\xi(c_1), \xi(c_2)}) \\
		& \cong & 
	\bigoplus_{c_1, c_2 \in \Ob C}  \bigoplus_{\substack{d_1, d_2 \in \Ob D\\ c_1', c_2' \in \Ob C' \\ f^\sharp(c_i)(c_i') = d_i}} 
	\Hom\left( \overline C(c_1, c_2), \Hom ({C'}(c_1', c_2'), D(d_1, d_2))^{\xi(c_1), \xi(c_2)}\right) \\
\end{eqnarray*}
where we  
used that the hom spaces in $\MCp{C} D$ are defined in terms of the nonreduced coalgebra $C'$.

Matching $\phi$ with $\xi$ this shows that $\psi$ and $\eta$ are MC elements in isomorphic curved algebras and 
the construction of the bijection is complete.

It remains to consider the special case that $C$ is $*$.
Then the left hand side is $\uHom(*, \Ba D)$ which is $0$ unless $D =\emptyset$ in which case it equals $*$.
The right hand side is $\uHom(*, \uHom(C', \Ba D))$, which is $0$ unless $\uHom(C', \Ba D)$ is $*$, which by definition is only possible if $D = \emptyset$.
A similar argument applies if $C' = *$. As all objects here are initial or final naturality is immediate.
\end{proof}

\begin{proof}[Proof of Theorem \ref{thm:ptdco_closed}]
We observe first that the construction in Lemma \ref{lem:hombar} is functorial on the full subcategory whose objects are bar constructions of dg categories. This is immediate for the first variable, for the second variable it follows from the Yoneda embedding: it suffices to construct a map $\Hom(C, \uHom(C', BD)) \to \Hom(C, \uHom(C', BD'))$ for any map $\Ba D \to BD'$.
	But such a map is equivalent to $\Hom(C \otimes C', BD) \to \Hom(C \otimes C', BD')$, induced by functoriality of the ordinary hom.
	
Thus the theorem follows from Lemma \ref{lem:hombar} as we are able to rewrite an arbitrary pointed curved coalgebra in terms of bar constructions.
Indeed, using Corollary \ref{cor:coalgebraresolution} we write $C' = \lim_i{\Ba D_i}$ and define
\[
\uHom(C, C') \coloneqq \lim_i \uHom(C, \Ba D_i) \cong \lim_i(\Ba\MCp C {D_i}).
\]	
This satisfies the adjointness isomorphism for fixed $C'$ and thus is functorial in maps $\lim_i \Ba D_i \to \lim_i \Ba D'_i$ by the Yoneda lemma again. Naturality of adjointness follows.
\end{proof}

\begin{rem}
	Note that the compatibility with limits also holds for $\uHom(*, -)$ as $\uHom(*, \lim C_i) = \lim \uHom(*, C_i)$ is $0$ unless all $C_i$ equal *, in which case we get $\uHom(*, *)=*$.
\end{rem}
\begin{cor}
	For $C, C', C'' \in \ptdco$ we have an isomorphism of pointed curved coalgebras $\uHom(C, \Hom(C', C'')) \cong \uHom(C \otimes C', C'')$.
\end{cor}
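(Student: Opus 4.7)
The plan is to deduce this from the tensor-hom adjunction established in Theorem \ref{thm:ptdco_closed} (equivalently Lemma \ref{lem:hombar}) by a standard Yoneda argument, so no further explicit calculation with Maurer--Cartan elements is required.

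First, I would record that the tensor product on $\ptdco$ is associative (with unit $\ground$), which is immediate since the tensor product is the ordinary tensor product of coalgebras equipped with the natural curvature and counit, and the special cases involving $*$ are handled by the convention $C\otimes * = *$. Together with Theorem \ref{thm:ptdco_closed} this equips $\ptdco$ with the data of a closed monoidal category.

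Next, for an arbitrary test object $A\in\ptdco$, I would compute
\[
\Hom\bigl(A,\,\uHom(C,\uHom(C',C''))\bigr) \cong \Hom\bigl(A\otimes C,\,\uHom(C',C'')\bigr) \cong \Hom\bigl((A\otimes C)\otimes C',\,C''\bigr),
\]
applying the adjunction of Lemma \ref{lem:hombar} twice. Using associativity of $\otimes$ and applying the adjunction once more in the other direction gives
\[
\Hom\bigl((A\otimes C)\otimes C',\,C''\bigr)\cong \Hom\bigl(A\otimes(C\otimes C'),\,C''\bigr)\cong \Hom\bigl(A,\,\uHom(C\otimes C',C'')\bigr).
\]
All isomorphisms are natural in $A$, so by the Yoneda lemma in $\ptdco$ we obtain the desired isomorphism $\uHom(C,\uHom(C',C''))\cong \uHom(C\otimes C',C'')$ in $\ptdco$.

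The only potentially subtle point is that Lemma \ref{lem:hombar} is proved directly only for target of the form $\Ba D$, and extended to arbitrary $C''\in\ptdco$ via the equalizer presentation $C''=\mathrm{eq}(\Ba D_1\rightrightarrows \Ba D_2)$ from Corollary \ref{cor:coalgebraresolution}, together with the fact that $\uHom(-,-)$ was defined so as to commute with limits in the second variable (as noted in the remark following the proof of Theorem \ref{thm:ptdco_closed}, including the case involving $*$). Thus the adjunction holds for arbitrary $C''$, and the Yoneda argument above applies verbatim, giving the isomorphism as stated.
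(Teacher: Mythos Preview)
Your proof is correct and is exactly the argument the paper has in mind: the paper's proof is the single line ``This follows from Theorem \ref{thm:ptdco_closed} using the Yoneda Lemma,'' and you have simply spelled out that Yoneda argument in detail.
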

\begin{proof}
	This follows from Theorem \ref{thm:ptdco_closed} using the Yoneda Lemma.
\end{proof}

\begin{rem}
	Our construction may be compared to the internal hom of (cocomplete augmented) cocategories in terms of the internal hom of augmented quivers that is considered by Keller, see Theorem 5.3  in \cite{Keller05} and the following discussion.
	Note that \cite{Keller05} considers uncurved cocategories and uses them to express functors of augmented $A_\infty$ categories. 
	As usual in curved Koszul duality the introduction of curved coalgebras allows us to remove the augmentation.
\end{rem}

\section{A monoidal model category}\label{sec:monoidalmodel}
We next show that $\ptdco$ is in fact a monoidal model category with its model structure defined in \cite{Holstein6}.

We will first compare the monoidal structure on $\ptdco$ and $\dgCat'$. 

Let $C$ and $C'$ be pointed curved coalgebras. Let $\xi_C\in \MCp{C} {\Omega C}$ and $\xi_{C'}\in\MCp {C'} {\Omega{C'}}$ be the canonical MC elements corresponding to the identity maps $\Omega C\to\Omega C$ and $\Omega C'\to\Omega C'$.
We take their images in the curved categories $\{C, \Omega C\}$ and $\{C', \Omega C'\}$ induced by $C \to \overline{C}$ and $C' \to \overline{C'}$.
Then the element $\xi_C\otimes 1+1\otimes\xi_{C'}$ is an MC element in the curved category $\{C,\Omega C\}\otimes\{C', \Omega C'\}$. Consider the natural map of curved semialgebras
\[
m:\{C,\Omega C\}\otimes\{C', \Omega C'\}
	\to \{{C\otimes C'},\Omega C\otimes\Omega C'\}
\to \{(\overline{C \otimes C'}, \Omega C \otimes \Omega C'\}
\]
where the second map is induced by the inclusion of the kernel of the counit.
Then $m(\xi_C\otimes 1+1\otimes\xi_{C'})$ is an MC element in $\Hom(\overline{C\otimes C'},\Omega C\otimes\Omega C')$ and thus by Koszul duality, it determines a map of  dg categories
\[M:\Omega(C\otimes C')\to\Omega(C)\otimes\Omega(C').\]

\begin{lem}\label{lem:omegamonoidal}
The map $M$ defined above is a quasi-equivalence of dg categories. Thus the cobar construction is quasi-strong monoidal.
\end{lem}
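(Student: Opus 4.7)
The plan is to show that $M$ is the identity on objects and induces a quasi-isomorphism on each hom complex, which together give a quasi-equivalence. Both $\Omega(C\otimes C')$ and $\Omega C\otimes \Omega C'$ have object set $\Ob C\times\Ob C'$, and because the MC element $\xi_C\otimes 1+1\otimes\xi_{C'}$ sits at the identity object of $\{\overline{C\otimes C'},\Omega C\otimes\Omega C'\}$, the induced functor $M$ is the identity on objects. For the hom complexes I would first unpack $M$ on generators. Decompose $\overline{C\otimes C'}=(C_0\otimes \bar{C'})\oplus(\bar C\otimes C_0')\oplus(\bar C\otimes \bar{C'})$; note that $\xi_C\in\{C,\Omega C\}$ vanishes on $C_0$ (being the extension by zero of the canonical element of $\MCp{C}{\Omega C}$) while the unit $1\in\{C,\Omega C\}$ factors as $C\to C_0\hookrightarrow\Omega C$ and hence vanishes on $\bar C$. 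Thus $M$ sends $x\otimes c'\mapsto 1_x\otimes\xi_{C'}(c')$, $c\otimes x'\mapsto\xi_C(c)\otimes 1_{x'}$, and the mixed generator $c\otimes c'\mapsto 0$. Any word in $\Omega(C\otimes C')$ containing a mixed letter is annihilated, whereas a word made of pure letters is reassembled in $\Omega C\otimes\Omega C'$ as a tensor of pure words in each factor via the monoidal structure of dg categories.

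The heart of the argument is a filtration-spectral-sequence comparison. Filter $\Omega(C\otimes C')$ by the number of mixed letters per word, and equip $\Omega C\otimes\Omega C'$ with the trivial filtration concentrated in degree zero. The cobar differential applied to a mixed letter $c\otimes c'$ contains, via the coproduct on $C\otimes C'$, the split term $(c\otimes x')(x\otimes c')$, which lowers the number of mixed letters by one. This split provides an explicit contracting homotopy on the associated graded in positive filtration degree, so that in cohomology only the filtration-zero subcomplex of ``shuffled'' words survives. A Künneth-type identification (between shuffles in the tensor algebra $T_{C_0\otimes C_0'}$ and the product $T_{C_0}\bar C[-1]\otimes T_{C_0'}\bar{C'}[-1]$) then matches the filtration-zero subcomplex with $\Omega C\otimes \Omega C'$ as dg categories. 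Restricting to a fixed hom complex between a specified pair of grouplike objects and a fixed cohomological degree makes the filtration finite, so the spectral sequence converges to the desired quasi-isomorphism.

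The principal obstacle is careful book-keeping of the full cobar differential on a mixed letter: beyond the distinguished split $(c\otimes x')(x\otimes c')$, the Sweedler expansion of $\Delta_{C\otimes C'}(c\otimes c')$ contributes further terms, and the curvature $h_{C\otimes C'}=h_C\otimes\epsilon_{C'}+\epsilon_C\otimes h_{C'}$ contributes a length-decreasing term concentrated on mixed letters. One must check that these extra pieces either preserve the filtration by mixed-letter count or are absorbed by the contracting homotopy at the $E_1$-page, which amounts to a direct verification using coassociativity and the defining relation $d^2=[h,-]$. Once this is in hand, finiteness of the filtration on each fixed hom complex in each cohomological degree delivers the quasi-equivalence, and quasi-strong monoidality of $\Omega$ on homotopy categories follows formally.
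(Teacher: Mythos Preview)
Your filtration by mixed-letter count is not compatible with the cobar differential, so the spectral sequence you want does not exist. Concretely, for a mixed generator $c\otimes c'\in\bar C\otimes\bar{C'}$ the reduced comultiplication of $C\otimes C'$ contains, besides the ``split'' terms $(c\otimes x')(y\otimes c')$ and $(x\otimes c')(c\otimes y')$, also the contribution $(c_{(1)}\otimes c'_{(1)})\,(c_{(2)}\otimes c'_{(2)})$ coming from $\bar\Delta_C(c)\otimes\bar\Delta_{C'}(c')$. This last term replaces one mixed letter by two mixed letters, so it \emph{raises} the mixed count, while the split terms \emph{lower} it. Hence neither the increasing nor the decreasing filtration by mixed-letter count is preserved by $d$, and the phrase ``these extra pieces either preserve the filtration or are absorbed at $E_1$'' cannot be made precise: there is no $E_1$ to speak of. A related issue is that the ``split'' is a piece of the differential, not a chain homotopy; you have not actually written down an operator $s$ with $ds+sd=\id$ on any associated graded piece.

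Even if one repaired the filtration, your endgame still hides the real content. The map $M$ restricted to words with no mixed letters is \emph{not} an isomorphism onto $\Omega C\otimes\Omega C'$: the two pure words $(c\otimes x')(x\otimes c')$ and $(x\otimes c')(c\otimes x')$ have the same image $\pm\, c\otimes c'$. The ``K\"unneth-type identification'' you invoke is precisely the (dual) Eilenberg--Zilber theorem, which is what has to be proved. The paper's argument makes this explicit: in the uncurved case one recognises $\Omega C$, $\Omega C'$ and $\Omega(C\otimes C')$ as normalised complexes of cosimplicial objects and identifies $M$ with the dual Eilenberg--Zilber map, a classical quasi-isomorphism; the curved case is then reduced to the uncurved one via the coradical filtration, whose associated graded has no curvature. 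This is both shorter and avoids the bookkeeping you flag as the ``principal obstacle''.
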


\begin{proof}
	Viewing all categories as semialgebras it suffices to prove that $M$ induces a quasi-isomorphism.
	  
	Assume first that the pointed curved coalgebras $C,C'$ have no curvature. 
	Consider the cosimplicial complexes  $T_{C_0} C =\{C^{\boxempty_{C_0} i}\}_{i=0}^{\infty}$ and 
	$T_{C'_0} C' =\{C'^{\boxempty_{C'_0} i}\}_{i=0}^{\infty}$.
	The cosimplicial structures comes from considering the standard cosimplicial resolution $\{C^{\boxempty_{C_0} i}\}_{i=2}^{\infty}$ and cotensoring on the left and on the right with $C_0$.
	 Thus the coface map come from the comultiplication on $C$ (respectively $C'$)and the comodule coaction $C_0 \to C$.
	 The codegeneracy maps are induced by the counit. 
	 
	 Then $\Omega C$ and $\Omega C'$ are the totalizations of the normalized cochain complexes of these cosimplicial complexes : $\Omega(C)\cong N(T_{C_0}C)$, 
 	$\Omega(C')\cong N(T_{C_0'}C')$ 
 and $\Omega (C\otimes C')\cong N({T_{C_0 \otimes C'_0}C\otimes C'})$. 
 Next we observe that $T_{C_0 \otimes C'_0}C\otimes C' \cong T_{C_0}C \otimes T_{C'_0} C'$. We then claim that we can identify
 the map $M:\Omega(C\otimes C')\to\Omega(C)\otimes\Omega(C')$ with 
  the dual Eilenberg-Zilber map.
  As such it induces a quasi-isomorphism on total complexes, proving the lemma in this special case.
  
  To prove this claim we recall the dual Eilenberg-Zilber map $EZ^*: N(A \otimes B) \to N(A) \otimes N(B)$ that sends $a \otimes b \in A_n \otimes B_n$ to 
  \[\sum_{p+q = n} \sum_{(\lambda, \mu) \in Sh(p,q)} \epsilon(\lambda, \mu)\lambda^*(a) \otimes \mu^*(b)\]
  where we sum over all shuffles and $\epsilon(\lambda, \mu)$ denotes the sign.
  In the case at hand $A = T_{C_0}C$ and $B = T_{C_0'}C'$ and we can check that $EZ^*$ is an algebra map.

  Indeed, unravelling definitions we just have to match up shuffles on $n$ objects with pairs of shuffles on $k$ and on $n-k$ objects (with the correct signs), but this is a classical computation, see \S 17 in \cite{Eilenberg66}.
  
  Since $M$ is an algebra map by construction and on generators $C \otimes C'$ the maps $EZ^*$ and $M$ are easily seen to agree (both send $c \otimes e$ to $c \otimes 1 + 1 \otimes e$) we have shown the claim.
	
We turn to the general case where $C, C'$ may have curvature. We note that the coradical filtration on any pointed curved coalgebra has the property that its associated graded has no curvature. 
	Thus, consider the coradical filtrations on $C$, $C'$ and $C\otimes C'$ and the induced filtrations on $\Omega C\otimes\Omega C'$ and $\Omega(C\otimes C')$. 
	The map $M:\Omega(C\otimes C')\to\Omega(C)\otimes\Omega(C')$ is compatible with these filtrations, and thus it induces a map on associated graded. These are nothing but the cobar constructions of the associated graded coalgebras.
	By the special case proved above, this induced map is a quasi-isomorphism and so, $F$ is a quasi-isomorphism to begin with. 
	
	As usual we need to consider the special case $* \in \ptdco$, but both $*$ and $\Omega(*) = \bold 0$ are absorbing for the respective tensor product.
\end{proof}
\begin{rem}
	The argument above using the Eilenber-Zilber map (albeit in the dual setting of bar-constructions of algebras over a field) goes back to Cartan and Eilenberg, \cite[Chapter XI, 6]{CE56}
\end{rem}
\begin{cor}\label{cor:tensorwe}
	The tensor product with an arbitary pointed coalgebra preserves all weak equivalences.
	\end{cor}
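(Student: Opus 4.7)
The plan is to transport the statement across the Quillen equivalence $\Omega \dashv \Ba$ of \cite{Holstein6}, under which weak equivalences in $\ptdco$ correspond to quasi-equivalences of dg categories under $\Omega$. Fix $D \in \ptdco$ and a weak equivalence $f \colon C \to C'$; then $f \otimes \id_D$ is a weak equivalence iff $\Omega(f \otimes \id_D)$ is a quasi-equivalence in $\dgCat'$.

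First I would observe that the comparison map $M$ of Lemma \ref{lem:omegamonoidal} is natural in each variable: this is immediate from its description as the Koszul-dual of $m(\xi_C \otimes 1 + 1 \otimes \xi_D)$, since the canonical Maurer--Cartan elements $\xi_{(-)}$ are transported functorially by $\Omega$. One thus obtains a commuting square with horizontal quasi-equivalences $M_{C,D}\colon \Omega(C \otimes D) \to \Omega C \otimes \Omega D$ and $M_{C',D}\colon \Omega(C' \otimes D) \to \Omega C' \otimes \Omega D$, and vertical arrows $\Omega(f \otimes \id_D)$ and $\Omega f \otimes \id_{\Omega D}$. By two-out-of-three it then suffices to show that $\Omega f \otimes \id_{\Omega D}$ is a quasi-equivalence of dg categories.

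This last step is a standard property of the tensor product on $\dgCat$ over a field $\ground$: on hom complexes it reduces to tensoring a quasi-isomorphism by a fixed complex, which preserves quasi-isomorphism over a field; and essential surjectivity on $H^0$ is automatic since the object set of a tensor product of dg categories is the cartesian product of the object sets. The degenerate case $D = *$ is trivial because $C \otimes * = *$ forces $f \otimes \id_* = \id_*$. The main obstacle, such as it is, is the verification of naturality of $M$; beyond that the argument is pure bookkeeping against Lemma \ref{lem:omegamonoidal} and the definition of weak equivalences in $\ptdco$.
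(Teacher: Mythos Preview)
Your proof is correct and follows essentially the same approach as the paper: reduce via Lemma~\ref{lem:omegamonoidal} to showing that $\Omega f \otimes \id_{\Omega D}$ is a quasi-equivalence, which is an elementary check over a field. The paper's argument is slightly terser, phrasing the last step as ``tensoring with a cofibrant dg category preserves quasi-equivalences'' and leaving the naturality of $M$ implicit.
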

\begin{proof}
	Let $E, C, C'$ be pointed curved coalgebras and $C \to C'$ a weak equivalence. Then we claim $E \otimes C \to E \otimes C'$ is also a weak equivalence. By definition we need to check $\Omega(E \otimes C) \simeq \Omega(E \otimes C')$, or, by Lemma \ref{lem:omegamonoidal}, $\Omega(E) \otimes \Omega(C) \simeq \Omega(E) \otimes \Omega(C')$.
	
	Thus the result follows if tensoring with a cofibrant dg category preserves quasi-equivalences. 
	But this is readily verified, both quasi-full faitfulness and quasi-essential surjectivity are easy to check.
\end{proof}
\begin{theorem}\label{thm:coalgmonoidal}
	The category $\ptdco$ is a monoidal model category.
\end{theorem}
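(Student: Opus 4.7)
The plan is to verify the two defining axioms of a monoidal model category: the unit axiom and the pushout-product axiom. The monoidal unit is the ground field $\ground \in \ptdco$, viewed as a pointed curved coalgebra with a single grouplike element and no further structure (note that $\Omega\ground$ is the monoidal unit of $\dgCat'$). I would first argue that $\ground$ is cofibrant in the model structure of \cite{Holstein6}; this makes the unit axiom automatic. If $\ground$ turns out not to be cofibrant, Corollary \ref{cor:tensorwe} gives an immediate rescue: for any cofibrant replacement $Q\ground \to \ground$ the map $Q\ground \otimes X \to \ground \otimes X = X$ is a weak equivalence for every $X$.

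The real work is the pushout-product axiom. I would split it as follows: given cofibrations $f:A\to B$ and $g:C\to D$ in $\ptdco$, show (i) that $f\square g$ is a cofibration, and (ii) that if moreover one of $f,g$ is a weak equivalence, then so is $f\square g$. Step (ii) reduces to step (i) essentially for free, using Corollary \ref{cor:tensorwe} and left properness of $\ptdco$: suppose $f$ is a trivial cofibration and consider the pushout defining the domain of $f\square g$. By Corollary \ref{cor:tensorwe} both $f\otimes \id_C$ and $f\otimes\id_D$ are weak equivalences, and $\id_A\otimes g$ should be a cofibration (a special case of (i) with $A\to A$ replaced by $0\to A$; more carefully, tensoring a cofibration with any object is the pushout-product with the map $0\to(\text{object})$, so one has to track this carefully). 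Left properness then promotes the pushout leg $A\otimes D \to (A\otimes D)\cup_{A\otimes C}(B\otimes C)$ to a weak equivalence, and two-out-of-three applied to the commutative triangle with $A\otimes D \xrightarrow{\sim} B\otimes D$ completes (ii). Left properness itself should be provable either by direct inspection of the generating cofibrations in \cite{Holstein6} or by transfer along $\Omega$, since weak equivalences in $\ptdco$ are detected by $\Omega$ and $\dgCat$ is known to be left proper.

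The crux is therefore step (i). I would reduce to generating cofibrations via the small object argument: since the tensor product preserves colimits in each variable (pointed out in the remark after Theorem \ref{thm:ptdco_closed}), it suffices to check that the pushout-product of two generating cofibrations of $\ptdco$ is again a cofibration, or at worst a retract of a transfinite composition of pushouts of generating cofibrations. Using the explicit description of generating cofibrations in \cite{Holstein6} (essentially built from cofree coalgebras on simple quivers, in line with Lemma \ref{lem:coalgebracofree} and Lemma \ref{lem:cofreebar}), together with the flatness of every $\ground$-vector space, I expect the check to be a direct combinatorial verification: tensoring with a pointed curved coalgebra preserves the relevant injections, and pushout-products of cellular inclusions remain cellular.

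The main obstacle is precisely step (i). One is tempted to transfer the pushout-product axiom through the Quillen equivalence $\Omega \dashv \Ba$ using Lemma \ref{lem:omegamonoidal}, but this fails at the starting line because $\dgCat$ itself does not satisfy the pushout-product axiom; that is the very motivation of the paper. So the verification genuinely must take place in $\ptdco$, exploiting the fact that the tensor product on coalgebras is simply the underlying $\ground$-linear tensor product, where flatness and the combinatorial simplicity of grouplike-spanned coradicals make the generating cofibrations behave well under $\square$. Once (i) is in hand, (ii) and the unit axiom are essentially formal consequences of Corollary \ref{cor:tensorwe}.
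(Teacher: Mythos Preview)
Your outline is broadly sound, but it diverges from the paper in two places, and in one of them you are making life harder than necessary.

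For step (i) you propose reducing to generating cofibrations and doing a combinatorial check. The paper's argument is much shorter because of a fact you seem not to have used: in the model structure on $\ptdco$ from \cite{Holstein6} the cofibrations are \emph{exactly} the injective maps. With this, the pushout-product of two cofibrations is handled by a one-line computation: the pushout $(E\otimes C')\amalg_{E\otimes C}(E'\otimes C)$ has underlying graded augmented quiver given by the ordinary pushout (this is what Lemma~3.30 of \cite{Holstein6} provides), and over a field the canonical map from that pushout into $E'\otimes C'$ is visibly injective. No small object argument or analysis of generating cofibrations is needed. Your description of the generating cofibrations as ``built from cofree coalgebras on simple quivers'' is not how \cite{Holstein6} presents them, and the route through Lemmas~\ref{lem:coalgebracofree}--\ref{lem:cofreebar} is a detour.

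For step (ii) your left-properness argument is correct (left properness of $\ptdco$ is indeed established in \cite[Proposition~3.33]{Holstein6}) and is a legitimate alternative. The paper argues differently: it applies $\Omega$, which as a left Quillen functor sends the pushout square to a pushout of cofibrations in $\dgCat'$, hence to a homotopy pushout; then Lemma~\ref{lem:omegamonoidal} identifies each vertex with a tensor product of cobar constructions, and Corollary~\ref{cor:tensorwe} (in the form ``tensoring with a cofibrant dg category preserves quasi-equivalences'') shows one leg is a weak equivalence, so the comparison map is too. Your worry that ``transfer through $\Omega$ fails because $\dgCat$ is not monoidal model'' applies to the cofibration clause, not to the acyclic clause: the paper never invokes a pushout-product in $\dgCat$, only that tensoring with a fixed cofibrant dg category preserves quasi-equivalences, which is elementary. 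Both approaches to (ii) rest on Corollary~\ref{cor:tensorwe}; yours is the textbook left-proper reduction, the paper's stays closer to the Koszul equivalence.

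For the unit axiom you are right that it is immediate from Corollary~\ref{cor:tensorwe}; in fact since cofibrations are injections every object is cofibrant, so $\ground$ is cofibrant and the axiom is vacuous.
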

\begin{proof}
	Recall the model structure on pointed curved coalgebras from \cite{Holstein6}.
The cofibrations are given by injections and a map $f: C \to C'$ is a weak equivalence exactly if $\Omega(f)$ is a quasi-equivalence.

	It remains to show that this is compatible with the closed monoidal structure from Theorem \ref{thm:ptdco_closed}. 
	The unit axiom follows from Corollary \ref{cor:tensorwe} as tensor product preserves all weak equivalences.

	It remains to check the pushout-product axiom, i.e.\ let $E \to E'$ and $C\to C'$ be cofibrations in $\ptdco$. We first check that $(E \otimes C') \amalg_{E \otimes C} (E' \otimes C) \to E' \otimes C'$ is a cofibration.
	Indeed, rewriting the pushout as a coequalizer it follows from the description of coequalizers in Lemma 3.30 of \cite{Holstein6}
	that the coequalizer of any two parallel arrows with a cone has underlying space the coequalizer in graded augmented quivers. This suffices to show that the canonical map to $E' \otimes C'$ is injective, and thus a cofibration.
	
	Next we assume that $C \to C'$ is, moreover, a weak equivalence. 
	By the first part (or by inspection) $E \otimes C \to E \otimes C'$ and $E \otimes C \to E' \otimes C$ are cofibrations.
	As $\Omega$ preserves cofibrations we see that $\Omega(E \otimes C') \amalg_{\Omega(E \otimes C)} \Omega(E' \otimes C) $ is in fact a homotopy colimit, and weakly equivalent to the homotopy colimit $(\Omega C \otimes \Omega E') \amalg^h_{\Omega C \otimes \Omega E} (\Omega C' \otimes \Omega E)$. 
	But the latter is weakly equivalent to $\Omega C \otimes \Omega E'$ as $\Omega C \otimes \Omega E \to \Omega C' \otimes \Omega E$ is a weak equivalence.
Thus the canonical map to $\Omega (C' \otimes E') \simeq \Omega C' \otimes \Omega E'$ is also a weak equivalence.
\end{proof}
\begin{rem}
	It is well-known that $\dgCat$ is not a monoidal model category. As $\Omega$ is only quasi-monoidal there is no contradiction to $\ptdco$ being a monoidal model category.
\end{rem}

In order to consider mapping spaces, we recall an adjunction between simplicial sets with the Joyal model structure (which we denote $\qCat$ for quasi-categories) and pointed curved coalgebras.
Define $F(C):= \Hom_{\ptdco}(\tilde C_*(\Delta^\bullet), C)$ where $C$ is a a pointed curved coalgebra, $K$ is a simplicial set and   $\tilde C(K)$ is the twisted chain coalgebra of $K$ (the detailed definition of is found in Section 4 of \cite{Holstein6}).
Then there is a Quillen adjunction $\tilde C_*: \qCat \leftrightarrows : \ptdcono: F$.

To consider mapping spaces we will be interested in the 'maximal subgroupoid' of an $\infty$-category.

\begin{defi}
	We call the maximal Kan subset of a quasi-category $K$ the \emph{core} of $K$.
\end{defi}
It is clear that the core is right adjoint to the inclusion of Kan complexes into quasi-categories (weak Kan complexes).

\begin{cor}\label{cor:coalgmapping}
Let $C,C'$ be pointed curved coalgebras.	Then their mapping space $\Map(C,C')$ in the model category $\ptdco$ is weakly equivalent to the core of the $\infty$-category $F\uHom(C, C')$.
\end{cor}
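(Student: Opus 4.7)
The plan is to transport the computation of $\Map_{\ptdco}(C,C')$ through the Quillen adjunction $\tilde C_*\dashv F$, using the monoidal model structure of Theorem~\ref{thm:coalgmonoidal}, and identify it with the Dwyer--Kan mapping space from $\Delta^0$ to $F\uHom(C,C')$ computed in $\sSet$ with the Joyal model structure, which is well-known to be the core of that quasi-category.

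First I would choose a cosimplicial frame $J^\bullet$ on $\Delta^0$ in the Joyal model structure (such frames always exist, e.g.\ by Hirschhorn's construction, with each $J^n$ a categorical equivalence of $\Delta^0$, so that each $J^n$ is an $\infty$-groupoid). Since $\tilde C_*$ is left Quillen and $\tilde C_*(\Delta^0)$ is the monoidal unit of $\ptdco$, and since tensoring with any pointed curved coalgebra preserves weak equivalences by Corollary~\ref{cor:tensorwe} (while the pushout--product axiom guarantees that tensoring with a cofibrant $C$ preserves cofibrancy), the cosimplicial object $C\otimes\tilde C_*(J^\bullet)$ is a cosimplicial frame on $C$ in $\ptdco$ whenever $C$ is cofibrant.

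Using this frame and a fibrant replacement $C'^f$ of $C'$, the tensor--hom adjunction for $\uHom$ together with $\tilde C_*\dashv F$ gives levelwise
\[
\Map(C,C')_n \;=\; \Hom_{\ptdco}\!\bigl(C\otimes\tilde C_*(J^n),\,C'^f\bigr) \;\cong\; \Hom_{\sSet}\!\bigl(J^n,\,F\uHom(C,C'^f)\bigr),
\]
which is exactly the $n$-th simplicial degree of the Dwyer--Kan mapping space $\Map_{\sSet_{\mathrm{Joyal}}}(\Delta^0,F\uHom(C,C'^f))$. The pushout--product axiom (Theorem~\ref{thm:coalgmonoidal}) ensures that $\uHom(C,C'^f)$ is fibrant in $\ptdco$, and since $F$ is right Quillen, $F\uHom(C,C'^f)$ is a quasi-category. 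I would then invoke the standard fact that in the Joyal model structure the Dwyer--Kan mapping space from $\Delta^0$ into a quasi-category $K$ is weakly equivalent to $\mathrm{core}(K)$ (this amounts to the observation that mapping an $\infty$-groupoid such as $J^n$ into $K$ detects precisely the invertible simplices), yielding the desired identification.

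The main obstacle I anticipate is the technical verification that $C\otimes\tilde C_*(J^\bullet)$ really is a cosimplicial frame on $C$, namely the Reedy-cofibrancy check (which reduces to Reedy cofibrancy of $\tilde C_*(J^\bullet)$ via the pushout--product) together with verifying that each structure map $C\otimes\tilde C_*(J^0)\to C\otimes\tilde C_*(J^n)$ is a weak equivalence. Once this is in place, the remainder is a formal manipulation of the two adjunctions and the monoidal model structure established earlier.
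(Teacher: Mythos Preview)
Your approach is correct and essentially identical to the paper's: both tensor $C$ with $\tilde C_*$ applied to a Joyal cosimplicial frame on the point, then pass through the closed monoidal structure and the adjunction $\tilde C_*\dashv F$ to obtain $\Map_{\qCat}(\ast,F\uHom(C,C'))\simeq\mathrm{core}\,F\uHom(C,C')$. The only difference is cosmetic: the paper uses the explicit frame $E^n=N(\text{contractible groupoid on }[n])$ (citing Dugger--Spivak), and the Reedy-cofibrancy check you flag as the main obstacle is dispatched there by exactly your argument, namely that left Quillen functors preserve latching objects and cofibrations.
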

\begin{proof}

	We define a cosimplicial simplicial set $E^\bullet$ by letting $E^n$ be the nerve of the groupoid  with object set $[n]$ and one arrow connecting each pair of objects.
	By Section 4.1 of \cite{Dugger11}
	this is  a Reedy cosimplicial resolution of the point in the Joyal model structure i.e.\ a cosimplicial resolution of the trivial $\infty$-category.
		
	We claim that tensoring with $\tilde C_*(E^\bullet)$ gives a cosimplicial resolution in $\ptdco$.
	Indeed as $\tilde C_*$ and the tensor product are left Quillen, they preserve colimits (and thus latching objects) and cofibrations, and thus Reedy cofibrant objects.
	
	We compute 
	\begin{eqnarray*}
	\Map(C, C') &\cong &\Hom_\ptdco(C \otimes \tilde C_*(E^\bullet), C') \\
	& \cong & \Hom_\ptdco(\tilde C_*(E^\bullet), \uHom(C, C')) \\
	& \cong & \Hom_\sSet(E^\bullet, F\uHom(C,C'))
\end{eqnarray*}
by the closed monoidal structure of $\ptdco$. But the expression in the final line above is, by construction, the core of $F\uHom(C,C')$. (We may also view it as $\Map_{\qCat}(*, F\uHom(C, C'))$, which is known to be the core.)
\end{proof}

\begin{rem}\label{rem:diagram}
	We recall from \cite[Section 4.2]{Holstein6} the relation of Koszul duality and the coherent nerve $\Ncoh$ construction sending simplicial categories to quasi-categories. 
	There is the following diagram, where the inner and outer square commute up to homotopy.
	
	\[	\xymatrix@=32pt{
		\qCat\ar@<-0.5ex>[r]_{\mathfrak C}
		\ar@<-0.5ex>[d]_{\utilde C_*}
		&\sCat\ar@<-0.5ex>[l]_{\Ncoh}
		\ar@<-0.5ex>[d]_{G_*}\\
		\ptdco \ar@<-0.5ex>[r]_{\Omega}
		\ar@<-0.5ex>[u]_{F}
		&\dgCat'\ar@<-0.5ex>[l]_{\Ba} \ar@<-0.5ex>[u]_{H}
	}
	\] 
	Here the horizontal arrows are $\infty$-equivalences and downward arrows are induced by normalized chain functors, the upwards maps are right adjoints (on the level of homotopy categories).
	We also showed in \cite{Holstein6} that the dg nerve $\Ndg$, the natural functor from $\dgCat'$ to $\qCat$ which is equivalent to $\Ncoh \circ H$, factors through $\Ba$.
	
	On the left hand side of the above diagram we have simplicial sets with the Joyal model structure and pointed curved coalgebras, which both possesss monoidal model structures. In contrast, simplicial categories or dg categories on the right hand side do not have monoidal model structures. 
	
	Note that, moreover, $\tilde C_*$ has a lax monoidal structure given by the Eilenberg-Zilber map together with the isomorphism $\tilde C_*(K) \cong C_*(K)$ of curved coalgebras. 
	Thus $\tilde C_*$ also has a lax closed structure.
	This closed structure is far from being quasi-strong. Indeed, restricting to the subcategory of spaces in $\qCat$ it is clear that the map $\tilde C_*\Map(X,Y) \to \uHom(\tilde C_*X, \tilde C_*Y)$ does not become an isomorphism in the homotopy category: 
	$\uHom(\tilde C_*X, \tilde C_*Y)$ ignores the homotopy cocommutative nature of chain coalgebras and so cannot faithfully reflect maps between the corresponding spaces. 
\end{rem}

\section{Coalgebras and dg categories}\label{sec:coalgebrasandcategories}

\begin{cor}\label{cor:monoidalequivalence}
	There is an equivalence of closed monoidal model categories between $Ho(\ptdco)$ and $Ho(\dgCat)$. 
\end{cor}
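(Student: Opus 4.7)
The strategy is to transport the closed monoidal structure on $Ho(\ptdco)$ (provided by Theorem~\ref{thm:coalgmonoidal}) across the derived Koszul equivalence, using the fact that $\Omega$ is quasi-strong monoidal. The proof is then essentially formal: a strong monoidal equivalence of categories transports internal homs, and the content has already been isolated in Lemmas \ref{lem:omegamonoidal} and Theorem~\ref{thm:coalgmonoidal}.

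The plan is to proceed in four steps. First, the Quillen equivalence $\Omega : \ptdco \rightleftarrows \dgCat' : \Ba$ from \cite{Holstein6} descends to an equivalence of homotopy categories $L\Omega : Ho(\ptdco) \xrightarrow{\simeq} Ho(\dgCat')$ with quasi-inverse $R\Ba$. Second, I invoke Lemma~\ref{lem:omegamonoidal} together with Corollary~\ref{cor:tensorwe}: because $\otimes$ on $\ptdco$ preserves all weak equivalences, the derived tensor product on $Ho(\ptdco)$ is represented by the underived one, and the comparison map $\Omega(C \otimes C') \to \Omega C \otimes^{\mathbb{L}} \Omega C'$ constructed there is a quasi-equivalence. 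This promotes $L\Omega$ to a strong monoidal functor between the homotopy categories.

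Third, by Theorem~\ref{thm:coalgmonoidal} the monoidal model structure on $\ptdco$ endows $Ho(\ptdco)$ with a closed monoidal structure, whose internal hom is the right derived functor of $\uHom$ constructed in Theorem~\ref{thm:ptdco_closed}. Given a strong monoidal equivalence of categories into a closed monoidal category, one defines an internal hom on the target by transport of structure, that is $R\uHom_{\dgCat'}(D,D') := L\Omega\bigl(R\uHom_{\ptdco}(R\Ba D, R\Ba D')\bigr)$, and the adjunction isomorphisms follow from those on the coalgebra side together with the monoidal equivalence. Thus $L\Omega$ becomes an equivalence of closed monoidal categories $Ho(\ptdco) \simeq Ho(\dgCat')$. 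Fourth, the inclusion $\dgCat' \hookrightarrow \dgCat$ is quasi-strong monoidal and induces an equivalence of underlying homotopy categories (as noted in the conventions subsection), and one checks that the transported internal hom on $Ho(\dgCat')$ corresponds to a closed monoidal structure on $Ho(\dgCat)$ compatible with the derived tensor product there. Composing the two equivalences yields the desired equivalence of closed monoidal categories.

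The only nontrivial point is the second step: one must verify that quasi-strong monoidality of $\Omega$ at the pointset level genuinely yields a strong monoidal structure on $L\Omega$, which amounts to checking that the structural coherences commute with passage to homotopy categories. This is a standard but mildly technical check enabled by Corollary~\ref{cor:tensorwe} (so no cofibrant replacement is needed on the coalgebra side) and by the fact that $\Omega$ sends cofibrant objects to cofibrant dg categories, so $\Omega C \otimes \Omega C'$ already represents the derived tensor product. Once this is in place, the coherence diagrams (associator, unitors, pentagon) are inherited automatically since they hold on the nose in $\ptdco$ and are transported by isomorphisms in $Ho(\dgCat)$.
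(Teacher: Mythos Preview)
Your plan is correct and follows essentially the same route as the paper's own proof: invoke the Quillen equivalence from \cite{Holstein6} to get the equivalence of homotopy categories, use Lemma~\ref{lem:omegamonoidal} to see that $L\Omega$ is strong monoidal, and then deduce closedness formally. The paper compresses your steps two through four into the single phrase ``and thus also closed by general principles,'' but the content is the same; your additional care about coherences, the r\^ole of Corollary~\ref{cor:tensorwe}, and the passage from $\dgCat'$ to $\dgCat$ simply unpacks what the paper leaves implicit.
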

\begin{proof}
	There is an equivalence of categories $Ho(\ptdco)$ and $Ho(\dgCat)$ by \cite{Holstein6}.
	This equivalence is strong monoidal by Lemma \ref{lem:omegamonoidal} and thus also closed by general principles.
\end{proof}

Note that in the statement of the corollary we could replace $\dgCat'$ by the usual $\dgCat$ as the two have equivalent homotopy categories (and the natural inclusion $\dgCat' \to \dgCat$ is quasi-strong monoidal).

In fact, before passing to homotopy categories there is a very strong relation between the monoidal model category $\ptdco$ and the model category $\dgCat$.
 
Recall that given  a model category $\mathcal M$ and a monoidal model category $\mathcal C$ we say $\mathcal M$ is a \emph{$\mathcal C$-enriched model category} \cite[Definition A.3.1.5]{Lurie11a} if $\mathcal M$ is enriched in $\mathcal C$, tensored and cotensored over $\mathcal C$ and moreover the tensor action $ \tilde \otimes : \mathcal C \times \mathcal M \to \mathcal M$ is a left Quillen bifunctor (which ensures that enrichment and cotensor are also compatible with the model structures).
This is a natural generalisation of the notion of simplicial model category.

\begin{theorem}\label{thm:cotensor}
		The category $\dgCat'$ is a $\ptdco$-enriched model category. 
\end{theorem}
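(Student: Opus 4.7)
The plan is to transfer the closed monoidal structure on $\dgCat'$ across the Koszul adjunction $\Omega\dashv\Ba$ in order to obtain a strict $\ptdco$-enrichment on $\dgCat'$, and then to verify that the resulting tensor action is a left Quillen bifunctor.

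First, I would set up the enrichment, tensor, and cotensor explicitly by
\[
\underline{\dgCat'}(D,D')\ :=\ \Ba\bigl(\uHom_{\dgCat'}(D,D')\bigr),\qquad C\,\tilde\otimes\,D\ :=\ \Omega C\otimes_{\dgCat'}D,\qquad D^{C}\ :=\ \uHom_{\dgCat'}(\Omega C,D).
\]
Composition in the enrichment is built from the lax monoidal constraint $\Ba X\otimes\Ba Y\to\Ba(X\otimes Y)$, adjoint to the quasi-strong monoidal structure on $\Omega$ from Lemma~\ref{lem:omegamonoidal}, together with $\Ba$ applied to composition in $\dgCat'$; the unit uses the natural identification $\Ba\mathbf{1}\cong\ground$ together with $\id_D\in\uHom_{\dgCat'}(D,D)$. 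The tensor--hom--cotensor adjointness
\[
\Hom_{\dgCat'}(C\,\tilde\otimes\,D,D')\ \cong\ \Hom_{\dgCat'}\bigl(\Omega C,\uHom_{\dgCat'}(D,D')\bigr)\ \cong\ \Hom_{\ptdco}\bigl(C,\underline{\dgCat'}(D,D')\bigr)\ \cong\ \Hom_{\dgCat'}(D,D'^{\,C})
\]
then follows immediately from closedness of $\dgCat'$ combined with the Koszul adjunction. The associativity/unit coherences of the $\ptdco$-action reduce to the corresponding coherences on $\dgCat'$ together with those of the lax monoidal structure on $\Ba$.

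The substantive work is then in checking that $\tilde\otimes$ is a left Quillen bifunctor. Given cofibrations $i:C\hookrightarrow C'$ in $\ptdco$ and $j:D\to D'$ in $\dgCat'$, one needs the pushout-product
\[
i\,\square\, j\ :\ (\Omega C'\otimes D)\amalg_{\Omega C\otimes D}(\Omega C\otimes D')\ \longrightarrow\ \Omega C'\otimes D'
\]
to be a cofibration in $\dgCat'$, and a trivial one whenever $i$ or $j$ is. Since $\Omega$ is left Quillen, $\Omega i$ is itself a cofibration. The critical extra input is that, by the cellular analysis coming out of Section~\ref{sec:resolutions} (in particular Corollary~\ref{cor:coalgebraresolution} and Lemma~\ref{lem:freecategories}), $\Omega i$ is a transfinite composition of pushouts of the generating cofibrations of $\dgCat'$ that adjoin objects and free morphisms. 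For generating cofibrations of this form, the tensor product on $\dgCat'$ interacts with pushouts in a controlled way and the pushout-product with an arbitrary cofibration remains a cofibration; this is the one-sided monoidal compatibility that does hold on semi-free cofibrations even though $\dgCat'$ itself is not a monoidal model category. For the weak equivalence case I would combine Corollary~\ref{cor:tensorwe} (tensor with any pointed curved coalgebra preserves weak equivalences in $\ptdco$) with Lemma~\ref{lem:omegamonoidal} and the Koszul equivalence to propagate the equivalence through to the pushout-product in $\dgCat'$.

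The principal obstacle is precisely this pushout-product axiom: the tensor of dg categories famously fails to be a left Quillen bifunctor in full generality, and the entire content of the theorem is that this failure disappears once one variable is pulled back along $\Omega$. My strategy hinges on exploiting the cellular (semi-free) nature of $\Omega i$ for coalgebra cofibrations $i$, reducing the verification to a check against the generating cofibrations of $\dgCat'$ adjoining objects and free morphisms — and this is where the resolution results of Section~\ref{sec:resolutions}, together with the bar-construction realization of cofree coalgebras (Lemma~\ref{lem:cofreebar}), contribute most directly.
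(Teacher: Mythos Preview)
Your definitions do not cohere into a strict $\ptdco$-enrichment. The tensoring condition for an enriched category requires an isomorphism in $\ptdco$
\[
\underline{\dgCat'}(C\,\tilde\otimes\,D,D')\ \cong\ \uHom_{\ptdco}\bigl(C,\underline{\dgCat'}(D,D')\bigr),
\]
and with your choices the left side is $\Ba\uHom_{\dgCat}(\Omega C,E)$ while the right side is $\uHom_{\ptdco}(C,\Ba E)\cong\Ba\MCp{C}{E}$, where $E=\uHom_{\dgCat}(D,D')$. These agree only up to weak equivalence (this is precisely Corollary~\ref{cor:dgcathom}), not on the nose; equivalently, the associativity $(C\otimes C')\,\tilde\otimes\,D\cong C\,\tilde\otimes\,(C'\,\tilde\otimes\,D)$ would force $\Omega(C\otimes C')\otimes D\cong \Omega C\otimes\Omega C'\otimes D$, which fails strictly. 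The paper avoids this by \emph{not} using $\Ba\uHom_{\dgCat}$ and the $\dgCat$-tensor at all: it sets $\oHom(\Omega C',D):=\Ba\MCp{C'}{D}$, $D^{C}:=\MCp{C}{D}$, and $C\,\tilde\otimes\,\Omega C':=\Omega(C\otimes C')$, extended to arbitrary $D$ via the coequalizer presentation of Lemma~\ref{lem:freecategories}. All the compatibilities then reduce tautologically to the closed monoidal structure on $\ptdco$ (Theorem~\ref{thm:ptdco_closed}), with no appeal to an internal hom in $\dgCat'$.

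Your argument for the Quillen bifunctor condition also does not go through as written. The results of Section~\ref{sec:resolutions} that you invoke (Corollary~\ref{cor:coalgebraresolution}, Lemma~\ref{lem:freecategories}) produce (co)equalizer presentations, not cell decompositions, so they do not exhibit $\Omega i$ as a transfinite composite of pushouts of generating cofibrations. More seriously, the assertion that the pushout-product of a semi-free cofibration with an arbitrary cofibration in $\dgCat$ is again a cofibration is exactly where the monoidal model structure on $\dgCat$ fails, and you give no argument for why the special shape of $\Omega i$ rescues it. The paper's route is different and cleaner: since the generating (acyclic) cofibrations of $\dgCat'$ already lie in the image of $\Omega$ (proof of \cite[Proposition~3.33]{Holstein6}), and since the paper's tensor is $C\,\tilde\otimes\,\Omega C'=\Omega(C\otimes C')$, the entire pushout-product $f\boxempty g$ is $\Omega$ applied to a pushout-product in $\ptdco$, which is a (trivial) cofibration by Theorem~\ref{thm:coalgmonoidal}. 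No analysis of $\otimes_{\dgCat}$ is needed.
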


\begin{proof}
	We define  the external hom of a coalgebra $C$ and dg category $D$ as $\overline{\Hom}(\Omega C, D) \coloneqq \Ba \MCp C D$ and extend to all dg categories by writing a dg category as a colimit of cobar constructions using Lemma \ref{lem:freecategories}.

	We then define the cotensoring $D^C \coloneqq \MCp C D$ and the tensoring by $C \tilde \otimes \Omega C' \coloneqq \Omega(C \otimes C')$ for dg categories in the image of $\Omega$, extended by colimits to all dg categories. 
	
	In both cases functoriality follows from the Yoneda lemma as in the proof of Theorem \ref{thm:ptdco_closed}. E.g.\ for fixed $C, D$ any map $\Omega C' \to \Omega C''$ induces a map $\Hom(\Omega C'', \MCp {C} {D}) \to \Hom(\Omega C', \MCp {C} D)$' and thus $\Hom(\Omega(C'' \otimes C), D) \to \Hom(\Omega(C' \otimes C), D)$.
		
	To show compatibility for the cotensoring we need to check that ${\oHom}(D', D^C) \cong \uHom(C, \overline{\Hom}(D', D))$. Assume first $D' = \Omega C'$.
	We have
	\begin{eqnarray*}
		\uHom(C, \oHom(\Omega C', D)) 
		&\cong& \uHom(C, \Ba \MCp {C'} D) \\
		&\cong& \uHom(C, \uHom(C', \Ba D)) \\
		&\cong& \uHom(C \otimes C', \Ba D) \\
		&\cong& \uHom(C', \uHom(C, \Ba D)) \\
		&\cong& \uHom(\Omega C', \MCp C D)
	\end{eqnarray*}
	and the last term is $\oHom(D, D^C))$ by definition.
	The general case follows by applying Lemma \ref{lem:freecategories} and observing that the construction on both sides is compatible with colimits as in the proof of Theorem \ref{thm:ptdco_closed}.

	The compatibility condition of the tensor product is $\overline{\Hom}(C \tilde \otimes D', D) \cong \uHom(C, \overline{\Hom}(D', D))$. 
	Again it suffices to show this if $D' = \Omega C'$.
	Then we have
	\begin{eqnarray*}
		\oHom(C \tilde \otimes \Omega C, D) 
		&\cong& \oHom(\Omega(C \otimes C', D)) \\
		&\cong & \uHom(C \otimes C', \Ba D) \\
		& \cong & \uHom(C, \uHom(C', \Ba D)) \\
		& \cong & \uHom(C, \oHom(\Omega C', D))
	\end{eqnarray*}
completing the proof of compatibility.
	To obtain the second line we used that  $\uHom(C, \Ba D) \cong B\MCp C D \cong \oHom(\Omega C, D)$ as pointed curved coalgebras by definition.
	
It remains to show that the enrichment is compatible with the model structures, i.e.\ that $\tilde \otimes$ is a left Quillen bifunctor, associating to a pair of cofibrations $f: C\to C'$ and $g: D \to D'$ a cofibration 
\[
f \boxempty g: (C \tilde \otimes D') \amalg_{C \tilde \otimes D} (C' \tilde \otimes D) \to C' \tilde \otimes D'
\]
that is acyclic if $f$ or $g$ is.

We show first that $f \boxempty g$ is a cofibration. By \cite[Corollary 4.25]{Hovey07} it suffices to check this only in the case that $f$ and $g$ are generating cofibrations.
But the generating cofibrations in $\dgCat'$ are images of cofibrations under $\Omega$, as was shown in the proof of \cite[Proposition 3.33]{Holstein6}.

Thus we may assume $D \to D'$ is of the form $\Omega(\tilde g)$ for a cofibration $\tilde g: E \to E'$ and we consider
\[
\Omega(C \otimes E') \amalg_{\Omega(C  \otimes E)} \Omega(C' \otimes E) \to \Omega(C' \otimes E')
\]
which is a cofibration as it is an image under $\Omega$ of $f\boxempty \tilde g$, which in turn is a cofibration by Theorem \ref{thm:coalgmonoidal}.

Similarly, to check the case one of $f$, $g$ is acyclic it suffices to check on generating trivial cofibrations in $\dgCat$. But these also lie in the image of $\Omega$, so the same argument applies.
\end{proof}

The following observation was used in the proof of Theorem \ref{thm:cotensor}, it is worth re-stating:
\begin{cor}
	The Koszul adjunction $\Omega: \ptdco \rightleftarrows \dgCat: \Ba$ is enriched in $\ptdco$, i.e.\ there is a natural isomorphism $\uHom(C, \Ba D) \cong \oHom(\Omega C, D)$ of pointed curved coalgebras enhancing the adjunction isomorphism.
\end{cor}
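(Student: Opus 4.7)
The plan is essentially to observe that the desired isomorphism holds by construction, so the work lies in unwinding the definitions and checking naturality. By Theorem \ref{thm:ptdco_closed}, the internal hom in $\ptdco$ is given by $\uHom(C,\Ba D)=\Ba\,\MCp{C}{D}$ whenever the target is a bar construction. By the definition introduced at the very beginning of the proof of Theorem \ref{thm:cotensor}, the external hom satisfies $\oHom(\Omega C,D)\coloneqq\Ba\,\MCp{C}{D}$. Hence the two pointed curved coalgebras agree on the nose, and the content of the corollary is really that this shared description is (i) compatible with the adjunction $\Omega\dashv\Ba$ on morphism sets and (ii) natural in both arguments.

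For (i), I would apply the functor $\Hom_{\ptdco}(\ground[*],-)$, sending a pointed curved coalgebra to its set of MC-like global points, and observe that it recovers ordinary hom-sets via the Koszul adjunction pattern used throughout Section \ref{sec:closedcoalgebras}. More concretely, one can identify $\Hom_\ptdco(\ground,\Ba\,\MCp{C}{D})$ with $\MC(\{\overline{C},D\})$, which by the Koszul adjunction of \cite{Holstein6} recovers both $\Hom_{\ptdco}(C,\Ba D)$ and $\Hom_{\dgCat'}(\Omega C,D)$. Thus the identification $\uHom(C,\Ba D)=\oHom(\Omega C,D)$ enhances the original adjunction bijection, as required.

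For (ii), naturality in $C$ is immediate from the functoriality of $\MCp{-}{D}$ in its first argument, which was already used in the proof of Theorem \ref{thm:ptdco_closed}. Naturality in $D$ among dg categories of the form $\Omega C'$ follows from the Yoneda argument employed in the proof of Theorem \ref{thm:cotensor}: any map $\Omega C'\to\Omega C''$ induces a map $\oHom(\Omega C'',D)\to\oHom(\Omega C',D)$ compatible with $\uHom(C,\Ba D)$. For a general dg category $D$, one invokes Lemma \ref{lem:freecategories} to write $D=\operatorname{coeq}(\Omega C_1\rightrightarrows\Omega C_2)$ and uses that both $\uHom(C,\Ba-)$ and $\oHom(\Omega C,-)$ convert this coequalizer into the same limit in $\ptdco$ (the bar construction sends colimits of dg categories to limits of pointed curved coalgebras by the Koszul Quillen equivalence, and $\MCp{C}{-}$ is constructed to be compatible with these limits).

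The only step that requires real care is the last one: checking that the passage from $D$ to a general colimit is compatible on both sides. This is not purely formal because $\Ba$ need not preserve colimits strictly, but in our setup the relevant (co)limits are those exhibited in Lemmas \ref{lem:coalgebracofree} and \ref{lem:freecategories}, and compatibility with these was precisely what was used in Theorems \ref{thm:ptdco_closed} and \ref{thm:cotensor}. So no new input is needed beyond packaging those observations together.
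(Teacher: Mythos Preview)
Your proposal is essentially correct and matches the paper's approach: both sides are literally $\Ba\,\MCp{C}{D}$ by the definitions given in Theorem~\ref{thm:ptdco_closed} and at the start of the proof of Theorem~\ref{thm:cotensor}, and the paper treats the corollary as nothing more than a restatement of this observation.

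One point of confusion worth flagging: your final two paragraphs, extending to a ``general dg category $D$'' via the coequalizer presentation of Lemma~\ref{lem:freecategories}, are unnecessary and slightly misdirected. In the proof of Theorem~\ref{thm:cotensor} the coequalizer extension is applied to the \emph{first} argument of $\oHom$ (to define $\oHom(D',D)$ for $D'$ not of the form $\Omega C$), not to the second. Since the corollary only concerns first arguments of the form $\Omega C$, the identity $\oHom(\Omega C,D)=\Ba\,\MCp{C}{D}$ holds on the nose for every $D$, and there is no limit or colimit to pass through. Naturality in $D$ then follows directly from the functoriality of $\MCp{C}{-}$ (established via Yoneda in the proof of Theorem~\ref{thm:ptdco_closed}), not from any compatibility-with-colimits argument.
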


\begin{rem}
	In particular, note that the MC elements constructed in an ad-hoc way to prove categorical Koszul duality in \cite{Holstein6}
	are just the objects of the MC category $\MCp C D$.
\end{rem}
\begin{cor}\label{cor:dgcathom}
	The internal hom in $Ho(\dgCat)$ may be computed as $$R\uHom(D, D') \simeq \MCp {BD} {D'}.$$
\end{cor}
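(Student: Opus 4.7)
The plan is to transport the computation of the derived internal hom from $Ho(\dgCat)$ to $Ho(\ptdco)$ via the closed monoidal equivalence of Corollary \ref{cor:monoidalequivalence}, apply the explicit formula for $\uHom$ in $\ptdco$ provided by Theorem \ref{thm:ptdco_closed}, and transport back. The main technical point will be to verify that the passage through $\uHom(\Ba D,\Ba D') = \Ba\MCp{\Ba D}{D'}$ actually computes the derived internal hom, i.e.\ to dispose of the (co)fibrancy conditions.

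First, since the Quillen equivalence $\Omega\dashv\Ba$ is quasi-strong monoidal (Lemma \ref{lem:omegamonoidal}), Corollary \ref{cor:monoidalequivalence} promotes the derived equivalence $Ho(\ptdco)\simeq Ho(\dgCat)$ to a closed monoidal equivalence. Consequently
\[
R\uHom_{\dgCat}(D,D')\;\simeq\;\Omega\bigl(R\uHom_{\ptdco}(\Ba D,\Ba D')\bigr)
\]
in $Ho(\dgCat)$. Next, one computes the right-hand side. Cofibrations in $\ptdco$ are injections (as recalled in the proof of Theorem \ref{thm:coalgmonoidal}), so every object, in particular $\Ba D$, is cofibrant. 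For the target, one replaces $\Ba D'$ by a fibrant object $X$ in $\ptdco$; by Theorem \ref{thm:ptdco_closed} and the explicit formula $\uHom(C,\Ba E)=\Ba\MCp{C}{E}$, together with Corollary \ref{cor:coalgebraresolution} to extend this formula by limits to the possibly non-bar fibrant replacement $X$, one obtains $R\uHom_{\ptdco}(\Ba D,\Ba D')\simeq\uHom(\Ba D,X)$, which is weakly equivalent to $\Ba\MCp{\Ba D}{D'}$.

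Combining these two steps yields
\[
R\uHom_{\dgCat}(D,D')\;\simeq\;\Omega\Ba\MCp{\Ba D}{D'}\;\simeq\;\MCp{\Ba D}{D'},
\]
where the final equivalence is the derived counit $\Omega\Ba Y\to Y$ of the Quillen equivalence $\Omega\dashv\Ba$, which is a weak equivalence in $\dgCat'$ for any $Y$ (after suitable cofibrant resolution of $\Ba Y$, which is automatic since all pointed curved coalgebras are cofibrant).

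The main obstacle I anticipate is the careful bookkeeping of fibrancy of $\Ba D'$ in $\ptdco$: if $\Ba D'$ is not already fibrant, the formula from Theorem \ref{thm:ptdco_closed} must be applied after a fibrant replacement, and one must verify that the resulting $\MCp{\Ba D}{D'}$ does not depend up to weak equivalence on this choice. This is where the $\ptdco$-enriched model structure on $\dgCat'$ of Theorem \ref{thm:cotensor} is most useful: the cotensor $(D')^{\Ba D}=\MCp{\Ba D}{D'}$ is a Quillen bifunctor in both variables, so it is automatically homotopy invariant in $\Ba D$ cofibrant and $D'$ fibrant, giving the desired homotopy-invariant model for $R\uHom(D,D')$ directly without explicit reference to a fibrant replacement of $\Ba D'$.
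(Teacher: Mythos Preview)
Your approach is essentially the same as the paper's: transport the internal hom through the closed monoidal equivalence $Ho(\ptdco)\simeq Ho(\dgCat)$, compute it in $\ptdco$ via the formula $\uHom(\Ba D,\Ba D')=\Ba\MCp{\Ba D}{D'}$, and then apply $\Omega$ together with the counit $\Omega\Ba\simeq\id$. The logic is correct.

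However, you have overcomplicated the fibrancy discussion. The ``main obstacle'' you anticipate does not exist: $\Ba D'$ is \emph{always} fibrant in $\ptdco$. Indeed, $\Ba$ is a right Quillen functor and every object of $\dgCat'$ is fibrant in the Dwyer--Kan model structure, so $\Ba D'$ is automatically a fibrant pointed curved coalgebra. This is exactly how the paper disposes of the issue in one line: ``Since $\Ba D'$ is always fibrant and all pointed curved coalgebras are cofibrant we may rewrite the internal hom underived.'' With this observation your middle paragraph collapses to the direct identification $R\uHom_{\ptdco}(\Ba D,\Ba D')=\uHom(\Ba D,\Ba D')=\Ba\MCp{\Ba D}{D'}$, and neither Corollary~\ref{cor:coalgebraresolution} nor the detour through a fibrant replacement $X$ is needed. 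Your final paragraph invoking Theorem~\ref{thm:cotensor} is a valid alternative, but note that it too implicitly relies on $D'$ being fibrant in $\dgCat'$, which via the right Quillen property of $\Ba$ is the same fact you were trying to avoid.
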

\begin{proof}
	On the level of homotopy categories, $L\Omega$ is a strong monoidal equivalence of categories, thus it identifies internal homs. It follows that $L\Omega(R\uHom(\Ba D, \Ba D'))$ is an internal hom in the homotopy category.  Since $\Ba D'$ is always fibrant and all pointed curved coalgebras are cofibrant we may rewrite the internal hom underived as 
	 $\Omega(\uHom(\Ba D, \Ba D'))
	 \simeq \MCp {\Ba D} {D'}$.
\end{proof}

\begin{rem}
The objects in the category $\MCp {BD} {D'}$ are unital $A_\infty$ functors $D\to D'$, either by definition, or by unravelling the relevant  definitions formulated in terms of multilinear maps cf. \cite{Canonaco19}. The morphisms in	$\MCp {BD} {D'}$ are then unital $A_\infty$ transformations between the corresponding $A_\infty$ functors. We obtain a characterization of the derived internal homs in $\dgCat$ as the category of unital $A_\infty$ functors as proposed by Kontsevich and proved rigorously in \cite{Canonaco19}.
Note that \cite{Canonaco19} considers not necessarily unital $A_\infty$ transformations between functors, but the two notions agree. This was shown in \cite[Lemma 8.2.1.3]{Lefevre03}, it also follows immediately from Lemma \ref{lem:hhbar} below, the difference between unital and non-unital transformations corresponds exactly to the difference between reduced and non-reduced Hochschild cochains.

While the embedding of dg categories into $A_\infty$ categories thus computes the correct internal hom object, it is worth recalling that the category of $A_\infty$ categories does not have a sensible monoidal structure.
\end{rem}

\begin{theorem}\label{thm:dgmapping}
	Given two dg categories $D, D'$ the mapping space $\Map(D, D')$ is weakly equivalent to the core of $\Ndg R\uHom(D, D')$.
\end{theorem}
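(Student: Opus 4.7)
The plan is to chain together four ingredients already available in the excerpt: the identification of the derived internal hom from Corollary \ref{cor:dgcathom}, the formula for the internal hom in $\ptdco$ from Theorem \ref{thm:ptdco_closed}, the factorization $\Ndg \simeq F \circ \Ba$ recalled in Remark \ref{rem:diagram}, and the computation of mapping spaces in $\ptdco$ as cores from Corollary \ref{cor:coalgmapping}. The Quillen equivalence $\Omega \dashv \Ba$ then bridges mapping spaces in $\ptdco$ and in $\dgCat'$.

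Concretely, I would first apply Corollary \ref{cor:dgcathom} to write $R\uHom(D, D') \simeq \MCp{\Ba D}{D'}$, and then invoke Theorem \ref{thm:ptdco_closed}, which defines $\uHom(\Ba D, \Ba D') = \Ba \MCp{\Ba D}{D'}$. Applying the factorization from Remark \ref{rem:diagram} I obtain
\[
\Ndg R\uHom(D, D') \simeq F \Ba R\uHom(D, D') \simeq F\uHom(\Ba D, \Ba D').
\]
Next I would apply Corollary \ref{cor:coalgmapping} with $C = \Ba D$ and $C' = \Ba D'$ to identify the core of $F\uHom(\Ba D, \Ba D')$ with the mapping space $\Map_{\ptdco}(\Ba D, \Ba D')$. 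Finally, since every pointed curved coalgebra is cofibrant, since $\Ba D'$ is fibrant, and since $\Omega \dashv \Ba$ is a Quillen equivalence, there is a canonical weak equivalence $\Map_{\ptdco}(\Ba D, \Ba D') \simeq \Map_{\dgCat'}(D, D') = \Map(D, D')$. Concatenating these equivalences yields the theorem.

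The main technical point, and the place where the argument most depends on input external to the excerpt, is coherence of the equivalence $\Ndg \simeq F \circ \Ba$ when applied to the specific representative $\MCp{\Ba D}{D'}$ of $R\uHom(D, D')$. Because $\MCp{\Ba D}{D'}$ is already a fibrant dg category, no further fibrant replacement should be needed before applying $\Ndg$, and the identification should be formal from \cite{Holstein6}; still, I would spell this out to confirm that the core construction really commutes with the composed equivalences. Everything else is a bookkeeping exercise in stringing together the results already developed earlier in the paper.
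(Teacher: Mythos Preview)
Your proposal is correct and follows essentially the same route as the paper: identify $\Map(D,D')$ with $\Map_{\ptdco}(\Ba D,\Ba D')$ via the Quillen equivalence, use Corollary~\ref{cor:coalgmapping} to rewrite this as the core of $F\uHom(\Ba D,\Ba D')$, and then unwind $\uHom(\Ba D,\Ba D') = \Ba\MCp{\Ba D}{D'}$ together with $\Ndg \simeq F\Ba$ and Corollary~\ref{cor:dgcathom}. The only difference is the order of presentation; your concern about coherence of $\Ndg \simeq F\Ba$ is not an issue, as the paper simply invokes this identification from \cite{Holstein6} without further ado.
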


\begin{proof}
 By the main results of \cite{Holstein6} we may compute $\Map(D, D')$ as $\Map(\Ba D, \Ba D')$ in $\ptdco$.	
	By Theorem \ref{thm:ptdco_closed} and Corollary \ref{cor:coalgmapping} this is given by the core of 
	\[F\Ba \MCp {\Ba D}{D'} \simeq \Ndg \MCp {\Ba D} {D'} \simeq \Ndg R\uHom(D,D'),\] where we used $\Ndg \simeq F \Ba$ from Theorem 4.16 in \cite{Holstein6} as well as Corollary \ref{cor:dgcathom}.
\end{proof}
\begin{rem}
The theorem may also be deduced directly from the existence of an internal hom in $Ho(\dgCat)$ and some other standard results.
We denote the left adjoint of the dg nerve by $L$ and note that $L(*) = \ground$ (considered as a dg category with one object).
Then the core of $\Ndg(R\uHom(D, D'))$ may be computed as 
\[
\Map_{\qCat}(*, \Ndg(R\uHom(D, D'))) \simeq \Map_{\dgCat}(L*, R\uHom(D, D')) \simeq \Map_{\dgCat}(\ground \otimes D, D')
\]
Here we use that the closed structure $\dgCat$ induces a weak equivalence of mapping spaces even though it is not Quillen. This follows from the Yoneda Lemma by taking hom out an arbitrary simplicial set and using the simplicial enrichment on $\dgCat$ \cite[Section 5]{Toen06}.
\end{rem}

We may compare Theorem \ref{thm:dgmapping} with To\"en's characterization of $\Map(D, D')$ as the (classical) nerve of the category of weak equivalences in the category of right quasi-representable cofibrant $D\otimes {D'}\op$-modules \cite[Theorem 1.1]{Toen06}.
In the special case of $\Map(\ground, D)$ this gives the following corollary:
\begin{cor}\label{cor:nervecore}
For a dg category $D$ the core of $\Ndg(D)$ is equivalent to the nerve of the 1-category of quasi-isomorphisms between cofibrant quasi-representable $D\op$-modules.
\end{cor}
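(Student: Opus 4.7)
The plan is to apply Theorem \ref{thm:dgmapping} to a well-chosen pair of dg categories and then match the result against To\"en's formula, as indicated in the remark preceding the statement.

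First I would specialize Theorem \ref{thm:dgmapping} to the case where the source is $\ground$ (viewed as a dg category with one object and endomorphism algebra $\ground$) and the target is $D$. This produces a weak equivalence
\[
\Map(\ground, D) \simeq \text{core of } \Ndg R\uHom(\ground, D).
\]
Next I would observe that $\ground$ is the monoidal unit of $\dgCat$, and since $Ho(\dgCat)$ is closed monoidal by Corollary \ref{cor:monoidalequivalence}, the derived internal hom satisfies the unit axiom $R\uHom(\ground, D) \simeq D$. Consequently the right hand side above is equivalent to the core of $\Ndg D$.

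Second, I would invoke To\"en's theorem quoted just before the corollary: $\Map(D_1, D_2)$ is the nerve of the 1-category of weak equivalences in the category of cofibrant right quasi-representable $D_1 \otimes D_2\op$-modules. Specializing $D_1 = \ground$ and $D_2 = D$, the tensor product $\ground \otimes D\op$ is just $D\op$, so $\Map(\ground, D)$ is the nerve of quasi-isomorphisms between cofibrant quasi-representable $D\op$-modules. Equating the two descriptions of $\Map(\ground, D)$ yields the corollary.

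The main obstacle is verifying the unit isomorphism $R\uHom(\ground, D) \simeq D$ in $Ho(\dgCat)$. This is a closed-monoidal unit axiom, so it follows formally from the existence of the closed monoidal structure in Corollary \ref{cor:monoidalequivalence} together with the derived adjunction $-\otimes^{L}\ground \simeq \id$, which in turn may be transported across the strong monoidal equivalence $L\Omega$ from the corresponding (trivial) statement in $Ho(\ptdco)$. A minor additional subtlety is ensuring that To\"en's theorem as quoted really computes the model-categorical mapping space in the Dwyer--Kan structure on $\dgCat$ (so that we may legitimately equate the two descriptions); this is precisely the content of \cite[Theorem 1.1]{Toen06}, so no extra work is required.
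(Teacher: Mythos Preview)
Your proposal is correct and follows exactly the argument the paper sketches before stating the corollary: specialize Theorem~\ref{thm:dgmapping} and To\"en's description of $\Map(D_1,D_2)$ to $D_1=\ground$, $D_2=D$, and use that $\ground$ is the monoidal unit so that $R\uHom(\ground,D)\simeq D$. The paper leaves the unit identification implicit; your explicit justification via the closed monoidal structure on $Ho(\dgCat)$ is the right way to fill this in.
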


This corollary is already interesting in the case that the dg category has one object. Then it says that the core of the simplicial Maurer-Cartan set of a dg algebra $A$ (as considered in \cite[Section 4.1]{Holstein6}) is given by the $A$-component of the nerve of the category of quasi-isomorphisms between cofibrant $A$-modules.

We have considered the Dwyer-Kan model structure on $\dgCat$ so far, but similar results apply for the Morita model structure.
We denote by $\dgCatM$ the left Bousfield localization of $\dgCat'$ at all \emph{Morita equivalences}, i.e.\ functors inducing equivalences of derived categories.

\begin{lem}
	There is a Morita model structure on $\ptdco$ whose weak equivalences are all those maps whose cobar construction is a Morita equivalence and whose cofibrations are the injections. It is Quillen equivalent to $\dgCatM$.
\end{lem}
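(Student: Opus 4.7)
The natural approach is to obtain the Morita model structure on $\ptdco$ as a left Bousfield localization of the existing model structure from \cite{Holstein6}, chosen so that the Quillen equivalence $\Omega \dashv \Ba$ transports across to give a Quillen equivalence with $\dgCatM$. The plan breaks into three main steps.

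First, I would verify that $\ptdco$ with the model structure of \cite{Holstein6} is left proper and combinatorial. Left properness follows from the fact that all objects are cofibrant (cofibrations being injections). Combinatoriality should already be known from \cite{Holstein6}, or can be deduced from the Quillen equivalence with the combinatorial model category $\dgCat'$. This ensures that left Bousfield localizations exist at any set of morphisms.

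Second, I would choose the set $S$ of morphisms at which to localize. Let $T$ be a set of maps in $\dgCat'$ whose left Bousfield localization produces $\dgCatM$ (e.g.\ inclusions $D \to \widehat D$ of a dg category into its Morita completion, restricted to a sufficiently small set by combinatoriality of $\dgCatM$). Replacing the sources and targets by cofibrant replacements if necessary and applying $\Ba$ (which sends Morita equivalences between cofibrant objects to the appropriate localizing maps on the coalgebra side), set $S := \{\Ba g : g \in T\}$. Let $\ptdcono_{\mathrm{Mor}}$ denote the resulting left Bousfield localization. By general properties of left Bousfield localization, the cofibrations are unchanged (still injections), and every weak equivalence of the original structure remains a weak equivalence.

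Third, I would identify the weak equivalences and establish the Quillen equivalence. A map $f : C \to C'$ is a weak equivalence in $\ptdcono_{\mathrm{Mor}}$ iff it induces a weak equivalence on mapping spaces into every $S$-local fibrant object. Since $\Omega \dashv \Ba$ is a Quillen equivalence with respect to the unlocalized structures, and since $\Ba$ sends Morita-fibrant dg categories (in $\dgCatM$) to $S$-local fibrant coalgebras by construction of $S$, the $S$-local fibrant objects in $\ptdco$ are, up to weak equivalence, exactly the bar constructions of Morita-fibrant dg categories. Using the derived adjunction isomorphism $\Map(C, \Ba D) \simeq \Map(\Omega C, D)$, this shows $f$ is a weak equivalence in $\ptdcono_{\mathrm{Mor}}$ iff $\Omega f$ is a Morita equivalence, which is the desired description. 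Then $\Omega$ preserves cofibrations (they are unchanged) and sends acyclic cofibrations to Morita acyclic cofibrations (by the characterization of weak equivalences just established), so the adjunction descends to a Quillen adjunction; since the derived functor of $\Omega$ on the unlocalized level is already an equivalence, and the localizations on both sides are at corresponding classes of maps, the descended adjunction remains a Quillen equivalence.

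The main technical obstacle will be the third step: verifying that the $S$-local fibrant objects in $\ptdco$ correspond under $\Ba$ precisely to the Morita-fibrant objects in $\dgCat'$. This is where one pays for the fact that $\Ba$ depends on splittings and does not commute strictly with the localization. In practice this is handled by noting that derived mapping spaces are preserved by the Quillen equivalence, so $S$-locality for a fibrant $\Ba D$ reduces to $T$-locality of $D$ in $\dgCat'$, which is by definition Morita fibrancy. The characterization of cofibrations as injections is automatic from the localization procedure, completing the proof.
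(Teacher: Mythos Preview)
Your proposal is correct and follows the same approach as the paper: obtain the Morita model structure as a left Bousfield localization of the model structure from \cite{Holstein6} (using that $\ptdco$ is left proper and combinatorial, which the paper cites from \cite[Proposition 3.33]{Holstein6}), then check that $\Omega \dashv \Ba$ remains a Quillen equivalence after localization. Your version is more careful than the paper's terse argument in producing an explicit set $S$ to localize at and in verifying the characterization of weak equivalences via local objects, but the underlying strategy is identical.
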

We denote pointed curved coalgebras with the Morita model structure by $\ptdcoM$. 
\begin{proof}
	By \cite[Proposition 3.33]{Holstein6} $\ptdco$ is a left proper combinatorial model category, thus the left Bousfield localization at Morita equivalences exists.
	
	The cobar construction on $\ptdcoM$ defines a left Quillen functor to $\dgCatM$, which induces an equivalence on homotopy categories.
	Thus the Quillen equivalence $\ptdco \rightleftarrows \dgCat$ restricts to a Quillen equivalence 	
\end{proof}
\begin{lem}\label{lem:coalgmoritamonoidal}
	The tensor product makes $\ptdcoM$ into a monoidal model category whose homotopy category is monoidally equivalent to $Ho(\dgCatM)$. 
	\end{lem}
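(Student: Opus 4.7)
The plan is to leverage the already established monoidal model structure on $\ptdco$ (Theorem \ref{thm:coalgmonoidal}) and exploit the fact that $\ptdcoM$ is a left Bousfield localization of $\ptdco$ with the same cofibrations (namely injections). Since the class of weak equivalences has only been enlarged, the unit axiom is automatic: any Dwyer-Kan weak equivalence is a Morita equivalence, and Corollary \ref{cor:tensorwe} already shows the unit axiom in $\ptdco$. Similarly, the cofibration-cofibration part of the pushout-product axiom is inherited verbatim from Theorem \ref{thm:coalgmonoidal}. Thus the entire problem reduces to checking the Morita-acyclic version of the pushout-product axiom.

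The central technical step is to show that tensoring with an arbitrary pointed curved coalgebra $E$ preserves Morita equivalences. By Lemma \ref{lem:omegamonoidal} the cobar construction sends $E \otimes C$ to a dg category quasi-equivalent to $\Omega E \otimes \Omega C$; since the definition of Morita equivalence in $\ptdcoM$ is pulled back along $\Omega$, it suffices to show that $\Omega E \otimes (-)$ preserves Morita equivalences between cofibrant dg categories. This is a standard property of the Morita model structure on $\dgCat$: Morita equivalences are invertible in $\mathsf{Ho}(\dgCatM)$, which inherits a monoidal structure from $\dgCat$, so tensoring a Morita equivalence between cofibrant dg categories with any cofibrant dg category produces a Morita equivalence. (Equivalently, one may verify directly that tensoring with a cofibrant dg category preserves the generating trivial cofibrations for the Morita localization.) I expect this to be the main technical obstacle, but it reduces cleanly to a known fact on the side of dg categories; one can also argue on the coalgebra side using left properness of $\ptdco$ together with the fact that $\Omega$ preserves cofibrations, as in the final paragraph of the proof of Theorem \ref{thm:coalgmonoidal}.

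With this in hand, the pushout-product axiom for a cofibration $f: C \to C'$ and a Morita-acyclic cofibration $g: D \to D'$ follows from the standard cube argument: by the previous paragraph the vertical maps $C \otimes D \to C \otimes D'$ and $C' \otimes D \to C' \otimes D'$ are Morita equivalences, and by left properness (inherited from $\ptdco$ as a left Bousfield localization of a left proper combinatorial model category) the pushout $C' \otimes D \to (C \otimes D') \amalg_{C \otimes D} (C' \otimes D)$ is a Morita equivalence as well; applying 2-out-of-3 to the resulting commutative triangle yields that the pushout-product map is itself a Morita equivalence.

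For the final assertion about homotopy categories, one observes that the Quillen equivalence $\Omega \dashv \Ba$ restricts to a Quillen equivalence between $\ptdcoM$ and $\dgCatM$ (which is exactly how the Morita model structure on $\ptdco$ was defined in the preceding lemma), inducing an equivalence $\mathsf{Ho}(\ptdcoM) \simeq \mathsf{Ho}(\dgCatM)$. By Lemma \ref{lem:omegamonoidal} the derived functor $L\Omega$ is strong monoidal on $\mathsf{Ho}(\ptdco) \to \mathsf{Ho}(\dgCat)$, and since Morita equivalences are sent to Morita equivalences on both sides, this strong monoidal equivalence descends to the Morita localizations by the universal property of left Bousfield localization.
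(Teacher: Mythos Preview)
Your proposal is correct and follows essentially the same approach as the paper, just spelled out in considerably more detail: the paper's proof simply points back to Theorem~\ref{thm:coalgmonoidal} for the monoidal model axioms, invokes Lemma~\ref{lem:omegamonoidal} to get that $\Omega$ remains quasi-strong monoidal for the Morita structures, and cites the well-known fact that the tensor product of dg categories preserves Morita equivalences. Your explicit use of left properness for the acyclic case of the pushout-product axiom is a minor variation on the homotopy-pushout argument used in Theorem~\ref{thm:coalgmonoidal}, but the substance is the same.
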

\begin{proof}
	The closed monoidal structure is the one considered in Theorem \ref{thm:coalgmonoidal}.
	
		We first note that Lemma \ref{lem:omegamonoidal} implies that $\Omega$ is quasi-strong monoidal also for the Morita model structures. 
		We then observe that tensor product of dg categories preserves Morita equivalences. This is well-known, see Exercise 32 in \cite{Toen07a}.
\end{proof}

\begin{prop}
	The category $\dgCatM$ is $\ptdcoM$-enriched model category.
\end{prop}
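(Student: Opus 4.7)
The plan is to inherit the $\ptdco$-enriched structure of $\dgCat'$ from Theorem \ref{thm:cotensor} and to verify only the data that changes under Bousfield localization, namely the compatibility between the new (larger) classes of weak equivalences and the pushout-product.

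First I would note that $\ptdcoM$ and $\dgCatM$ have the same underlying categories as $\ptdco$ and $\dgCat'$ respectively, and the same classes of cofibrations, since a left Bousfield localization changes only the weak equivalences and the fibrations. Consequently the enrichment $\oHom(-,-)$, tensoring $-\tilde\otimes-$, and cotensoring $(-)^{(-)}$ constructed in Theorem \ref{thm:cotensor}, together with the adjunction isomorphisms $\oHom(C\,\tilde\otimes\, D', D)\cong\uHom(C,\oHom(D',D))\cong\oHom(D',D^C)$, transfer verbatim. Moreover, the cofibration part of the pushout-product axiom for $\tilde\otimes$ is inherited directly from Theorem \ref{thm:cotensor}, since cofibrations have not changed.

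The main step is the acyclic pushout-product axiom: given cofibrations $f\colon C\to C'$ in $\ptdcoM$ and $g\colon D\to D'$ in $\dgCatM$ with at least one of them a Morita equivalence, we must show that $f\boxempty g$ is a Morita equivalence. By the standard 2-out-of-3 reduction, it suffices to prove that (i) for every cofibrant $C\in\ptdcoM$, the functor $C\tilde\otimes(-)\colon\dgCatM\to\dgCatM$ preserves Morita equivalences, and (ii) for every cofibrant $D\in\dgCatM$, the functor $(-)\tilde\otimes D\colon\ptdcoM\to\dgCatM$ sends Morita equivalences in $\ptdcoM$ to Morita equivalences in $\dgCatM$. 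For (i), if $D=\Omega E$ then by definition $C\tilde\otimes D=\Omega(C\otimes E)$, which by Lemma \ref{lem:omegamonoidal} is quasi-equivalent to $\Omega C\otimes\Omega E$. A Morita equivalence between such objects is therefore obtained by tensoring a Morita equivalence $\Omega E\to\Omega E'$ with the dg category $\Omega C$, and tensor product of dg categories preserves Morita equivalences, as used in the proof of Lemma \ref{lem:coalgmoritamonoidal}. The general case reduces to this one via Lemma \ref{lem:freecategories}, writing an arbitrary dg category as a coequalizer of cobar constructions and invoking the fact that both tensoring and Morita equivalence are compatible with the relevant colimits. Claim (ii) is handled symmetrically, invoking Lemma \ref{lem:coalgmoritamonoidal} in place of the dg-side statement.

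The main obstacle is the passage from objects in the image of $\Omega$ to arbitrary dg categories and arbitrary pointed curved coalgebras in the argument above, since not every colimit preserves weak equivalences. I expect this to be handled as in the proof of Theorem \ref{thm:cotensor}, by reducing to generating (trivial) cofibrations in $\dgCatM$ and $\ptdcoM$ and observing that these lie in the image of $\Omega$ applied to (trivial) cofibrations of $\ptdcoM$: the Dwyer-Kan generators already had this property by \cite[Proposition 3.33]{Holstein6}, and the generators added in the Morita localization can likewise be chosen in the image of $\Omega$, since $\Omega\colon\ptdcoM\to\dgCatM$ is a Quillen equivalence with strong compatibility with the monoidal structures. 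Once this is established, the argument of Theorem \ref{thm:cotensor} reducing the pushout-product axiom to its coalgebra-side counterpart in Lemma \ref{lem:coalgmoritamonoidal} applies verbatim.
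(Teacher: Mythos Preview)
Your overall strategy---inheriting the enrichment and the cofibration half of the pushout-product from Theorem~\ref{thm:cotensor}, then reducing the acyclic case to the statement that $C\tilde\otimes(-)$ and $(-)\tilde\otimes D$ preserve Morita equivalences---is correct and is exactly the shape of the paper's argument. The gap is in how you establish that preservation.

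Your argument for (i) only handles the case where both source and target of the Morita equivalence lie in the image of $\Omega$, and your extension to arbitrary $D$ via Lemma~\ref{lem:freecategories} does not work: coequalizers do not preserve Morita equivalences, so resolving by cobar constructions gives you nothing here. You identify this obstacle yourself and then fall back on the claim that the generating Morita-acyclic cofibrations of $\dgCatM$ can be chosen in the image of $\Omega$. The paper explicitly considers this route and says: ``We expect this to be true, but to avoid excessive computations we may use another argument.'' In other words, you have reduced the proof to a step that is plausible but unproven, and which the paper deliberately sidesteps.

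The paper's workaround is the following simple observation: for \emph{any} dg category $D$ the counit $\Omega\Ba D\to D$ is a quasi-equivalence, hence a Morita equivalence, and since $\tilde\otimes$ is a left Quillen bifunctor for the Dwyer--Kan structures (Theorem~\ref{thm:cotensor}) we have $C\tilde\otimes\Omega\Ba D\simeq C\tilde\otimes D$. This lets one replace an arbitrary Morita equivalence $D\to D'$ by a zig-zag passing through $\Omega\Ba D$ and $\Omega\Ba D'$, objects which \emph{are} in the image of $\Omega$, and then the argument you gave for the special case (via Lemma~\ref{lem:omegamonoidal} and preservation of Morita equivalences by the dg tensor product) applies. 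The same trick handles (ii). This $\Omega\Ba$-resolution step is the missing idea in your proposal.
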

\begin{proof}
	Given $f: C \to C'$ in $\ptdcoM$ and $g: D \to D'$ in $\dgCatM$ we consider
	\[
	f \boxempty g: (C \tilde \otimes D') \amalg_{C \tilde \otimes D} (C' \tilde \otimes D) \to C' \tilde \otimes D'
	\]
		As the cofibrations in $\dgCat$ and $\dgCatM$ agree we see that $f \boxempty g$ is a cofibration if $f$ and $g$ are, by the proof of Theorem \ref{thm:cotensor}.
		
	We now have to check that $f \boxempty g$
	is a Morita equivalence if $f$ or a $g$ is a (Morita) acyclic cofibration.	
	It would suffice to show that all generating acyclic cofibrations in $\dgCatM$ lie in the image of $\Omega$. We expect this to be true, but to avoid excessive computations we may use another argument.
	
	By imitating the proof of Theorem \ref{thm:coalgmonoidal}, it suffices to show that the action $\tilde \otimes$ preserves Morita equivalences in $\ptdcoM$ and $\dgCatM$. 
	Morita equivalences will be denoted by $\simeq$ in the following.
	
	The proof uses the simple observation that for any dg category $D$ we have a natural quasi-equivalence $\Omega \Ba D \xrightarrow{\sim} D$, thus by Theorem \ref{thm:cotensor} we have $C \tilde \otimes \Omega \Ba D \simeq C \tilde \otimes D$ for any $C \in \ptdco$. 
	
	Let $C \xrightarrow{\sim} C'$ be an acyclic cofibration in $\ptdcoM$.
	As $\Omega$ is quasi-strong monoidal and $\otimes$ preserves Morita equivalences, see Lemma \ref{lem:coalgmoritamonoidal}, we have the following zig-zag of Morita equivalences:
	 \[
	C \tilde \otimes D 
	\xleftarrow{\sim} C \tilde \otimes \Omega \Ba D  
	\xleftarrow{\sim} \Omega C \otimes \Omega \Ba D
	\xrightarrow{\sim} \Omega C' \otimes \Omega \Ba D
	\xrightarrow{\sim} C' \otimes \Omega \Ba D
	\xrightarrow{\sim} C' \tilde \otimes D'
	\]
	and thus $C \tilde \otimes D \simeq C' \tilde \otimes D$.
	
Similarly, let $D \to D'$ be a Morita equivalence. 
As $\Omega \Ba D \simeq D$ it follows from 2-out-of-3 that $\Omega \Ba D \simeq \Omega \Ba D'$.
	Putting this together with the previous observations we get the following Morita equivalences
	 \[
	 C \tilde \otimes D 
	 \xleftarrow{\sim} C \tilde \otimes \Omega \Ba D 
	\xleftarrow{\sim} \Omega C \otimes \Omega \Ba D
	 \xrightarrow{\sim} \Omega C \otimes \Omega \Ba D'
	 \xrightarrow{\sim} C \tilde \otimes \Omega \Ba D' 
	 \xrightarrow{\sim} C \tilde \otimes D'.
	 \]
	 Thus  $C \tilde \otimes D \simeq C \tilde \otimes D'$ and the proposition follows. 
	\end{proof}

\begin{cor}
	The internal hom in $Ho(\dgCatM)$ is computed by 
	\[R\uHom(D, D') \simeq \MCp {BD} {D'}\] whenever $D'$ is Morita fibrant.
\end{cor}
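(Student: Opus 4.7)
The plan is to mimic the proof of Corollary \ref{cor:dgcathom} in the Morita setting, being careful about the subtler notion of fibrancy. By Lemma \ref{lem:coalgmoritamonoidal}, the cobar construction induces a strong monoidal equivalence $L\Omega: Ho(\ptdcoM) \xrightarrow{\sim} Ho(\dgCatM)$. Strong monoidal equivalences of closed monoidal categories preserve internal homs, so we get
\[
R\uHom_{\dgCatM}(D, D') \simeq L\Omega\bigl(R\uHom_{\ptdcoM}(\Ba D, \Ba D')\bigr).
\]
The task therefore reduces to identifying the right hand side with $\MCp{\Ba D}{D'}$.

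The next step is to show that the derived internal hom on the right can be computed without deriving. Every object of $\ptdcoM$ is cofibrant, since the cofibrations are the injections (unchanged by Bousfield localization), so $\Ba D$ needs no cofibrant replacement. For the second argument, because $\Omega: \ptdcoM \rightleftarrows \dgCatM : \Ba$ is a Quillen adjunction, the right adjoint $\Ba$ preserves fibrant objects; so when $D'$ is Morita fibrant, $\Ba D'$ is fibrant in $\ptdcoM$. The monoidal model structure on $\ptdcoM$ (which agrees as a closed symmetric monoidal category with that of $\ptdco$ and is compatible with the Morita model structure by Lemma \ref{lem:coalgmoritamonoidal}) then guarantees that the underived internal hom computes $R\uHom$, giving
\[
R\uHom_{\ptdcoM}(\Ba D, \Ba D') \simeq \uHom(\Ba D, \Ba D') \cong \Ba \MCp{\Ba D}{D'}
\]
by Theorem \ref{thm:ptdco_closed}.

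Finally, we apply $L\Omega$. The pointed curved coalgebra $\Ba \MCp{\Ba D}{D'}$ is cofibrant, so $L\Omega$ agrees with $\Omega$ on it, and the derived counit of the Quillen equivalence $\Omega \Ba \MCp{\Ba D}{D'} \to \MCp{\Ba D}{D'}$ is a quasi-equivalence, hence a Morita equivalence. Concatenating these steps gives the desired equivalence in $Ho(\dgCatM)$.

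The main place to exercise care is in the second paragraph: one must check that the assumption of Morita fibrancy on $D'$ is enough to make $\Ba D'$ fibrant in the \emph{localized} model structure on $\ptdcoM$, and that the closed monoidal structure of Theorem \ref{thm:ptdco_closed} is compatible with $\ptdcoM$ in the sense required by Lemma \ref{lem:coalgmoritamonoidal}. Both facts are by design; everything else is formal consequence of the monoidal equivalence on homotopy categories, paralleling the non-Morita argument of Corollary \ref{cor:dgcathom}.
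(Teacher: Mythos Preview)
Your proposal is correct and follows essentially the same approach as the paper: the paper's proof simply says ``As for Corollary \ref{cor:dgcathom}, except that in order to ensure $\Ba D'$ is fibrant we need to assume $D'$ is fibrant in $\dgCatM$,'' which is exactly the argument you have written out in detail. Your explicit justification that $\Ba$ preserves fibrancy because it is right Quillen is the one point the paper leaves implicit.
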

\begin{proof}
	As for Corollary \ref{cor:dgcathom}, except that in order to ensure $\Ba D'$ is fibrant we need to assume $D'$ is fibrant in $\dgCatM$.
\end{proof}

\section{Hochschild cohomology}\label{sec:Hochschild}
One key application of the construction of the derived internal hom of dg categories is the computation of homotopy groups in terms of Hochschild cohomology by To\"en \cite{Toen06}.
We recreate this computation in our setting in somewhat greater generality.

\begin{defi}\label{defi:hhd}
	The Hochschild cochain complex of a dg category $D$  with coefficients in a bimodule $M: D \otimes D\op \to \Ch$ is defined as 
	\[\HHH(D, M)  \coloneqq R\Hom_{D\otimes D\op}(D, M).\]
\end{defi}
It is well-known that	$\HHH(D, M)$ may be computed by the Hochschild complex of the dg category $D$ with coefficients in $M$. 
Writing $D(d_0,d_1)$ for $\Hom_D(d_0, d_1)$ for better legibility this complex is 
	\[\prod M(d_0, d_1)\to \prod \Hom(D(d_0, d_1), M(d_0, d_1))\to \prod\Hom(D(d_1, d_2) \otimes D(d_0, d_1), M(d_0, d_2)) \cdots \to     ,
\]
with a differential induced  by the internal differentials, composition in $D$ and the action of $D$ on $M$. 
One may equivalently compute with the reduced Hochschild complex replacing $D$ by $\overline D$ everywhere.

We specialise now to the case where $M$ is given by another dg category $D'$ with a pair of functors $F, G: D \to D'$, i.e.\ we consider the bimodule ${}_FD'_G$, or by abuse of notation just $D'$,  sending $d_1 \otimes d_0$ to $\Hom_{D'}(F(d_0), G(d_1))$.

Any dg functor $F: D \to D'$ gives rise to an object in $R\uHom(D, D')$ in a natural way. We use this to state the following lemma:
\begin{lem}\label{lem:hhbar}
	Let $F, G: D \to D'$ be functors of dg categories, then Hochschild cohomology of $D$ with coefficients in $D'$ is given by 
	\[\HHH(D, {}_{F}D'_G)  \cong \Hom_{R\uHom(D,D')}(F,G).\]
\end{lem}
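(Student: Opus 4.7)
The plan is to reduce the statement to an unravelling of the convolution construction and the Maurer--Cartan hom complex, combined with the Koszul correspondence between dg functors and MC elements, and then to recognise the resulting complex as the (reduced) Hochschild cochain complex.

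First I would invoke Corollary \ref{cor:dgcathom} to replace $R\uHom(D,D')$ by $\MCp{BD}{D'}$. Under this identification, a dg functor $F\colon D\to D'$ determines an object $(F_0,\xi_F)$ of $\MCp{BD}{D'}$: the map on objects $F_0\colon\Ob D\to \Ob D'$, and the MC element $\xi_F\in\End_{\{\overline{BD},D'\}}(F_0)$ that vanishes on all summands $\bar D^{\otimes n}[n]\subset\overline{BD}$ except for $n=1$, where it equals $F$ itself (this is the MC element corresponding, via the Koszul adjunction $\Hom(BD,D')\cong\MC(\{\overline{BD},D'\})$, to the composite $\Omega BD\to D\xrightarrow{F}D'$). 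Similarly for $G$.

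Next I would unwind the hom space
\[
\Hom_{\MCp{BD}{D'}}((F_0,\xi_F),(G_0,\xi_G))\;=\;\bigl(\Hom_{D'_0}((F_0,G_0)_*BD,\,D'),\,d^{[\xi_F,\xi_G]}\bigr).
\]
Decomposing $BD=\bigoplus_{n\geq0}\bar D^{\otimes n}_{D_0}[n]$ as a $D_0$-bicomodule (with $n=0$ giving $D_0$), the underlying graded vector space becomes
\[
\prod_{n\geq 0}\prod_{d_0,\dots,d_n\in\Ob D}\Hom\bigl(\bar D(d_{n-1},d_n)\otimes\cdots\otimes\bar D(d_0,d_1),\,D'(Fd_0,Gd_n)\bigr)[-n],
\]
which is (up to the usual degree shift) the reduced Hochschild cochain complex of $D$ with coefficients in ${}_FD'_G$, with the $n=0$ part $\prod_d D'(Fd,Gd)$ contributed by the augmentation $D_0\hookrightarrow BD$. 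One then has to check that the twisted differential reproduces the Hochschild differential: the internal part $d_{D'}\circ(-)\pm(-)\circ d_{BD}$ accounts for the internal differentials together with the interior face maps (composition in $D$ between adjacent tensor factors, which is precisely how the bar differential on $BD$ is defined), while $\xi_G*(-)$ and $(-)* \xi_F$, via convolution through the comultiplication $\Delta_{BD}$ followed by multiplication in $D'$, produce the two outermost face maps (post-composition with $G$ on the first argument, pre-composition with $F$ on the last, both followed by composition in $D'$).

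The main obstacle is Step 3: matching the signs and the various face maps between the twisted convolution differential and the standard Hochschild differential. This is essentially a direct bookkeeping argument once the decomposition of $BD$ is fixed, and it is the same bookkeeping that underlies the standard identification of the bar construction $BD$ as a resolution of $D$ as a $D$-bimodule. Finally, the passage from the reduced to the non-reduced Hochschild cochain complex computing $R\Hom_{D\otimes D\op}(D,{}_FD'_G)$ is the classical one, recalled after Definition \ref{defi:hhd}, and completes the identification.
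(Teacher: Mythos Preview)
Your proposal is correct and follows essentially the same route as the paper: invoke Corollary~\ref{cor:dgcathom} to identify $R\uHom(D,D')$ with $\MCp{\Ba D}{D'}$, describe the MC elements $\xi_F,\xi_G$ associated to strict dg functors, and then unravel the twisted hom space $\Hom_{\{BD,D'\}}(F_0,G_0)^{[\xi_F,\xi_G]}$ as the reduced Hochschild cochain complex, matching the bar differential on $\Ba D$ with the inner face maps and the twist by $\xi_F,\xi_G$ with the outer ones. The paper's own proof is terser but makes exactly these identifications; your write-up simply spells out the decomposition of $\Ba D$ and the origin of each face map more explicitly.
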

\begin{proof}
	By definition 
	$\uHom_{R\Hom(D,D')}(F,G)\cong \Hom_{\MCp {\Ba D} {D'}}(F, G)$ is the twisted hom space $\Hom_{\uHom(\Ba D, D')}(F, G)^{[\xi_F, \xi_G]}$ where $\xi_F, \xi_G$ are the MC elements in the convolution category corresponding to $F, G: D \to D'$.
	Unravelling definitions, we recognize this as the reduced Hochschild complex which we may write as
	\[D'\to\Hom_{D'_0}(\overline{D}, D')\to  \Hom_{D'_0}(\overline{D} \boxempty_{D_0} \overline{D}, D')\to\cdots  
	\]
	with differential induced by internal differentials, the composition in $D$ and action of $D$ on $D'$ (the latter corresponding to the twist by $\xi_F$ and $\xi_G$).
\end{proof}
If $F = G = \id_D$ this specializes to the well-known equivalence $\HHH(D) \cong \End_{R\uHom(D,D)}(\id)$.

\begin{rem} 
	One can also directly relate $\Hom_{R\uHom(D, D')}(F,G)$ to $R\Hom_{D \otimes D\op}(D, {}_FD'_G)$ without reference to any resolutions, as follows. 

	We use that the functor $F: D \to D'$ gives rise to a $D \otimes {D'}\op$-bimodule $D'_F$ sending $d, d' \mapsto \Hom_{D'}(d', F(d))$.
	The functor $\hat F = \id \otimes F \op: D \otimes D\op \to D \otimes {D'}\op$ induces an adjunction on module categories.
	Considering the $D\otimes D\op$-module $D$ and the $D\otimes {D'}\op$-module $D'_{G}: d \otimes d' \mapsto \Hom(d', d)$ we have
	\[
	R\Hom_{D\otimes D\op}(D, \hat F^*(D'_G) \cong R\Hom_{D \otimes {D'}\op}(\hat F_! D, D'_{G})
	\]
	By definition $\hat F^*({}_GD')$ is ${}_FD'_G$ and moreover one may check $\hat F_!D = D'_{F}$, for example by rewriting $\hat F_!$, which by definition is a left Kan extension, as a coend and computing it.  

The proof is completed by recalling from \cite{Toen06} that the functor category $R\uHom(D, D')$ is weakly equivalent to a full subcategory of (fibrant cofibrant) $D \otimes {D'}\op$-modules, and in particular the map $F \mapsto D'_F$ induces weak equivalences of hom spaces.
Thus we have 
\[
\Hom_{R\uHom(D, D')}(F, G) \simeq R\Hom_{D \otimes {D'}\op}(D'_{F}, D'_G) 
\simeq R\Hom_{D, D}(D, {}_FD'_G).
\]
This is nothing but a categorical version of the classical formula $\HH^*(A, B) \cong \operatorname{Ext}^*_{A \otimes B\op}(B, B)$ for algebras $A$ and $B$.
\end{rem}

We now specialise to $F=G$ and consider the bimodule $D' = {}_G D' _G$.

\begin{theorem}\label{thm:hhcomputation}
	Let $G: D \to D'$ be a functor of dg categories. 
	We then have $\HH^0(D,D')^\times \cong \pi_1(\Map(D,D'), G)$ and 
	$\HH^{i}(D, D') \cong \pi_{1-i} (\Map(D,D'), G)$ for $i < 0$.
\end{theorem}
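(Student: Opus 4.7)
The plan is to chain together Theorem \ref{thm:dgmapping}, Lemma \ref{lem:hhbar}, and a general formula for the homotopy groups of the core of a dg nerve at a point. First I would replace $\Map(D,D')$ by the core of $\Ndg R\uHom(D,D')$ via Theorem \ref{thm:dgmapping}, with the basepoint $G$ corresponding to the object of $R\uHom(D,D') \simeq \MCp{\Ba D}{D'}$ determined by the dg functor $G$ (thought of as a strict $A_\infty$-functor, i.e.\ an MC element in the convolution category $\{\overline{\Ba D}, D'\}$ concentrated in tensor-length one).

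Next I would invoke (or prove) the general fact that for any dg category $\cat C$ and object $X$, the core of $\Ndg \cat C$ has
\[
\pi_1(\mathrm{core}\,\Ndg\cat C, X) \cong \bigl(H^0\End_{\cat C}(X)\bigr)^\times, \qquad
\pi_n(\mathrm{core}\,\Ndg\cat C, X) \cong H^{1-n}\End_{\cat C}(X) \text{ for } n \geq 2.
\]
This follows from the standard description of $\Ndg$: the mapping space between two objects of $\cat C$ is weakly equivalent to the Dold--Kan realization of the non-positively truncated hom-complex, whose homotopy groups are $H^{-n}$ of that complex. Passing to the core restricts $\pi_0$ to the homotopy-invertible elements and shifts the indexing by one when one looks at $\pi_n$ based at $X$, since loops at $X$ correspond to automorphisms and higher automorphisms of $\id_X$. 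Alternatively, one can reduce to the simplicial setting using the equivalence $\Ndg \simeq \Ncoh \circ H$ from Remark \ref{rem:diagram}, where the analogous formula (cf.\ Dwyer--Kan) is classical.

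Finally, I would specialize Lemma \ref{lem:hhbar} to $F = G$ to identify the endomorphism complex $\End_{R\uHom(D,D')}(G)$ with the Hochschild complex $\HHH(D,{}_GD'_G) = \HHH(D,D')$. Combining this identification with the homotopy group formula above gives $\pi_1(\Map(D,D'),G) \cong \HH^0(D,D')^\times$ and $\pi_{1-i}(\Map(D,D'),G) \cong H^i\HHH(D,D') = \HH^i(D,D')$ for $i < 0$, as desired.

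The main obstacle is the middle step: the computation of $\pi_*$ of the core of $\Ndg \cat C$ at a point. The result itself is well-known, but justifying it cleanly in our setting requires either a direct analysis of the simplices of $\Ndg$ based at the object $G$ (comparing them to normalized cochains on simplices with values in $\End(G)$) or a careful comparison with the coherent nerve; the rest of the argument is a formal combination of results already established in the paper.
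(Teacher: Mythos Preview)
Your proposal is correct and follows essentially the same route as the paper's proof: invoke Theorem~\ref{thm:dgmapping}, identify the endomorphism complex via Lemma~\ref{lem:hhbar}, and read off the homotopy groups of the core of the dg nerve via the Dold--Kan description of mapping spaces. The paper handles your ``middle step'' by a direct citation of \cite[Remark~1.3.1.12]{Lurie11} rather than by the argument you sketch, but the content is the same.
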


\begin{proof}
By Theorem \ref{thm:dgmapping} we have $\Map(D, D')$ weakly equivalent to the core of $\Ndg R\uHom(D, D')$.
	
	It now follows, cf.\ \cite[Remark 1.3.1.12]{Lurie11},
	that the Dold-Kan image of $\Hom_{R \uHom(D,D')}(G,G)$ is the infinity categorical mapping space from $G$ to $G$ in $F\Ba \MCp {\Ba D} {D'}$.
Taking homotopy groups and using Lemma \ref{lem:hhbar} we have $\HH^{i}(D, D') = \pi_{-i}(\Omega\Map(D,D'), G)$ for $i > 1$ as the higher homotopy groups of mapping spaces are unaffected by taking the core.
Finally $\HH^0(D,G)$ is given by $\pi_0\Map_{F\Omega \MCp {BD} D'}(G,G)$ and taking units on both sides we obtain $\HH^0(D, D')^* \cong \pi_0\Map_{\Map(D,D')}(G,G) = \pi_1(\Map(D,D'), G)$.
\end{proof}

\begin{cor}
	[{\cite[Corollary 8.2]{Toen06}}]
	$\HH^i(D) \cong \pi_{1-i}(\Map(D, D), \id)$ for $i < 0$ and $\HH^0(D)^\times \cong \pi_1(\Map(D,D), \id_D)$.
\end{cor}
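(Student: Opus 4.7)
The plan is to obtain this corollary as an immediate specialization of Theorem \ref{thm:hhcomputation}. I set $D' = D$ and $G = \id_D$, so that the bimodule ${}_G D'_G$ becomes the diagonal bimodule ${}_{\id} D_{\id}$, which by Definition \ref{defi:hhd} computes the ordinary Hochschild cohomology $\HHH(D) \coloneqq \HHH(D, D)$. Under this identification, Lemma \ref{lem:hhbar} specializes to the classical isomorphism $\HHH(D) \cong \End_{R\uHom(D,D)}(\id_D)$ already noted after its proof.

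With these identifications in place, Theorem \ref{thm:hhcomputation} applied to $G = \id_D$ directly yields the two claimed isomorphisms: for $i < 0$ the isomorphism $\HH^i(D, D) \cong \pi_{1-i}(\Map(D,D), \id_D)$ becomes $\HH^i(D) \cong \pi_{1-i}(\Map(D,D), \id_D)$, and for $i = 0$ the unit statement $\HH^0(D, D)^\times \cong \pi_1(\Map(D,D), \id_D)$ becomes $\HH^0(D)^\times \cong \pi_1(\Map(D,D), \id_D)$.

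There is essentially no obstacle beyond bookkeeping: the substantive content is already contained in Theorem \ref{thm:dgmapping} (identifying the mapping space with the core of $\Ndg R\uHom(D,D)$), Corollary \ref{cor:dgcathom} (identifying the derived internal hom with $\MCp{\Ba D}{D}$), and Lemma \ref{lem:hhbar} (identifying hom spaces in $\MCp{\Ba D}{D}$ with Hochschild cochains). The only point worth emphasizing is the harmless passage between reduced and unreduced Hochschild complexes noted after Lemma \ref{lem:hhbar}, which ensures that the computation of $\End_{R\uHom(D,D)}(\id_D)$ really does recover $\HHH(D)$ as defined in Definition \ref{defi:hhd}.
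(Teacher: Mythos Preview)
Your proposal is correct and matches the paper's approach: the corollary is stated without proof in the paper, as it is an immediate specialization of Theorem~\ref{thm:hhcomputation} to $D'=D$ and $G=\id_D$, exactly as you describe.
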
 

\bibliography{./biblibrary2}

\end{document}